\newtheorem*{corollary*}{Corollary}
\newtheorem*{theorem*}{Theorem}
\newtheorem{theorem}{Theorem}[section]
\newtheorem{proposition}[theorem]{Proposition}
\newtheorem{lemma}[theorem]{Lemma}
\theoremstyle{definition}
\theoremstyle{remark}
\newtheorem*{remark}{Remark}
\numberwithin{equation}{section}
\newcommand{\declarecommand}[1]{\providecommand{#1}{}\renewcommand{#1}}
\declarecommand{\R}{\mathbb{R}}
\declarecommand{\Q}{\mathbb{Q}}
\declarecommand{\Z}{\mathbb{Z}}
\declarecommand{\N}{\mathbb{N}}
\declarecommand{\C}{\mathbb{C}}
\declarecommand{\emptyset}{\varnothing}
\declarecommand{\Re}{\mathrm{Re}}
\declarecommand{\Im}{\mathrm{Im}}
\declarecommand{\epsilon}{\varepsilon}
\declarecommand{\cM}{\mathcal{M}}
\declarecommand{\ds}{\displaystyle}
\newcommand{\dsum}{\mathop{\sum\sum}}
\newcommand{\qsum}{\mathop{\sum\sum\sum\sum}}
\newcommand{\msum}{\mathop{\sum\cdots\sum}}
\title{Moments of random multiplicative functions over function fields}
\author{Maximilian C. E. Hofmann}
\address{Maximilian C. E. Hofmann \\
Institut f\"ur Mathematik \\
Goethe--Universit\"at Frankfurt \\
60325 Frankfurt am Main \\
Germany}
\email{maximilian.hofmann@stonybrook.edu}
\author{Annemily Hoganson} 
\address{
Annemily Hoganson \\
Department of Mathematics and Statistics \\
Carleton College \\ 
Northfield, MN, 55057 \\
USA}
\email{annemilyhoganson@gmail.com}
\author{Siddarth Menon}
\address{
Siddarth Menon \\
Department of Mathematics \\
University of California Berkeley \\ 
Berkeley, CA, 94704 \\
USA}
\email{sidmenon@berkeley.edu}
\author{William Verreault}
\address{
William Verreault \\
Department of Mathematics \\
University of Toronto   \\ 
Toronto, ON, M5S 2E4 \\
Canada}
\email{william.verreault@utoronto.ca}
\author{Asif Zaman}
\address{
Asif Zaman \\
Department of Mathematics \\ 
University of Toronto   \\ 
Toronto, ON, M5S 2E4 \\
Canada}
\email{asif.zaman@utoronto.ca}
\date{\today}
\begin{document}

\begin{abstract}
Granville--Soundararajan, Harper--Nikeghbali--Radziwi\l\l, and Heap--Lindqvist independently established an asymptotic for the even natural moments of partial sums of random multiplicative functions defined over integers. Building on these works, we study the even natural moments of partial sums of Steinhaus random multiplicative functions defined over function fields. Using a combination of analytic arguments and combinatorial arguments, we obtain asymptotic expressions for all the  even natural moments in the large field limit and large degree limit, as well as an exact expression for the fourth moment. 
\end{abstract}

\maketitle


\section{Introduction}{\label{intro}}
\subsection{Background} Let $(f(p))_{p \text{ prime}}$ be a sequence of independent random variables which are uniformly distributed on the complex unit circle. Define  $f \colon \mathbb{N} \to \mathbb{C}$ by  $f(1) = 1$ and
\[
f(n) = f(p_1)^{\alpha_1} \cdots f(p_r)^{\alpha_r}, \qquad 
\]
where $n = p_1^{\alpha_1} \cdots p_r^{\alpha_r}$ for distinct primes $p_1,\dots,p_r$ and positive integers $\alpha_1,\dots,\alpha_r$. The function $f$ is a \textit{Steinhaus random multiplicative function} (over the integers $\mathbb{Z}$).  Random multiplicative functions were first introduced by Wintner \cite{Wintner-1944} in 1944 as a probabilistic model for the M\"{o}bius function in multiplicative number theory.  The  Steinhaus example is inspired by archimedean characters $n \mapsto n^{it}$ for randomly chosen large values of $t$, as well as by non-archimedean characters $n \mapsto \chi(n)$ for randomly chosen Dirichlet characters $\chi$ modulo $q$ for large prime moduli $q$ (see Section 2 of \cite{GranvilleSound} for a related discussion). 

For fixed $x \geq 2$, the random variable 
\begin{equation}
\sum_{n\leq x} f(n)
\label{eqn:SteinhausChaos-Integer}
\end{equation}
and related subsums have generated substantial interest over the past decade, e.g.\ \cite{Hough-2011}, \cite{ChatterjeeSound-2012}, \cite{Harper-2013}, \cite{Lau-Tenenbaum-Wu-2012}, \cite{HarperNikeghbaliRadziwi-2015}, \cite{BondarenkoSeip-2016}, \cite{HeapLindqvist-2016},  \cite{Harper-2019a}, \cite{Harper-2020}, \cite{Mastrostefano-2022}, \cite{Caich-2023}, \cite{Hardy-2024a}, \cite{Harper-2023},  \cite{Klurman-Shkredov-Xu-2023}, \cite{SoundXu-2023}, \cite{GorodetskyWong-2024b}, \cite{GorodetskyWong-2024a}, \cite{Hardy-2024b}, \cite{Pandey-Wang-Xu-2024}, and \cite{Xu-2024}. While  the distribution of \eqref{eqn:SteinhausChaos-Integer} remains open (see Gorodetsky--Wong \cite{GorodetskyWong-2024b} for a recent conjecture), there has been significant progress to describe its moments. Granville--Soundararajan \cite{GranvilleSound} first gave upper and lower bounds for even natural moments of  \eqref{eqn:SteinhausChaos-Integer}. About 15 years later, Granville--Soundararajan (unpublished), Harper--Nikeghbali--Radziwi\l\l\, \cite{HarperNikeghbaliRadziwi-2015}, and Heap--Lindqvist \cite{HeapLindqvist-2016} independently proved that for $k \in \N$ and $x \geq 3$,
\begin{equation}\label{eqn:Moments-Integers}
\mathbb{E} \Big| \sum_{n\leq x} f(n)\Big|^{2k} = \binom{2k-2}{k-1} c_k \cdot k^{-(k-1)}\mathrm{vol}(\mathcal{B}_k) x^k (\log x)^{(k-1)^2} \Big(1 + O_k\Big(\frac{1}{\log\log x}\Big)\Big),
\end{equation}
where $\text{vol}(\mathcal{B}_k)$ is the volume of the $k$th Birkhoff polytope, and 
\begin{equation} \label{eqn:ArithmeticConstant-Integers}
c_k = \prod_p \Big(1-\frac{1}{p} \Big)^{k^2} \Big( \sum_{m=0}^{\infty} {m+k-1 \choose k-1}^2 \frac{1}{p^m} \Big). 
\end{equation}
About 10 years prior, Conrey--Gamburd \cite{ConreyGamburd-2006} established essentially the same estimate as \eqref{eqn:Moments-Integers} in the equivalent context of integral moments of partial sums of the Riemann $\zeta$-function along the critical line. Morever, the special case $k=2$ was studied earlier by Ayyad--Cochrane--Zheng \cite{AyyadCochraneZheng-1996} and improved by Shi \cite{Shi-2008} who showed that
\begin{equation}
    \label{eqn:integers_k=2}
    \mathbb{E}\Big| \sum_{n \leq x} f(n) \Big|^4 = \frac{12}{\pi^2} x^2 \log x + c x^2 + O_{\epsilon}(x^{547/416+\epsilon})
\end{equation}
for $\epsilon > 0$, where $c = 0.511\dots$ is an explicit positive constant.   Recently, Harper \cite{Harper-2019a,Harper-2020} determined the order of magnitude of $\mathbb{E}|\sum_{n \leq x} f(n)|^{2k}$  for all real numbers $k \geq 0$.

Our interest lies with random multiplicative functions in the \textit{function field setting}, namely the polynomial ring $\mathbb{F}_q[t]$ where $q \geq 2$ is a prime power and $\mathbb{F}_q$ is the finite field with $q$ elements. For an introduction to multiplicative functions over $\mathbb{F}_q[t]$, see Granville--Harper--Soundararajan \cite{GranvilleHarperSoundararajan-2015} for example. Before presenting our results, we define some notation. Let $\mathcal{M}$ denote the set of all monic polynomials in $\mathbb{F}_q[t]$. Let $\mathcal{M}_N \subseteq \mathcal{M}$ denote the set of monic polynomials of degree exactly $N$, so $|\mathcal{M}_N|=q^N$. Let $(f(P))_{P}$ be a sequence of independent random variables indexed by the monic irreducible polynomials $P$ of $\mathbb{F}_q[t]$ and uniformly distributed on the complex unit circle. Define $f \colon \mathcal{M} \to \mathbb{C}$ by $f(1) = 1$ and 
\[
f(F) = f(P_1)^{\alpha_1} \cdots f(P_r)^{\alpha_r}
\]
for $F \in \mathcal{M}$, where $F = P_1^{\alpha_1} \cdots P_r^{\alpha_r}$ for distinct monic irreducible polynomials $P_1,\dots,P_r$ and positive integers $\alpha_1,\dots,\alpha_r$. The function $f$ is a \textit{Steinhaus random multiplicative function} over the ring $\mathbb{F}_q[t]$. For integers $N \geq 1$, the random variable
\begin{equation}
\sum_{F \in \mathcal{M}_N}   f(F)
\label{eqn:SteinhausChaos}
\end{equation}
is the function field analogue of \eqref{eqn:SteinhausChaos-Integer}.   Building on work of Harper \cite{Harper-2013} over the integers, Aggarwal--Subedi--Verreault--Zaman--Zheng \cite{AggarwalSubediVerreaultZamanZheng-2022a} established a central limit theorem for a variant of \eqref{eqn:SteinhausChaos} with few irreducible factors. Otherwise, much less is known about \eqref{eqn:SteinhausChaos}. 

\subsection{Results} The purpose of this article is to obtain asymptotics for the even natural moments of \eqref{eqn:SteinhausChaos}. Our main result establishes such an estimate as $q^N \to \infty$. 

\begin{theorem}\label{thm:steinhaus}
    Fix $k\in \N$. For any prime power $q \geq 2$ and any integer $N \geq 1$, if $f$ is a Steinhaus random multiplicative function defined over $\mathbb{F}_q[t]$, then  
    \begin{equation} \label{eqn:Moments}
        \mathbb{E}\Big|\sum_{F\in \cM_N} f(F)\Big|^{2k} = b_k(q) \mathcal{S}_k(N) q^{kN} 
    \left(
    1 + O_k\left(\frac{(\log N)^{k}}{q N^{\frac{1}{2} - \frac{3k-2}{4k^2-2k}} }  
    \right) \right),
    \end{equation}
    where $\mathcal{S}_k(N)$ is the number of $k \times k$ magic squares with magic constant $N$, and 
    \begin{equation} \label{eqn:ArithmeticConstant}
        b_k(q) = \prod_{\substack{P \in \cM\\ \text{irred.}}} \Big( 1-\frac{1}{q^{\deg P}}\Big)^{k^2} \Big( \sum_{m=0}^\infty \binom{m+k-1}{k-1}^2 \frac{1}{q^{m \deg P}}  \Big).
    \end{equation}
\end{theorem}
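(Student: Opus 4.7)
The plan is to derive an exact combinatorial identity for the moment via an Euler-product generating function, factor out the zeta-type pole, and compare transportation polytope counts to magic square counts.

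\textbf{Reduction and Euler product.} Using multiplicativity together with the Steinhaus orthogonality $\mathbb{E}[f(P)^a\overline{f(P)^b}] = \mathbf{1}[a=b]$ applied at each irreducible $P$, the moment reduces first to a diagonal count
\[ \mathbb{E}\Big|\sum_{F \in \cM_N} f(F)\Big|^{2k} = \#\{(F_1, \dots, F_k, G_1, \dots, G_k) \in \cM_N^{2k} : F_1\cdots F_k = G_1\cdots G_k\}.\]
This is the coefficient of $\prod_i u_i^N \prod_j v_j^N$ in the $2k$-variable generating function $Z(\mathbf{u}, \mathbf{v}) = \prod_P \phi_P(\mathbf{u}, \mathbf{v})$, where the local factor is
\[\phi_P(\mathbf{u}, \mathbf{v}) = \sum_{\alpha, \beta \in \N^k : |\alpha| = |\beta|} \prod_i u_i^{\alpha_i \deg P} \prod_j v_j^{\beta_j \deg P}.\]

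\textbf{Pole extraction and exact formula.} Using the function-field identity $\prod_P (1-x^{\deg P})^{-1} = (1-qx)^{-1}$, I would factor out the zeta-type singularity as
\[Z(\mathbf{u}, \mathbf{v}) = \prod_{i,j=1}^k \frac{1}{1-qu_iv_j} \cdot E(\mathbf{u}, \mathbf{v}), \qquad E_P = \phi_P \prod_{i,j}(1-(u_iv_j)^{\deg P}), \quad E = \prod_P E_P.\]
A quick expansion shows the total-degree-$2\deg P$ terms of $\phi_P$ and $\prod_{i,j}(1-(u_iv_j)^{\deg P})$ are exact negatives, so they cancel in $E_P$, leaving $E_P = 1 + O_k(r^{4\deg P})$ on $\max(|u_i|, |v_j|) \leq r$. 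Hence $E$ converges absolutely whenever $r < q^{-1/4}$, which strictly contains the saddle radius $r = q^{-1/2}$. A direct substitution gives $E(q^{-1/2}, \dots, q^{-1/2}) = b_k(q)$. Expanding $\prod_{i,j}(1-qu_iv_j)^{-1} = \sum_{M \in \N^{k \times k}} q^{|M|} \prod_{i,j} (u_iv_j)^{M_{ij}}$ and $E(\mathbf{u}, \mathbf{v}) = \sum_{|\alpha|=|\beta|} E_{\alpha, \beta} \prod_i u_i^{\alpha_i} \prod_j v_j^{\beta_j}$, coefficient extraction yields the exact identity
\[\mathbb{E}\Big|\sum_{F \in \cM_N} f(F)\Big|^{2k} = q^{kN} \sum_{\substack{\alpha, \beta \in \N^k \\ |\alpha|=|\beta|}} q^{-|\alpha|} E_{\alpha, \beta} \, T_k(\alpha, \beta; N),\]
where $T_k(\alpha, \beta; N)$ counts $k \times k$ non-negative integer matrices with row sums $N - \alpha_i$ and column sums $N - \beta_j$, so $T_k(0, 0; N) = \mathcal{S}_k(N)$.

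\textbf{Main term and error bound.} Replacing $T_k(\alpha, \beta; N)$ by $\mathcal{S}_k(N)$ and using $E(q^{-1/2}, \dots) = b_k(q)$ produces the main term $q^{kN} b_k(q) \mathcal{S}_k(N)$ of \eqref{eqn:Moments}. To control the remainder I would combine (i) Cauchy-type estimates $|E_{\alpha, \beta}| \ll_k r^{-|\alpha| - |\beta|}$ for any $r < q^{-1/4}$, giving rapid decay against the weight $q^{-|\alpha|}$; and (ii) an Ehrhart-theoretic comparison $|T_k(\alpha, \beta; N) - \mathcal{S}_k(N)| \ll_k (|\alpha| + |\beta|) N^{(k-1)^2 - 1}$ for modest perturbations, exploiting that both counts are quasi-polynomials in $N$ of degree $(k-1)^2$ sharing the leading coefficient $\operatorname{vol}(\mathcal{B}_k)/((k-1)^2)!$.

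\textbf{Main obstacle.} The hardest step will be producing the precise error rate $O_k((\log N)^k / (q N^{1/2 - (3k-2)/(4k^2 - 2k)}))$. The $1/q$ savings reflect that $E - 1$ begins at order $q^{-2}$ per local factor (and $q^{-1}$ globally after summing over degree-$1$ primes), while the specific exponent $1/2 - (3k-2)/(4k^2 - 2k)$ should emerge by optimally truncating the $\alpha, \beta$ sum at a cutoff depending on $\log N$ and $\log q$, balancing the Ehrhart error for small $|\alpha| + |\beta|$ against the tail contribution from the Cauchy-type bound for large $|\alpha| + |\beta|$. Controlling the transportation-polytope deviations uniformly in the perturbation parameters, and tracking the logarithmic factor from the optimal cutoff, is where the bulk of the technical work will lie.
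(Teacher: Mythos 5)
Your proposal takes a genuinely different route from the paper for $k\geq 3$. Both start from the same reduction, the same $2k$-variable Euler product, and the same factorization extracting $\prod_{i,j}(1-qu_iv_j)^{-1}$. For $k=2$ the paper also extracts coefficients directly (Section~\ref{sec:analytic-4}), so there you coincide. But for general $k$ the paper keeps the coefficient extraction as a multivariate Cauchy integral, makes analytic changes of variables, and estimates an oscillatory integral by subdividing the domain (Sections~\ref{sec:analytic-integral-repn}--\ref{sec:analytic-error-integrals}). You instead expand everything as a formal power series and land on the exact combinatorial identity
$$
\mathbb{E}\Big|\sum_{F\in\cM_N} f(F)\Big|^{2k} = q^{kN}\sum_{|\alpha|=|\beta|} q^{-|\alpha|}E_{\alpha,\beta}\,T_k(\alpha,\beta;N),
$$
which is correct, elegant, and avoids the paper's delicate integral analysis. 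Your preliminary steps (cancellation of the total-degree-$2\deg P$ terms in $E_P$, convergence of $E$ for $r<q^{-1/4}$, identification $E(q^{-1/2},\ldots)=b_k(q)$) all check out and mirror the content of \cref{prop:convergence_of_B}.

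The genuine gap is the Ehrhart-theoretic comparison
$$
|T_k(\alpha,\beta;N)-\mathcal{S}_k(N)|\ll_k (|\alpha|+|\beta|)\,N^{(k-1)^2-1},
$$
which you assert but do not prove, and which must hold with an implied constant depending only on $k$, uniformly over the relevant range of $(\alpha,\beta)$. The enumerator $T_k(\alpha,\beta;N)$ is a \emph{piecewise} polynomial in the margins (a vector partition function), and as $(\alpha,\beta)$ vary you cross chamber walls; establishing the Lipschitz-type bound above across chambers, with the right uniformity and including the tail where $|\alpha|$ is comparable to $N$ so that $T_k$ degenerates, is a real technical lemma that you have not supplied. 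This is precisely what the paper sidesteps by staying in the integral representation and never invoking transportation-polytope enumeration. (You also understate the asymmetry: your near-sum analysis, \emph{if} the Ehrhart lemma holds, would already yield a relative error of order $1/(qN)$, which matches the exact $k=2$ computation in \cref{thm:k=2}; so ``optimally truncating to recover the specific exponent $\tfrac12-\tfrac{3k-2}{4k^2-2k}$'' is a misdiagnosis --- the exponent in the paper reflects inefficiencies in the contour method, not an intrinsic barrier, and your route would give a \emph{stronger} bound rather than that one.) Until the uniform transportation-polytope estimate is proved, the proposal does not close.
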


\begin{remark}
    A $k \times k$ magic square with magic constant $N$ is a $k \times k$ matrix with nonnegative integer entries where every row sum and column sum is equal to $N$. For fixed $k$, it is a consequence of Ehrhart's theorem \cite{Ehrhart-1977} that as $N \to \infty$,
    \begin{equation}
    \mathcal{S}_k(N) = k^{-(k-1)}\text{vol}(\mathcal{B}_k)N^{(k-1)^2} (1+o(1)). 
    \label{eqn:MagicSquaresAsymptotic}
    \end{equation}
\end{remark}
The proof of \cref{thm:steinhaus} in \cref{sec:analytic-general} mostly follows the analytic approach of \cite{HarperNikeghbaliRadziwi-2015} and especially \cite{HeapLindqvist-2016} with some novelties, so we shall provide a brief comparison with \eqref{eqn:Moments-Integers}. First, the asymptotic of \cref{thm:steinhaus} over $\mathbb{F}_q[t]$ and the asymptotic \eqref{eqn:Moments-Integers} over $\mathbb{Z}$ possess similar main terms. Recall the number of integers $n\leq x$ is equal to  $\lfloor x\rfloor$, and the number of polynomials $F \in \mathcal{M}_N$ is equal to $q^N$. By replacing $x$ in \eqref{eqn:Moments-Integers} by $q^N$ and utilizing \eqref{eqn:MagicSquaresAsymptotic}, one recovers the same growth rate. Moreover, the arithmetic constants \eqref{eqn:ArithmeticConstant-Integers} and \eqref{eqn:ArithmeticConstant} have the same structure.

Second, the error term of \cref{thm:steinhaus} over $\mathbb{F}_q[t]$ is comparably stronger than \eqref{eqn:Moments-Integers} over $\mathbb{Z}$. Replacing $x$ with $q^N$ again, one might likewise expect the $\log \log x$ factor in \eqref{eqn:Moments-Integers} to become a factor of $\log N$  in the function field setting, but our error term decays by a power of $N$. These gains are primarily due to a more delicate technical analysis of various oscillatory integrals, and might lead to similar gains in the integer setting.  

Third, \cref{thm:steinhaus} has complete uniformity in both the $q$-limit and $N$-limit. Our adaptation of the analytic approach due to \cite{HarperNikeghbaliRadziwi-2015,HeapLindqvist-2016} handles the case $q^N \to \infty$ provided $N \geq 20$. With more effort, a modified analytic approach might allow for all values of $N$. Instead, for $N \geq 1$ fixed and $q \to \infty$, we applied a combinatorial approach to prove this special case of \cref{thm:steinhaus}. We state this theorem separately for the sake of clarity. 

\begin{restatable}{theorem}{qlimit thm}\label{thm:q-limit}
    Fix $k, N\in \N$. For any prime power $q \geq 2$, if $f$ is a Steinhaus random multiplicative function defined over $\mathbb{F}_q[t]$, then   $$\mathbb{E}\Big|\sum_{F\in \cM_N} f(F)\Big|^{2k} = \mathcal{S}_k(N) q^{kN}\Big(1 + O_{k,N}\Big(\frac{1}{q}\Big) \Big).$$
\end{restatable}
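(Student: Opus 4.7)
My plan is to interpret the $2k$-th moment as a combinatorial count and show, via a bijection with $k \times k$ magic squares of magic constant $N$, that the dominant contribution comes from tuples with squarefree common product.

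Expanding the modulus and using that $\mathbb{E}[f(P)^a \overline{f(P)^b}] = 0$ unless $a = b$ for the Steinhaus variables $f(P)$, together with independence across irreducibles, immediately gives
\[
M_k(N) := \mathbb{E}\Big|\sum_{F\in \cM_N} f(F)\Big|^{2k} = \#\big\{(F_1,\ldots,F_k,G_1,\ldots,G_k) \in \cM_N^{2k} : F_1\cdots F_k = G_1\cdots G_k\big\}.
\]
I split this count according to whether the common product $H := F_1 \cdots F_k$ is squarefree, and handle the two pieces separately.

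For the squarefree part, I establish a bijection between such tuples and pairs $(M, (H_{ij}))$, where $M = (m_{ij})$ is a $k \times k$ magic square with magic constant $N$ and $(H_{ij})_{1 \leq i, j \leq k}$ is a family of pairwise coprime squarefree monic polynomials with $\deg H_{ij} = m_{ij}$. The forward map is $(F, G) \mapsto (\gcd(F_i, G_j))_{i,j}$: squarefreeness of $H$ forces each irreducible factor of $H$ to lie in exactly one $F_i$ and in exactly one $G_j$, so the $H_{ij}$ are pairwise coprime and squarefree with $F_i = \prod_j H_{ij}$ and $G_j = \prod_i H_{ij}$, forcing $\sum_j m_{ij} = \sum_i m_{ij} = N$. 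The inverse map uses the same product formulas. For each fixed $M$, the number of squarefree monics in $\cM_m$ is $q^m (1 + O(1/q))$ for $m \geq 1$, so ignoring coprimality gives $q^{kN} (1 + O_{k,N}(1/q))$ candidate families. Pairwise coprimality costs only a further $O_{k,N}(1/q)$ relative error: by a union bound over the $\binom{k^2}{2}$ pairs of cells, this reduces to bounding, per pair, tuples sharing a common irreducible factor, which the Prime Polynomial Theorem $\#\{P \in \cM_d \text{ irreducible}\} = q^d/d + O(q^{d/2})$ controls (with the dominant contribution coming from shared linear factors). Summing over the $\mathcal{S}_k(N)$ magic squares produces the main term $\mathcal{S}_k(N) q^{kN}(1 + O_{k,N}(1/q))$.

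For the non-squarefree part, I note that there are $O(q^{kN-1})$ non-squarefree polynomials in $\cM_{kN}$, and for any $H \in \cM_{kN}$ the number of ordered factorizations of $H$ into $k$ monic polynomials is bounded by a constant $C_{k,N}$, since it is dominated by the $k$-th divisor function and $\deg H = kN$ is fixed. Squaring and summing over non-squarefree $H$ yields $O_{k,N}(q^{kN-1})$, which is absorbed into the stated error. The main obstacle is the bookkeeping: carefully combining the several sources of $O(1/q)$ relative error (the irreducible-count asymptotic, within-cell squarefreeness, cross-cell coprimality, and non-squarefree $H$) into a single uniform estimate.
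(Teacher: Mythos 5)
Your proof is correct, but it takes a genuinely different route from the paper's. The paper applies a recursive GCD matrix construction to \emph{every} solution tuple, squarefree or not: for $(F_1,\dots,F_k,G_1,\dots,G_k)$ with $F_1\cdots F_k = G_1\cdots G_k$ it produces a $k\times k$ matrix $(A_{ij})$ of monics with $F_i = \prod_j A_{ij}$ and $G_j = \prod_i A_{ij}$, so that the degree matrix is automatically a magic square $S$ of magic constant $N$. It then sandwiches the number $R(S)$ of tuples with GCD degree matrix $S$: trivially $R(S) \le q^{kN}$ (injectivity of the construction), while $R(S)$ is at least the number of pairwise-coprime matrices with prescribed degrees, which a cited lemma of Lieb evaluates as $q^{kN}(1 + O_{k,N}(1/q))$; summing over $S$ finishes the proof with no case split. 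Your route instead splits on squarefreeness of the common product $H$. In the squarefree regime your bijection with pairs $\bigl(M, (H_{ij})\bigr)$ via the simple map $H_{ij} = \gcd(F_i, G_j)$ is cleaner than the paper's recursion, because squarefreeness localizes each irreducible factor to a unique cell; you then estimate that count self-containedly from the squarefree density $1 - 1/q$ and a union bound over pairs of cells using the Prime Polynomial Theorem, in place of invoking Lieb's lemma as a black box. The non-squarefree tuples you dispose of with the crude bound $O_{k,N}(q^{kN-1})$, which is justified since the $k$-fold divisor function is uniformly $O_{k,N}(1)$ on $\cM_{kN}$. What you gain is a more elementary and self-contained argument with a transparent bijection; what the paper's construction gains is uniformity over all tuples, so that each $R(S)$ is bounded directly and no two-regime accounting is needed. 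Both land at the same $O_{k,N}(1/q)$ error.
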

\begin{remark}
   The main term matches \eqref{eqn:Moments} since $b_k(q) = 1 + O_k(q^{-1})$.
\end{remark}

The proof of \cref{thm:q-limit} in \cref{sec:combinatorial-general} adapts the GCD matrix construction first introduced by Vaughan--Wooley \cite{VaughanWooley-1995} and used by Granville--Soundararajan \cite{GranvilleSound} for moments in the integer setting. Gorodetsky \cite[Section 5.1]{Gorodetsky-2024} outlined this strategy to establish an asymptotic for moments in the function field setting as $q \to \infty$. Our proof follows his outline. 

By specializing the analytic (\cref{thm:steinhaus}) and combinatorial (\cref{thm:q-limit}) approaches to the case $k=2$, we provide two proofs of an exact formula for the fourth moment. 
\begin{restatable}{theorem}{kistwo thm}\label{thm:k=2}
    For any prime power $q \geq 2$ and integer $N \geq 1$, if $f$ is a Steinhaus random multiplicative function defined over $\mathbb{F}_q[t]$, then 
    $$\mathbb{E}\Big|\sum_{F\in \cM_N}f(F)\Big|^4 = Nq^{2N}\Big(1-\frac{1}{q}\Big)+q^{2N}.$$
\end{restatable}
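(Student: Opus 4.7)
The plan is to reduce the fourth moment directly to a counting problem. Expanding,
$$\mathbb{E}\Big|\sum_{F \in \cM_N} f(F)\Big|^4 = \sum_{F_1,F_2,F_3,F_4 \in \cM_N} \mathbb{E}\big[f(F_1) f(F_2) \overline{f(F_3)}\,\overline{f(F_4)}\big].$$
Since $f$ is Steinhaus and multiplicative, and $\mathbb{E}[f(P)^m] = \mathbbm{1}\{m=0\}$ for each irreducible $P$, the summand vanishes unless $F_1 F_2 = F_3 F_4$, in which case it equals $1$. Therefore the fourth moment equals the number of quadruples $(F_1, F_2, F_3, F_4) \in \cM_N^4$ with $F_1 F_2 = F_3 F_4$ in $\mathbb{F}_q[t]$.

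Next I would reduce this to a GCD sum. Fix $F_1, F_3$ and write $D = \gcd(F_1, F_3)$, $F_1 = Da$, $F_3 = Db$ with $\gcd(a,b) = 1$. The equation $F_1 F_2 = F_3 F_4$ becomes $aF_2 = bF_4$, which by coprimality of $a, b$ forces $F_2 = bC$ and $F_4 = aC$ for some monic $C$. The constraint $\deg F_2 = N$ gives $\deg C = \deg D$, so there are exactly $q^{\deg D}$ valid pairs $(F_2, F_4)$. Summing over $F_1, F_3$,
$$\mathbb{E}\Big|\sum_{F \in \cM_N} f(F)\Big|^4 = \sum_{F_1, F_3 \in \cM_N} q^{\deg \gcd(F_1, F_3)}.$$

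To evaluate this, I would group by $D = \gcd(F_1, F_3)$. Set $c_m(q) := \#\{(E_1, E_2) \in \cM_m^2 : \gcd(E_1, E_2) = 1\}$. A short generating-function computation using $\sum_{D \in \cM} \mu(D) x^{\deg D} = 1 - qx$ yields
$$\sum_{m \geq 0} c_m(q) x^m = \frac{1-qx}{1-q^2 x},$$
so $c_0(q) = 1$ and $c_m(q) = q^{2m}(1 - 1/q)$ for $m \geq 1$. Since there are $q^k$ monic $D$ of degree $k$, combining these gives
$$\sum_{F_1, F_3 \in \cM_N} q^{\deg \gcd(F_1, F_3)} = \sum_{k=0}^{N} q^{2k}\, c_{N-k}(q) = q^{2N} + N q^{2N}\Big(1 - \frac{1}{q}\Big),$$
which matches the claim.

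I do not anticipate a serious obstacle: this is the GCD-matrix approach of Vaughan--Wooley specialized to the function field setting at $k=2$, and every step is exact. The one detail requiring care is isolating the $k = N$ boundary term, where $c_0(q) = 1$ rather than following the $q^{2m}(1 - 1/q)$ pattern; this is exactly what produces the additive $q^{2N}$ alongside the main $N q^{2N}(1 - 1/q)$ term.
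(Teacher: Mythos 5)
Your proposal is correct and follows essentially the same combinatorial route as the paper's Section~3 proof: reduce to counting quadruples $(F_1,F_2,F_3,F_4)\in\mathcal{M}_N^4$ with $F_1F_2=F_3F_4$, parameterize the solutions by splitting off a GCD together with a coprime pair and a free monic factor, and then sum the coprime-pair counts $q^{2m}(1-1/q)$ (for $m\ge 1$, with $1$ at $m=0$) over all degrees. The only minor variations are cosmetic: you split via $\gcd(F_1,F_3)$ rather than the cross-GCD $(F_1,F_4)$ used in the paper, and you rederive the coprime-pair count from the generating function $(1-qx)/(1-q^2x)$ instead of citing Corteel's lemma.
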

These two short proofs appear in \cref{sec:combinatorial-4,sec:analytic-4}. In addition to establishing \cref{thm:k=2}, they also have the advantage of outlining the key steps leading to \cref{thm:steinhaus,thm:q-limit} in a simple case. This exact formula in the function field setting can be compared with the asymptotic \eqref{eqn:integers_k=2} in the integer setting. 

\subsection{Organization} 
\cref{sec:Preliminaries} fixes notation, prepares the starting point for all of our main proofs, and includes some discussion of  our methods compared to the proof of \eqref{eqn:Moments-Integers}. \cref{sec:combinatorial-4,sec:analytic-4} respectively contain the combinatorial proof and the analytic proof of the 4th moment formula in \cref{thm:k=2}. These expositions also serve as simplified introductions for the general case with each approach. \cref{sec:combinatorial-general} generalizes the combinatorial approach to all $2k$th moments in the $q$-limit and proves \cref{thm:q-limit}. 

The rest of the paper (\cref{sec:analytic-general,sec:analytic-B-convergence,sec:analytic-integral-repn,sec:analytic-magic-square,sec:analytic-error-integrals}) is dedicated to the proof of \cref{thm:steinhaus} for all $2k$th moments via an analytic approach. \cref{sec:analytic-general} proves \cref{thm:steinhaus} assuming several key propositions and proceeding with the strategy of \cref{sec:analytic-4} for $k=2$.  The key propositions are postponed to the subsequent sections. \cref{sec:analytic-B-convergence} establishes analytic theory for the relevant multivariable generating function and its convergence (\cref{prop:convergence_of_B}). \cref{sec:analytic-integral-repn} rewrites the $2k$th moment as an integral in terms of this generating function and performs a convenient change of variables (\cref{prop:integral_representation_of_2kth_moment}). \cref{sec:analytic-magic-square} extracts the main contribution from this integral and expresses its value in terms of the number of magic squares (\cref{prop:magic_square_integral}). \cref{sec:analytic-error-integrals} handles the remaining error term integral by carefully subdividing the region of integration to control the fluctuations in the integrand (\cref{prop:error_term_estimation}). The bulk of the contribution comes from a high peak on a narrow region, so this analysis is fairly delicate.

\section{Preliminaries}
\label{sec:Preliminaries}

\subsection{Notation} For the sake of clarity, we fix some standard notation and conventions. This repeats some discussion from the introduction. Let $q \geq 2$ be a prime power, and let $\mathbb{F}_q[t]$ be the polynomial ring over the finite field $\mathbb{F}_q$ with $q$ elements. Let $\mathcal{M}$ denote the set of all monic polynomials in $\mathbb{F}_q[t]$ and let $\mathcal{M}_N \subseteq \mathcal{M}$ be the subset of degree $N$ polynomials, so $|\mathcal{M}_N| = q^N$.   The letters $F$ and $G$  will always denote monic  polynomials, and the letters $P$ and $Q$ will always denote monic irreducible polynomials.  The zeta function $\zeta_q$ for function fields is defined as
\begin{align*}
    \zeta_q(z) &= \sum_{N=0}^{\infty} |\mathcal{M}_N| z^N = \prod_{P}\left(1-z^{\deg P}\right)^{-1} = \frac{1}{1-qz},
\end{align*} 
where products over $P$ will always run over all irreducible monic polynomials.  

 A magic square of size $k$ with magic constant $N$ is a $k\times k$ matrix whose entries are nonnegative integers such that the entries of each row and column sum to $N$. The number of such matrices will be denoted by $\mathcal{S}_k(N)$. 

For variables $a$ and $b$, we write $a \ll b$ or $a = O(b)$ to say that there exists an absolute positive constant $C$ such that $|a| \leq C b$. If the constant $C$ depends on a parameter, say $k$, we shall write $a \ll_k b$ or $a = O_k(b)$. If $a$ and $b$ depend on a positive parameter $x$, then we say that $a = o(b)$ as $x \to \infty$  if the ratio $\frac{a}{b}$ converges to $0$ as $x \to \infty$.

\subsection{Setup} For the entirety of the paper, let $f \colon \mathcal{M} \to \mathbb{C}$ be a Steinhaus random multiplicative function over $\mathbb{F}_q[t]$. Our goal is to calculate the $2k$th moments of \eqref{eqn:SteinhausChaos} for integers $k$. All of our proofs begin by reducing to a natural counting problem. By complete multiplicativity and independence of $f$ on distinct irreducibles $P$, it follows for any $F,G \in \mathcal{M}$ that 
\[
\mathbb{E}[f(F)\overline{f(G)}] = \mathbbm{1}_{\{F = G\}},
\]
where $\mathbbm{1}_{\mathcal{A}}$ denotes the indicator function of a set $\mathcal{A}$. For the case $k=1$, this implies that 
$$
\mathbb{E}\Big| \sum_{F\in \cM_N} f(F)\Big|^{2} = \dsum_{F, G\in \cM_N}\mathbb{E}[f(F)\overline{f(G)}] = \dsum_{F, G\in \cM_N}\mathbbm{1}_{\{F=G\}}  = |\cM_N|=q^N.
$$
Similarly, the $2k$th moments are given by
$$
\msum_{F_1, \dots, F_{2k}\in \cM_N} \mathbb{E}[f(F_1)\dots f(F_k) \overline{f(F_{k+1})\dots f(F_{2k})}] = \msum_{F_1, \dots, F_{2k}\in \cM_N} \mathbbm{1}_{\{F_1\dots F_k = F_{k+1}\dots F_{2k}\}}.
$$
leading to the key initial identity: 
\begin{equation} \label{eqn:setup}
\mathbb{E}\Big| \sum_{F \in \mathcal{M}_N} f(F) \Big|^{2k} = \#\Big\{(F_1, \dots, F_{2k})\in \mathcal{M}_N^{2k}\ \Big|\ F_1F_2\cdots F_k = F_{k+1}F_{k+2}\cdots F_{2k}\Big\}. 
\end{equation}


\subsection{Remarks on the integer setting}
The moment problem over the integers can be reduced to a counting problem similar to \eqref{eqn:setup}. More precisely, the left hand side of \eqref{eqn:Moments-Integers} is equal to
\begin{equation} \label{eqn:setup-integers}
\# \Big\{ (n_1,\dots,n_{2k}) \in \{1,2,\dots,\lfloor x \rfloor \}^{2k} \ \Big|\  n_1\cdots n_k = n_{k+1} \cdots n_{2k} \Big\}. 
\end{equation}
Estimating this quantity takes several approaches in the literature.  Granville--Soundararajan \cite{GranvilleSound} give upper and lower bounds for \eqref{eqn:setup-integers} via their GCD matrix construction. Our proof of \cref{thm:q-limit} applies this method over $\mathbb{F}_q[t]$ to produce an asymptotic in the $q$-limit; see also \cref{sec:combinatorial-4} for the case $k=2$. 

Harper--Nikeghbali--Radziwi\l\l \, \cite{HarperNikeghbaliRadziwi-2015} and Heap--Lindqvist \cite{HeapLindqvist-2016} count \eqref{eqn:setup-integers} using the analytic theory of multivariable Dirichlet series. They enforce the condition $1 \leq n_i \leq N$ with a suitably smooth ramp function, which is written as $2k$ line integrals via Mellin inversion. This essentially rewrites \eqref{eqn:setup-integers} as an iterated integral over the multivariable Dirichlet series
\begin{equation} \label{eqn:GeneratingFunction-integers}
\msum_{\substack{n_1, n_2, \dots, n_{2k} \\ n_1 \cdots n_k = n_{k+1} \cdots n_{2k}}} \frac{1}{n_1^{s_1} \cdots n_{2k}^{s_{2k}}}.
\end{equation}
By expressing this Dirichlet series as a product of Riemann zeta functions and an Euler product with an expanded region of absolute convergence, they can extract the integral's asymptotic size. Roughly speaking, after a change of variables of the form $s_j \mapsto \frac{1}{2}+it_j$, they show that the main contribution is concentrated when $t_j \in \R$ is very small for many $j$.  This truncation requires a careful case analysis by subdviding the remaining large region of integration, where the complicated integrand decays rapidly as $t_j$ grows for some fixed $j$. 

Our proof of \cref{thm:steinhaus} in \cref{sec:analytic-general}  adapts this strategy to the function field setting.  The multivariable Dirichlet series \eqref{eqn:GeneratingFunction-integers} over the integers corresponds to a multivariable power series  \eqref{eqn:GeneratingFunction} over $\mathbb{F}_q[t]$. As established in \cref{sec:analytic-B-convergence}, this power series will also factor as a product of zeta functions and another series with expanded radius of convergence. By construction, the quantity \eqref{eqn:setup} is a specific coefficient of our multivariable power series, expressed as an iterated integral via Cauchy's integral formula. The  case $k=2$ for \cref{thm:k=2} yields a sufficiently simple series factorization and integral expression, which we compute directly in \cref{sec:analytic-4}. For $k \geq 3$, we parameterize the resulting integrals, and perform a truncation like  Heap--Lindqvist. After an analogous change of variables in \cref{sec:analytic-integral-repn}, the main contribution near the origin is extracted  in \cref{sec:analytic-magic-square}.  The proof concludes in \cref{sec:analytic-error-integrals}  with a technical analysis of the integral over the complement of the truncated region.


\section{Fourth moment computation using a combinatorial approach}
{\label{sec:combinatorial-4}}

We provide a short proof of \cref{thm:k=2} by counting coprime pairs of monic polynomials of fixed degree. We require the following lemma.

\begin{lemma}[Proposition 3 in \cite{CORTEEL1998186}]{\label{lem:corteel}}
    The number of coprime pairs of monic polynomials of degree exactly $N\geq 1$ over $\mathbb{F}_q[t]$ is $q^{2N}-q^{2N-1}$. 
\end{lemma}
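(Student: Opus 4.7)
The plan is to prove \cref{lem:corteel} by Möbius inversion on the coprimality condition, reducing the count to a short evaluation of a sum of the function field Möbius function. Let $\mu_q \colon \mathcal{M} \to \{-1,0,1\}$ denote the Möbius function on monic polynomials of $\mathbb{F}_q[t]$, characterized by $\sum_{D \mid F} \mu_q(D) = \mathbbm{1}_{\{F=1\}}$.

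The first step is to write
\[
\#\{(F,G) \in \mathcal{M}_N^2 : \gcd(F,G) = 1\} = \sum_{F,G \in \mathcal{M}_N} \sum_{D \mid \gcd(F,G)} \mu_q(D),
\]
and then interchange the order of summation to isolate the contribution of each monic divisor $D$. The next step is to count, for a fixed $D \in \mathcal{M}$ of degree $d \leq N$, the number of $F \in \mathcal{M}_N$ with $D \mid F$: the map $F \mapsto F/D$ is a bijection onto $\mathcal{M}_{N-d}$, which has size $q^{N-d}$. Hence the pair count for fixed $D$ is $q^{2(N-d)}$, and the expression above becomes
\[
\sum_{d=0}^{N} q^{2N - 2d} \sum_{\substack{D \in \mathcal{M} \\ \deg D = d}} \mu_q(D).
\]

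The final step is to evaluate the inner sum using the generating function identity $\zeta_q(z)^{-1} = 1 - qz$. Comparing coefficients of $z^d$ in $\sum_{D \in \mathcal{M}} \mu_q(D) z^{\deg D} = 1 - qz$, one obtains $\sum_{\deg D = 0} \mu_q(D) = 1$, $\sum_{\deg D = 1} \mu_q(D) = -q$, and $\sum_{\deg D = d} \mu_q(D) = 0$ for $d \geq 2$. Substituting into the outer sum gives $q^{2N} + (-q) \cdot q^{2N-2} = q^{2N} - q^{2N-1}$, as desired.

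There is no serious obstacle here: every ingredient is a direct function field analogue of standard Möbius inversion over $\Z$, and the Dirichlet-style generating function $\zeta_q(z)^{-1} = 1 - qz$ truncates the sum after $d = 1$, so no asymptotic analysis is needed to obtain the exact formula. (For completeness, one could alternatively derive the same identity from the generating function factorization $\sum_{F,G \text{ coprime}} z^{\deg F} w^{\deg G} = \zeta_q(z)\zeta_q(w)/\zeta_q(zw) = (1-qzw)/((1-qz)(1-qw))$ and extracting the coefficient of $z^N w^N$; the bijection $(D, F', G') \mapsto (DF', DG')$ with $\gcd(F',G')=1$ justifies the factorization.)
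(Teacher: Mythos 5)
Your proof is correct. The paper itself does not prove \cref{lem:corteel}; it simply cites Proposition~3 of \cite{CORTEEL1998186} as a black box, so there is no in-paper argument to compare against. Your Möbius-inversion derivation is the standard one and is carried out cleanly: interchanging the order of summation gives $\sum_{d=0}^N q^{2(N-d)} \sum_{\deg D = d} \mu_q(D)$, and the identity $\sum_{D} \mu_q(D) z^{\deg D} = \zeta_q(z)^{-1} = 1 - qz$ kills every term with $d \geq 2$, leaving $q^{2N} - q \cdot q^{2N-2}$. One small point worth making explicit is that the $d=1$ term is only present when $N \geq 1$, which is exactly why the hypothesis $N \geq 1$ appears in the statement (for $N = 0$ the count is $1$, not $q^0 - q^{-1}$); your restriction ``$d \leq N$'' handles this correctly, but the reader benefits from seeing that the edge case is genuinely excluded rather than accidentally so. The alternative generating-function derivation you sketch, $\sum_{F,G \text{ coprime}} z^{\deg F} w^{\deg G} = (1-qzw)/((1-qz)(1-qw))$, is also valid and is in fact closer in spirit to the machinery the paper uses in \cref{sec:analytic-4}, so it would integrate naturally if one wished to make the section self-contained.
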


\begin{proof}[Proof of \cref{thm:k=2}]
By \eqref{eqn:setup}, the fourth moment is given by
$$\qsum_{\substack{F_1, F_2, G_1, G_2 \in \mathcal{M}_N \\ F_1 F_2=G_1 G_2}}1.$$
We note that $F_1F_2 = G_1 G_2$ if and only if
\begin{align*}
    \frac{F_1}{(F_1, G_2)}\frac{F_2}{(F_2, G_1)} = \frac{G_1}{(F_2, G_1)}\frac{G_2}{(F_1, G_2)},
\end{align*}
where $(F,G)$ denotes the \textit{GCD} of $F$ and $G$,
which is equivalent to $$\frac{F_1}{(F_1, G_2)}=\frac{G_1}{(F_2, G_1)} \text{ and }\frac{F_2}{(F_2, G_1)} = \frac{G_2}{(F_1, G_2)}.$$
 Letting $A := \frac{F_1}{(F_1, G_2)}$ and $B := \frac{G_2}{(F_1, G_2)}$, we observe that $A$ and $B$ can be any monic, coprime polynomials such that $$\deg A = \deg B = N-\deg(F_1, G_2) = N-\deg(F_2, G_1),$$ while $(F_1, G_2)$ and $(F_2, G_1)$ can be any monic polynomials of degree $N-\deg A = N-\deg B$. Thus, counting the possible values of $(F_1, G_2)$ and $(F_2, G_1)$ for each possible value of $A$ and $B$, we have \begin{align*}
    \qsum_{\substack{F_1, F_2, G_1, G_2 \in \mathcal{M}_N \\ F_1 F_2=G_1 G_2}}1&= \dsum_{\substack{(A,B)=1\\ \deg A = \deg B \leq N}}\Big(\sum_{F \in \cM_{N-\deg A}} 1\Big)^2\\
    &= \dsum_{\substack{(A,B)=1\\ \deg A = \deg B \leq N}}\Big(q^{N-\deg A}\Big)^2\\
    &= \sum_{m=0}^N q^{2(N-m)} \dsum_{\substack{(A,B)=1\\ \deg A=\deg B=m}}1\\
    &= N q^{2N}\Big(1-\frac{1}{q}\Big)+q^{2N},
\end{align*}
where the last line follows from \cref{lem:corteel}.
\end{proof}

\section{Combinatorial method in the large $q$ limit: proof of \cref{thm:q-limit}}
{\label{sec:combinatorial-general}}
This section is dedicated to the proof of \cref{thm:q-limit} by generalizing the combinatorial approach of \cref{sec:combinatorial-4}. As in the $k=2$ case, we can divide the equation $$F_1 \cdots F_k = G_1 \cdots G_k$$ by certain greatest common divisors to reduce the problem of computing the $2k$th moment to a problem of counting sets of polynomials with certain coprimality conditions. For $k>2$, these coprimality conditions are more complex and, to the best of our knowledge, there are no known exact formulas for counting the number of polynomials satisfying these conditions. For simplicity, rather than describing equivalent conditions to $F_1\cdots F_k = G_1 \cdots G_k$, we define an injection from \begin{equation}\label{set to count}\{(F_1, \dots, F_k, G_1, \dots, G_k) \in \cM_N^{2k}\ \big|\ F_1\cdots F_k = G_1 \cdots G_k\}\end{equation} to $\cM_{k^2}$ and then apply simple bounds to the image of \eqref{set to count}.

We first present the argument for $k=3$ as an example that will illustrate the general case. We consider
    $$F_1\,F_2\,F_3 = G_1\, G_2\, G_3,$$ where $F_i, G_i \in \cM_N$.
 
Due to the additional factor compared to the $k=2$ case, we will require multiple recursive steps to decompose the products. We begin by letting
\begin{align*}
    A_{12} &= (F_1, G_2)\\
    A_{23} &= (F_2, G_3)\\
    A_{31} &= (F_3, G_1).
\end{align*}
Here the indices are determined by the permutation $\sigma = (1\,2\,3)$, so that $A_{i\sigma(i)} = (F_i, G_{\sigma(i)})$. For the next set of GCDs, we repeat this shift in the indices by letting $$A_{i\,\sigma^2(i)} = \Big(\frac{F_i}{A_{i\,\sigma(i)}}, \frac{G_{\sigma^2(i)}}{A_{\sigma(i)\, \sigma^2(i)}}\Big),$$ so that
\begin{align*}
    A_{13} &= \Big(\frac{F_1}{A_{12}}, \frac{G_3}{A_{23}}\Big),\\
    A_{21} &= \Big(\frac{F_2}{A_{23}}, \frac{G_1}{A_{31}}\Big),\\
    A_{32} &= \Big(\frac{F_3}{A_{31}}, \frac{G_2}{A_{12}}\Big).\\
\end{align*}
Repeating the shifts a third time, we have
\begin{align*}
    A_{i\sigma(i)^3} = A_{ii} = \frac{F_i}{A_{i\sigma(i)} A_{i \sigma^2(i)}}= \frac{G_i}{A_{\sigma^{-1}(i) i} A_{ \sigma^{-2}(i) i}},
\end{align*}
with the last equality following from the fact that for 
 $i\neq j$, $$\Big(\frac{F_i}{A_{i\sigma(i)} A_{i \sigma^2(i)}}, \frac{G_j}{A_{\sigma^{-1}(j) j} A_{ \sigma^{-2}(j) j}}\Big) = 1.$$ 

We then have that \begin{align*}
    F_i = A_{i \sigma(i)} A_{i \sigma^2(i)} A_{ii} \qquad\text{and}\qquad G_j = A_{\sigma^{-1}(j) j} A_{ \sigma^{-2}(j) j} A_{jj}.
\end{align*}
Thus, we have mapped $(F_1, F_2, F_3, G_1, G_2, G_3)$ to a unique matrix $$
\begin{bmatrix}A_{11} &A_{12} & A_{13}\\
                A_{21} &A_{22} &A_{23}\\
                A_{31} &A_{32} &A_{33}\end{bmatrix}$$
in which the product of the entries in row $i$ is $F_i$ and the product of the entries in column $j$ is $G_j$. We will call this matrix the \emph{GCD matrix} of $(F_1, F_2, F_3, G_1, G_2, G_3)$. 

Now, letting $\deg A_{ij} = a_{ij}$, we note that the matrix containing the degrees of the polynomials in the GCD matrix, $$\begin{bmatrix}a_{11} &a_{12} & a_{13}\\
                a_{21} &a_{22} &a_{23}\\
                a_{31} &a_{32} &a_{33}\end{bmatrix},$$
is a magic square since row $i$ must sum to $\deg F_i = N$ and column $j$ must sum to $\deg G_j = N$. We will call this the \emph{GCD degree matrix} of $(F_1, F_2, F_3, G_1, G_2, G_3)$.

For $S$ a magic square of size $3$ for the number $N$, we let $R(S)$ count the number of tuples in $$\{(F_1, F_2, F_3, G_1, G_2, G_3) \in \cM_N^6\ \big|\ F_1 F_2 F_3 = G_1 G_2 G_3 \}$$  with GCD degree matrix given by $S$. We then have that 
$$\mathbb{E}\Big|\sum_{F\in \cM_N}f(F)\Big|^{6} = \sum_{S}R(S),$$  where the sum is taken over all magic squares of size $3$ for the number $N$. We would now like to bound $R(S)$. We first note that $R(S)$ is bounded above by the total number of $3\times3$ matrices with entries in $\cM$ and with the degrees of the entries given by $S$. Thus $R(S) \leq q^{a_{11}+a_{12}+\cdots+a_{32}+a_{33}} = q^{3N}$. To bound $R(S)$ from below, we consider a $3\times 3$ matrix with entries $A_{ij}$ in $\cM$ such that the entries are pairwise coprime and the degrees of the entries are given by $S$. We have that $(A_{i1} A_{i2} A_{i3},\, A_{1j}A_{2j}A_{3j}) = A_{ij}$, so this matrix is the GCD matrix of $(F_1, F_2, F_3, G_1, G_2, G_3)$, where $$F_i = A_{i1}A_{i2} A_{i3}$$ and $$G_j = A_{1j}A_{2j}A_{3j}.$$ It follows that $R(S)$ is bounded below by the number of pairwise coprime monic polynomials with degrees equal to the entries of $S$.

Before establishing an asymptotic for this lower bound, we return for the moment to the general case. 
For $k\geq 3$,
given $\sigma = (1\, 2\, \dots\, k)$ and $s\leq k$, we let $$A_{i\,\sigma^s(i)} = \Big(\frac{F_i}{\prod_{r<s}A_{i\,\sigma^r(i)}}, \frac{G_{\sigma^s(i)}}{\prod_{r<s}A_{\sigma^{-r}(i)\,\sigma^s(i)}}\Big).$$ As we saw for $k = 3$, we have
$$
A_{ii} = A_{i\sigma^k(i)} = \frac{F_i}{\prod_{r<k}A_{i\,\sigma^r(i)}} =  \frac{G_i}{\prod_{r<k}A_{\sigma^{-r}(i)\,i}},
$$
meaning that \begin{align*}
    F_i = \prod_{h = 1}^k A_{i\, h} \qquad \text{and}\qquad G_j = \prod_{h = 1}^k A_{h\,j}.
\end{align*}
In this way, we map $(F_1, \dots, F_k, G_1, \dots, G_k)$ to the GCD matrix $$
\begin{bmatrix}A_{11} &\cdots & A_{1k}\\
                \vdots &\ddots &\vdots\\
                A_{k1} &\cdots &A_{kk}\end{bmatrix}$$
in which the product of the entries in row $i$ is $F_i$ and the product of the entries in column $j$ is $G_j$. Letting $\deg A_{ij} = a_{ij}$, we note again that the GCD degree matrix, 
$$\begin{bmatrix}a_{11} &\cdots& a_{1k}\\
                \vdots &\ddots &\vdots\\
                a_{k1} &\cdots &a_{kk}\end{bmatrix},$$
is a magic square. We remark that a similar GCD matrix construction was explored by Granville and Soundararajan in \cite{GranvilleSound} in their study of $2k$th moments in the integer setting. 

For $S$ a magic square of size $k$ for the number $N$, let $R(S)$ count the number of tuples in $$\{(F_1, \dots, F_k, G_1, \dots, G_k) \in \cM_N^{2k}\ \big|\ F_1 \cdots F_k = G_1 \cdots G_k \}$$  with GCD degree matrix $S$, so that
$$\mathbb{E}\Big|\sum_{F\in \cM_N}f(F)\Big|^{2k} = \sum_{S}R(S).$$
As before, the sum is taken over all magic squares of size $k$ for the number $N$.
We observe that $R(S)$ is bounded above by the total number of $k\times k$ matrices with entries in $\cM$ and with the degrees of the entries given by $S$. Thus $R(S) \leq q^{\sum_{i=1}^k \sum_{j=1}^k a_{ij}} = q^{kN}$. To bound $R(S)$ from below, we consider a $k\times k$ matrix with pairwise coprime entries $A_{ij}$ in $\cM$ such that the degrees of the entries are given by $S$. We have that $$\Big(\prod_{h=1}^k A_{ih},\, \prod_{h=1}^k A_{hj}\Big) = A_{ij},$$ so this matrix is the GCD matrix of $(F_1, \dots, F_k, G_1, \dots, G_k)$, where \begin{align*}
    F_i = \prod_{h = 1}^k A_{i\, h} \qquad\text{and}\qquad G_j = \prod_{h = 1}^k A_{h\,j}.
\end{align*} It follows that $R(S)$ is bounded below by the number of pairwise coprime monic polynomials with degrees equal to the entries of $S$.

To obtain the $q$-limit asymptotic and complete the proof of \cref{thm:q-limit}, we will need the following result on the number of pairwise coprime polynomials of fixed degrees. 

\begin{lemma}[Corollary 3 in \cite{LiebProbabilityEstimates}]\label{lem:pairwise_coprime}
For $(d_1, \dots, d_s)\in \mathbb{N}^s$, the number of monic pairwise coprime polynomials $f_i \in \mathbb{F}_q[t]$  with $\deg(f_i)= d_i$ for $i=1, \dots, s$ is
$$ q^{d_1+\cdots+d_s}\left( 1-\frac{s(s-1)}{2}\cdot\frac{1}{q}+O_{s,d_1,\dots,d_s}\left(\frac{1}{q^2}\right)\right).$$
\end{lemma}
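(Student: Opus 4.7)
The plan is to prove the lemma by inclusion--exclusion over the $\binom{s}{2}$ pairwise coprimality conditions. For each pair $1 \leq i < j \leq s$, let $B_{ij}$ denote the set of tuples $(f_1,\ldots,f_s) \in \cM_{d_1} \times \cdots \times \cM_{d_s}$ with $\gcd(f_i,f_j) \neq 1$. Writing the count of pairwise coprime tuples as
\[
\sum_{S \subseteq \binom{[s]}{2}} (-1)^{|S|} |B_S|, \qquad B_S := \bigcap_{(i,j) \in S} B_{ij},
\]
the term $S = \emptyset$ contributes the main term $|B_\emptyset| = q^{d_1+\cdots+d_s}$.

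For the first-order correction I would count coprime pairs of monic polynomials of prescribed degrees using M\"obius inversion. The identity $\sum_{D \in \cM} \mu(D) z^{\deg D} = 1/\zeta_q(z) = 1 - qz$ implies that the number of coprime pairs of monic polynomials of degrees $a,b \geq 1$ equals $q^{a+b}(1 - 1/q)$. Hence $|B_{ij}| = q^{d_1+\cdots+d_s-1}$, and summing over the $\binom{s}{2}$ pairs produces exactly the $-\tfrac{s(s-1)}{2} \cdot q^{-1}$ term claimed in the lemma, after normalising by $q^{d_1+\cdots+d_s}$.

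The core step is then to show that $|B_S| \ll_{s,d_1,\ldots,d_s} q^{d_1+\cdots+d_s-2}$ uniformly in $q$ for every $S$ with $|S|\geq 2$, so that all remaining terms of the inclusion--exclusion feed into the error. I would proceed via the crude bound
\[
|B_S| \leq \sum_{\{P_{ij}\}_{(i,j) \in S}} \#\bigl\{(f_1,\ldots,f_s) : P_{ij} \mid f_i \text{ and } P_{ij} \mid f_j \text{ for every } (i,j) \in S\bigr\},
\]
where each $P_{ij}$ ranges over monic irreducibles of $\mathbb{F}_q[t]$ of degree at least $1$. For a fixed choice of $\{P_{ij}\}$, each $f_k$ must be divisible by the lcm of the chosen irreducibles incident to the vertex $k$; if this lcm has degree $m_k$, there are at most $q^{d_k - m_k}$ admissible $f_k$. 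Summing over the $P_{ij}$'s and using that the number of monic irreducibles of degree $d$ is at most $q^d/d$, each geometric sum $\sum_P q^{-c\deg P}$ with $c \geq 2$ is $O(q^{-1})$, which provides the desired $q^{-2}$ savings.

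The main obstacle is to verify that this $q^{-2}$ saving emerges uniformly in the edge pattern of $S$. A short case analysis suffices: if $S$ contains two vertex-disjoint edges, then their constraints decouple and each contributes $q^{-1}$; otherwise some vertex $k$ is incident to at least two edges of $S$, and whether the two attached irreducibles coincide or not determines whether the saving is $q^{-2}$ on $f_k$ alone, or $q^{-1}$ on each of two distinct neighbours of $k$. In every case the geometric sums $\sum_P q^{-c\deg P}$ with $c \geq 2$ remain bounded uniformly in $q$, and the number of edge configurations of $S$ is bounded in terms of $s$ and $\max_i d_i$, which gives the asymptotic stated in the lemma.
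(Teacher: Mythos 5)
The paper does not prove this lemma; it simply cites it as Corollary~3 of \cite{LiebProbabilityEstimates}, so there is no argument of the paper's to compare against. Your inclusion--exclusion sketch gives a self-contained proof and, read carefully, is correct. The main term and the first-order term are handled exactly right: the M\"obius computation $\sum_{D\in\mathcal{M}_d}\mu(D)$ equals $1, -q, 0$ for $d=0,1,\geq 2$, which forces the number of coprime pairs of degrees $a,b\geq 1$ to be $q^{a+b}(1-1/q)$ exactly, so $|B_{ij}|=q^{\sum d_i - 1}$ and the $-\tfrac{s(s-1)}{2}q^{-1}$ term drops out as stated.

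One small wording issue in the final case analysis is worth flagging, because taken literally it would not close: you say that when the two edges at $k$ carry distinct irreducibles $P_{ka}\neq P_{kb}$ the saving is ``$q^{-2}$ on $f_k$ alone,'' and when they coincide it is ``$q^{-1}$ on each of two distinct neighbours of $k$.'' In both clauses you must actually use the divisibility constraints at all three incident vertices $k,a,b$, not just the ones you single out. Using $f_k$ alone in the distinct case gives the sum $\sum_{P_{ka}\neq P_{kb}} q^{-\deg P_{ka}-\deg P_{kb}}$, which diverges; using $f_a,f_b$ alone in the coincident case gives $\sum_P q^{-2\deg P}\ll q^{-1}$, which is short by a factor of $q^{-1}$. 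The correct accounting is that the total exponent saving at the three vertices is $2\deg P_{ka}+2\deg P_{kb}$ when the irreducibles differ, and $3\deg P$ when they coincide, so the respective sums $\sum_{P_{ka}\neq P_{kb}} q^{-2\deg P_{ka}-2\deg P_{kb}}$ and $\sum_P q^{-3\deg P}$ are both $\ll q^{-2}$, as is the vertex-disjoint case $\left(\sum_P q^{-2\deg P}\right)^2\ll q^{-2}$. Since you already observe elsewhere that the convergent sums are of the form $\sum_P q^{-c\deg P}$ with $c\geq 2$, this is clearly what you intend; just tighten the sentence so it does not suggest dropping the constraints at the neighbours. With that reading, the bound $|B_S|\ll q^{\sum d_i - 2}$ holds uniformly in $q$ with an implied constant depending only on $s$ (the number of weak edge configurations is $2^{\binom{s}{2}}$), which is in fact slightly stronger than the stated $O_{s,d_1,\ldots,d_s}$.

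One other minor remark: your coprime-pair formula requires $a,b\geq 1$. If one allows $d_i=0$ (so $f_i=1$), the pair $(f_i,f_j)$ is automatically coprime and the corresponding $|B_{ij}|=0$, so the coefficient of $q^{-1}$ would be smaller than $\binom{s}{2}$. The lemma as stated is therefore implicitly assuming $d_i\geq 1$ for all $i$; this is consistent with the paper's use of the lemma only for a lower bound, where having a smaller negative term only helps.
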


Applying \cref{lem:pairwise_coprime} to our lower bound for $R(S)$, we see that $$R(S) \geq q^{kN}\left( 1-\frac{k^2(k^2-1)}{2}\cdot\frac{1}{q}+O_{d_1,\dots,d_s}\left(\frac{1}{q^2}\right)\right).$$ Thus  $R(S) = q^{kN}\big(1 + O_{k,N}(q^{-1}) \big)$, and \cref{thm:q-limit} follows.

\section{Fourth moment using an analytic approach}
\label{sec:analytic-4}
We provide another proof of \cref{thm:k=2} by studying a power series in $4$ variables indexed over all $4$-tuples of monic polynomials:
\begin{equation}{\label{b_series}}
A(z_1,z_2,z_3,z_4) := \qsum_{F_1, \dots, F_{4}\in \mathcal{M}} \mathbbm{1}_{\{F_1F_2 = F_{3}F_{4}\}}z_1^{\deg F_1}\cdots z_{4}^{\deg F_{4}}. 
\end{equation}
We will check at the beginning of \cref{sec:analytic-general} in a more general case that $A$ converges uniformly on compact subsets of $\{ (z_1,z_2,z_3,z_4) \in \C^{4} : |z_i| < \tfrac{1}{q} \text{ for } 1 \leq i \leq 4\}$.  
By \eqref{eqn:setup}, calculating the fourth moment $\mathbb{E}| \sum_{F \in \mathcal{M}_N} f(F) |^4$  amounts to computing the coefficient of $z_1^N\cdots z_{4}^N$ in the above power series.

Observing that the indicator function $\mathbbm{1}_{\{F_1F_2 = F_{3}F_{4}\}}$ is multiplicative (see the \cref{sec:analytic-general} footnote for a definition of multiplicative), we find that

\begin{equation}{\label{b_bbig_ep}}
    A(z_1,z_2,z_3,z_4) = \prod_{P} \left(\qsum_{m_1+m_2 = m_{3}+m_4} (z_1^{m_1}z_2^{m_2}z_3^{m_3}z_{4}^{m_{4}})^{\deg P}\right).
\end{equation}
The product runs over all irreducible monic polynomials and the internal sum iterates over the possible ways to distribute a given irreducible factor $P$ among $F_1, \dots, F_{4}$ such that the number of factors of $P$ in $F_1F_2$ and $F_{3}F_{4}$ are equal. As in \cite{HeapLindqvist-2016}, we factor this product as
\begin{equation}\label{factorization_of_A}
A(z_1, \dots, z_{4}) = B(z_1, \dots, z_{4}) \prod_{i=1}^2 \prod_{j=3}^{4} \zeta_q(z_iz_j).
\end{equation}
Here, $B$ is defined implicitly by \eqref{factorization_of_A}, namely
\begin{equation}\label{B_k=2}
B(z_1, z_2, z_3, z_4) = \prod_{P}\Big(\qsum_{\substack{m_1+m_2=m_3+m_4}}\Big(z_1^{m_1}z_2^{m_2}z_3^{m_3}z_4^{m_4}\Big)^{\deg P}\Big)\prod_{i=1}^2\prod_{j=3}^4 \Big(1-(z_iz_j)^{\deg P}\Big).
\end{equation}

We note that absolute and uniform convergence of this expression for $|z_i| < \frac{1.1}{\sqrt{q}}$ will follow by applying \cref{prop:convergence_of_B}, which we prove in \cref{sec:analytic-B-convergence}, for $k =2$. 

Recall that by the multivariate version of Cauchy's integral formula, obtaining the desired coefficient of $A$ can be done by computing
\begin{align*}{\label{b_cauchy_integral}}
     &\frac{1}{(2\pi i)^{4}}\int_{|z_{4}|=\epsilon_{4}}\cdots \int_{|z_{1}|=\epsilon_1} \frac{A(z_1, \dots, z_{4})}{z_1^{N+1}\cdots z_{4}^{N+1}}dz_1\cdots dz_{4}\\
    &= \frac{1}{(2\pi i)^{4}}\int_{|z_{4}|=\epsilon_{4}}\cdots \int_{|z_{1}|=\epsilon_1} \frac{B(z_1, \dots, z_{4}) \prod_{i=1}^2 \prod_{j=3}^{4} \zeta_q(z_iz_j)}{z_1^{N+1}\cdots z_{4}^{N+1}}dz_1\cdots dz_{4}, 
\end{align*}
where we integrate over a polydisc with small radii $0 < \epsilon_i < q^{-1}$ around the origin. 
We proceed to find a simple formula for \eqref{B_k=2}
which will enable us to explicitly compute the above integral.
To do so, we study the local factors of the $B$-term in the following lemma.
\begin{lemma}
The following identity holds:
$$
\Big(\qsum_{m_1+m_2=m_3+m_4}z_1^{m_1}z_2^{m_2}z_3^{m_3}z_4^{m_4}\Big)\prod_{i=1}^2\prod_{j=3}^4 \left(1-z_iz_j\right) = 1-z_1z_2z_3z_4.
$$
\end{lemma}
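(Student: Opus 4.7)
The plan is to evaluate the quadruple sum $S := \qsum_{m_1+m_2=m_3+m_4} z_1^{m_1}z_2^{m_2}z_3^{m_3}z_4^{m_4}$ in closed form, multiply through by the four local factors, and simplify algebraically. Since the claim is an identity of (Laurent) polynomials, I may freely assume $z_1 \neq z_2$ and $z_3 \neq z_4$ in the calculation and recover the general case by continuity.

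First, I would parameterize by $n := m_1+m_2 = m_3+m_4$ to rewrite
\begin{equation*}
S = \sum_{n \geq 0}\Big(\sum_{m_1+m_2=n} z_1^{m_1}z_2^{m_2}\Big)\Big(\sum_{m_3+m_4=n} z_3^{m_3}z_4^{m_4}\Big),
\end{equation*}
and apply the elementary identity $\sum_{a+b=n} x^a y^b = (x^{n+1}-y^{n+1})/(x-y)$ to each inner sum. Expanding the product of the two resulting closed forms splits $S$ into four geometric series $\sum_{n\geq 0}(z_iz_j)^{n+1} = z_iz_j/(1-z_iz_j)$ indexed by $(i,j)\in\{1,2\}\times\{3,4\}$, giving
\begin{equation*}
S = \frac{1}{(z_1-z_2)(z_3-z_4)}\Big[\frac{z_1z_3}{1-z_1z_3} - \frac{z_1z_4}{1-z_1z_4} - \frac{z_2z_3}{1-z_2z_3} + \frac{z_2z_4}{1-z_2z_4}\Big].
\end{equation*}

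Next, I would multiply by $\prod_{i=1}^{2}\prod_{j=3}^{4}(1-z_iz_j)$ to clear denominators. To minimize algebraic bookkeeping, I would group the four resulting terms by the second index: the $(i,j)=(1,3),(2,3)$ pair shares the common factor $(1-z_1z_4)(1-z_2z_4)$ and reduces, after cancellation of $z_1z_2z_3$, to $z_3(z_1-z_2)(1-z_1z_4)(1-z_2z_4)$; the $(1,4),(2,4)$ pair reduces analogously to $-z_4(z_1-z_2)(1-z_1z_3)(1-z_2z_3)$. Pulling out the common $(z_1-z_2)$ cancels the $(z_1-z_2)$ in the denominator, reducing the identity to verifying
\begin{equation*}
\frac{z_3(1-z_1z_4)(1-z_2z_4) - z_4(1-z_1z_3)(1-z_2z_3)}{z_3-z_4} = 1-z_1z_2z_3z_4.
\end{equation*}

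Finally, this last identity is a direct polynomial expansion: each product contributes the symmetric cross term $(z_1+z_2)z_3z_4$, which cancels in the difference, and the remaining monomials collapse as $(z_3-z_4) - z_1z_2z_3z_4(z_3-z_4) = (z_3-z_4)(1-z_1z_2z_3z_4)$, completing the proof after dividing by $z_3-z_4$. The only obstacle is the polynomial bookkeeping in the middle step, which the pairing strategy above largely eliminates; the remainder of the argument is formal.
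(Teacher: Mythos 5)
Your proof is correct, and it takes a genuinely different route from the paper's. You sum the inner "diagonal'' sums in closed form via $\sum_{a+b=n} x^a y^b = (x^{n+1}-y^{n+1})/(x-y)$, sum the resulting geometric series, and reduce the identity to a small rational-function computation after multiplying by the four local factors — an essentially algebraic partial-fraction argument, and I checked the final reduction
$z_3(1-z_1z_4)(1-z_2z_4) - z_4(1-z_1z_3)(1-z_2z_3) = (z_3-z_4)(1-z_1z_2z_3z_4)$
does hold, so the bookkeeping survives. The paper instead never writes a closed form for the quadruple sum: it expands $\prod_{i,j}(1-z_iz_j)^{-1}$ as a multi-geometric series in indices $k_{ij}$, multiplies by $(1-z_1z_2z_3z_4)$, and exhibits an explicit telescoping pairing $(k_{13},k_{14},k_{23},k_{24}) \leftrightarrow (k_{13}-1,k_{14},k_{23},k_{24}-1)$ whose unpaired terms (those with $k_{13}=0$ or $k_{24}=0$) are put in bijection with the solutions of $m_1+m_2=m_3+m_4$. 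The paper's argument is purely combinatorial and stays inside formal power series throughout, so it requires no aside about dividing by $(z_1-z_2)(z_3-z_4)$; your argument is arguably faster to carry out mechanically but relies on the "generic specialization, then continuity'' reduction, which is fine but could be phrased more precisely as an identity of rational functions (cancel the nonzero factor $(z_1-z_2)(z_3-z_4)$ in the polynomial ring) rather than an analytic limiting argument.
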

\begin{proof}

Note that 
\begin{equation}\label{geometric_series_identity}
\prod_{i=1}^2 \prod_{j = 3}^4 \frac{1}{1-z_i z_j} = \qsum_{k_{13},k_{14},k_{23},k_{24}} z_1^{k_{13}+k_{14}} z_2^{k_{23}+k_{24}} z_3^{k_{13}+k_{23}} z_4^{k_{14}+k_{24}}.
\end{equation}
We can thus write the product of \eqref{geometric_series_identity} times $z_1z_2z_3z_4$ as
\begin{equation}\label{identity_multiplied_by_zs}
\qsum_{k_{13},k_{14},k_{23},k_{24}} z_1^{(k_{13}+1)+k_{14}} z_2^{k_{23}+(k_{24}+1)} z_3^{(k_{13}+1)+k_{23}} z_4^{k_{14}+(k_{24}+1)}.
\end{equation}
Subtracting this from \eqref{geometric_series_identity} amounts to multiplying \eqref{geometric_series_identity} by $(1-z_1 z_2 z_3 z_4)$. Now, notice that the monomial in \eqref{identity_multiplied_by_zs} associated to the index $(k_{13}-1,k_{14},k_{23},k_{24}-1)$ can be paired with the monomial associated to the index $(k_{13},k_{14},k_{23},k_{24})$ in \eqref{geometric_series_identity} to cancel each other out. This way of pairing terms is not unique, but with this choice the only remaining terms come from monomials in \eqref{geometric_series_identity} where either $k_{13} = 0$ or $k_{24} = 0$.

Now, it suffices to show that given a tuple $(m_1,m_2,m_3,m_4)$ such that 
$$
m_1 + m_2 = m_3 + m_4,
$$
there is a unique tuple $(k_{13},k_{14},k_{23},k_{24})$ with either $k_{13} = 0$ or $k_{24} = 0$ such that 
$$(m_1,m_2,m_3,m_4) = (k_{13}+k_{14},k_{23}+k_{24},k_{13}+k_{23},k_{14}+k_{24}).$$
To see this, assume without loss of generality that $m_1$ is maximal among the $m_i$. Then $m_1-m_4=k_{13}-k_{24}\geq 0$ so we must have $k_{24}=0$. It then follows that $k_{23}=m_2$, $k_{14}=m_4$, and $k_{13}=m_3-m_2$. 
\end{proof}

We then observe
\begin{align*}
    B(z_1, \dots, z_4) &= \prod_{P} \Big( 1-(z_1z_2z_3z_4)^{\deg P} \Big)\\
    &= \zeta_q(z_1z_2z_3z_4)^{-1}\\
    &= 1-qz_1z_2z_3z_4.
\end{align*}
Substituting our formula for $B(z_1, \dots, z_4)$,  the precise moment for $k=2$ is therefore 
$$\frac{1}{(2\pi i)^{4}}\int_{|z_{4}|=\epsilon_{4}}\cdots \int_{|z_{1}|=\epsilon_1} \frac{(1-qz_1z_2z_3z_4) \prod_{i=1}^2 \prod_{j=3}^{4} \zeta_q(z_iz_j)}{z_1^{N+1}\cdots z_{4}^{N+1}}dz_1\cdots dz_{4}, $$
which we may split using linearity as
\begin{align*}
\frac{1}{(2\pi i)^{4}}\int_{|z_{4}|=\epsilon_{4}}\cdots \int_{|z_{1}|=\epsilon_1} &\frac{\prod_{i=1}^2 \prod_{j=3}^{4} \zeta_q(z_iz_j)}{z_1^{N+1}\cdots z_{4}^{N+1}}dz_1\cdots dz_{4} \\
-&q \frac{1}{(2\pi i)^{4}}\int_{|z_{4}|=\epsilon_{4}}\cdots \int_{|z_{1}|=\epsilon_1} \frac{\prod_{i=1}^2 \prod_{j=3}^{4} \zeta_q(z_iz_j)}{z_1^{N}\cdots z_{4}^{N}}dz_1\cdots dz_{4}. 
\end{align*}

The following proposition will allow us to conclude. 
\begin{proposition} \label{prop:fourth-magic-square}
    Fix $0 <\epsilon < q^{-1}$. For each integer $k\geq 1$,
    \begin{equation}{\label{integral}}
        \frac{1}{(2\pi i)^{2k}}\int_{|z_{2k}|=\epsilon}\cdots \int_{|z_{1}|=\epsilon} \frac{\prod_{i=1}^k\prod_{j=k+1}^{2k} \zeta_q(z_iz_j)}{z_1^{N+1}\cdots z_{2k}^{N+1}}dz_1\cdots dz_{2k} = q^{Nk}\mathcal{S}_k(N).
    \end{equation}
\end{proposition}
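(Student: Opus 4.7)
The plan is to identify the integral as extracting the coefficient of $z_1^N\cdots z_{2k}^N$ in the power series for $\prod_{i=1}^k \prod_{j=k+1}^{2k} \zeta_q(z_iz_j)$, and then show by direct expansion that this coefficient equals $q^{kN}\mathcal{S}_k(N)$. The multivariate Cauchy integral formula gives exactly this interpretation for any function analytic on a polydisc, so the whole task reduces to a clean series manipulation; no genuine obstacle is expected.

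First I would expand each factor as a geometric series, valid since $|z_iz_j| \leq \epsilon^2 < q^{-2} < q^{-1}$:
\[
\zeta_q(z_i z_j) = \frac{1}{1-q z_i z_j} = \sum_{m_{ij}\geq 0} q^{m_{ij}} (z_i z_j)^{m_{ij}}.
\]
Multiplying these over the $k^2$ pairs $(i,j)$ with $1\leq i\leq k < j \leq 2k$, and indexing monomials by $k\times k$ nonnegative integer matrices $M=(m_{ij})$, gives
\[
\prod_{i=1}^k\prod_{j=k+1}^{2k} \zeta_q(z_iz_j) = \sum_{M} q^{|M|} \prod_{i=1}^k z_i^{\,r_i(M)} \prod_{j=k+1}^{2k} z_j^{\,c_j(M)},
\]
where $r_i(M)$ is the $i$th row sum, $c_j(M)$ is the $(j{-}k)$th column sum, and $|M| = \sum_{i,j} m_{ij}$. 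The series converges absolutely and uniformly on the polydisc $|z_1|=\cdots=|z_{2k}|=\epsilon$.

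Next I would extract the coefficient of $z_1^N \cdots z_{2k}^N$. A monomial in the above expansion contributes to this coefficient precisely when $r_i(M)=N$ for all $i=1,\ldots,k$ and $c_j(M)=N$ for all $j$, that is, when $M$ is a $k\times k$ magic square with magic constant $N$. For any such $M$, summing the row sums gives $|M|=kN$, so every contributing matrix carries the same weight $q^{kN}$. Hence the coefficient is exactly $q^{kN}\mathcal{S}_k(N)$.

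Finally I would invoke the multivariate Cauchy integral formula: since the integrand $\prod_{i,j}\zeta_q(z_iz_j)$ is holomorphic on a neighborhood of the closed polydisc $\{|z_\ell|\leq \epsilon\}$ (using $\epsilon<q^{-1}$), uniform convergence of the series justifies termwise integration, and each monomial in $M$ picks up the factor $\prod_\ell \frac{1}{2\pi i}\oint |z_\ell|^{\text{exp}} z_\ell^{-N-1}dz_\ell$, which is $1$ if the exponent equals $N$ and $0$ otherwise. This reproduces the extracted coefficient and yields the claimed identity $q^{kN}\mathcal{S}_k(N)$. The only technical check is the interchange of sum and integral, which is immediate from the absolute and uniform convergence of the geometric expansions on $|z_\ell|=\epsilon$.
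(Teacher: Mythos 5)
Your proof is correct and follows essentially the same approach as the paper: interpret the Cauchy integral as coefficient extraction, expand each $\zeta_q(z_iz_j)$ geometrically, and count the contributing monomials as magic squares. The paper merely organizes the extraction in two stages (first the coefficient of $z_j^N$ for $j = k+1, \dots, 2k$, then the remaining coefficient of $z_1^N\cdots z_k^N$), whereas you index all $k^2$ exponents at once by the matrix $M$; the underlying computation is identical.
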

Indeed, noting that $\mathcal{S}_2(N) = N+1$, it then follows that the fourth moment equals
$$(N+1)q^{2N} - q\cdot Nq^{2(N-1)} = Nq^{2N}\Big(1-\frac{1}{q}\Big) + q^{2N},$$
as desired.

\begin{proof}[Proof of \cref{prop:fourth-magic-square}]  Fix $j\in \{k+1,\dots, 2k\}$. Note that we can compute the integral over $z_j$ by computing the coefficient of $z_j^N$ in the product 
\begin{equation}\label{z_j product}\prod_{i=1}^{k} \zeta_q(z_iz_j).\end{equation}
Recall that $\zeta_q(z) = \sum_{n\geq 0}q^n\cdot z^n$. The coefficient of $z_j^N$ in \eqref{z_j product} is then just
$$q^N \msum_{m_{1}+\cdots +m_{k} = N} z_{1}^{m_{1}}\cdots z_{k}^{m_{k}}.$$
Observe that this product is independent of our choice of $j$, and since the $j$'s are independent of each  other in the product, we can integrate out $z_{k+1}, \dots, z_{2k}$. In this way we can transform the integral expression in \eqref{integral} into
$$\frac{1}{(2\pi i)^k}\int_{|z_{k}|=\epsilon}\dots \int_{|z_{1}|=\epsilon} \frac{q^{Nk}\Big(\ds\sum_{m_{1}+\cdots +m_{k} = N} z_{1}^{m_{1}}\cdots z_{k}^{m_{k}}\Big)^k}{z_{1}^{N+1} \cdots z_{k}^{N+1}}dz_{1}\cdots dz_{k}.$$

At this point we can directly compute the coefficient of $z_{1}^N\cdots z_{k}^N$ in the numerator.

Reindexing the sums, we expand it as
\begin{align*}
q^{Nk} \Big(\msum_{m_{1, 1}+\cdots +m_{k, 1} = N} z_{1}^{m_{1, 1}}\cdots z_{k}^{m_{k, 1}}\Big)  \cdots
\Big(\msum_{m_{1, k}+\cdots +m_{k, k} = N} z_{1}^{m_{1, k}}\cdots z_{k}^{m_{k, k}}\Big).
\end{align*}
This product is precisely
$$\msum_{\substack{m_{1, 1}+\cdots +m_{k, 1} = N\\ \vdots\\ m_{1, k}+\cdots +m_{k, k} = N}} z_{1}^{\sum_{j=1}^k m_{1, j}}\cdots z_{k}^{\sum_{j=1}^k m_{k, j}}.$$

We observe that the coefficient of $z_1^N\cdots z_k^N$ is given by the number of sets of $k^2$ integers $m_{i,j}$ such that in the square $$\begin{bmatrix}m_{1,1} &\cdots &m_{k,1}\\ \vdots &\ddots &\vdots\\ m_{1,k} &\cdots &m_{k,k}\end{bmatrix},$$ each row sums to $N$ (enforcing the condition over the summation) and each column sums to $N$ (ensuring that all exponents are raised to the $N$th power).
Thus the integral is precisely equal to $q^{Nk} \mathcal{S}_k(N)$.
\end{proof}

\begin{remark}
    Strictly speaking, the above analytic proof of \cref{thm:k=2} does not require any complex analysis or Cauchy's integral formula. The argument can be reformulated using only formal power series. We chose this approach because it closely parallels the proof of \cref{thm:steinhaus} and we hope the reader can benefit from those parallels.  
\end{remark}

\section{Analytic Computation of Steinhaus Moments: proof of \cref{thm:steinhaus}}
\label{sec:analytic-general}

The remainder of the paper will be dedicated to the proof of \cref{thm:steinhaus}. We provide a brief outline. This section establishes the theorem assuming some key propositions whose proof will be postponed to subsequent sections; the strategy closely parallels \cref{sec:analytic-4}. First, we form the formal power series $A(z_1,\dots,z_{2k})$ for counting solutions to \eqref{eqn:setup}. We express this series as a product of zeta functions and another power series, denoted $B(z_1,\dots,z_{2k})$, and then form the contour integral representation of \eqref{eqn:setup}. Second, using analytic properties of $B$  (\cref{prop:convergence_of_B} proved in \cref{sec:analytic-B-convergence}), we express \eqref{eqn:setup} as an iterated integral over an expanded polydisk after a suitable change of variables (\cref{prop:integral_representation_of_2kth_moment} proved in \cref{sec:analytic-integral-repn}).    Third, we extract the main contribution of this integral in terms of the arithmetic constant $b_k(q)$ and the number of magic squares $\mathcal{S}_k(N)$ (\cref{prop:magic_square_integral} proved in \cref{sec:analytic-magic-square}). Finally, we compute the size of the residual error term via a delicate analysis of an oscillatory integral (\cref{prop:error_term_estimation} proved in \cref{sec:analytic-error-integrals}), thus establishing the main theorem.  

\begin{proof}[Proof of \cref{thm:steinhaus}] One can check by hand that the explicit formula for $k=1$ matches the asymptotics given in \cref{thm:steinhaus}. We may thus assume $k \geq 2$. Furthermore, note that if $N$ eventually remains bounded by $20$ as $q^N \to \infty$, then \cref{thm:steinhaus} is already implied by \cref{thm:q-limit}. We may therefore also assume that $N$ is at least $20$. Both of these assumptions will be convenient for technical estimates later in the proof.  

We proceed as in \cref{sec:analytic-4}. Namely, we want to recover the coefficient of $z_1^N\cdots z_{2k}^N$ in the power series
\begin{equation} \label{eqn:GeneratingFunction}
    A(z_1,\dots, z_{2k}):=\msum_{F_1, \dots, F_{2k}\in \mathcal{M}} \mathbbm{1}_{\{F_1\cdots F_k = F_{k+1}\cdots F_{2k}\}}z_1^{\deg F_1}\cdots z_{2k}^{\deg F_{2k}}
\end{equation}
in $2k$ variables, which is precisely the $2k$th moment by \eqref{eqn:setup}. Note that this function is dominated by 
$$
\msum_{F_1,\dots,F_{2k} \in \mathcal{M}} |z_1^{\deg F_1} \cdots z_{2k}^{\deg F_{2k}}| = \prod_{i = 1}^{2k} \zeta_q(|z_i|),
$$
which converges uniformly on compact subsets of ${\{ (z_1,\dots,z_{2k}) \in \C^{2k} : |z_i| < \tfrac{1}{q} \text{ for } 1 \leq i \leq 2k\}}$.

Since the indicator function is multiplicative\footnote{For $r \in \mathbb{N}$, a  function $\varphi : \mathcal{M}^{r} \to \C$ is multiplicative if $\varphi(A_1B_1, \dots, A_rB_r) = \varphi(A_1,\dots,A_r) \varphi(B_1,\dots,B_r)$ for every $A_1,\dots,A_{r},B_1,\dots,B_r \in \mathcal{M}$ such that $\gcd(A_1\cdots A_r, B_1\cdots B_r) = 1$.}, we have the  product expansion 

\begin{equation*}
  A(z_1,\dots, z_{2k}) =   \prod_{P} \Bigg(\msum_{\substack{m_1+\cdots+m_k\\ = m_{k+1}+\cdots+m_{2k}}} (z_1^{m_1}\cdots z_{2k}^{m_{2k}})^{\deg P}\Bigg).
\end{equation*}

\begin{proposition}\label{prop:convergence_of_B}
Let 
$$
B(z_1,\dots,z_{2k}) := \prod_{P} \Bigg(\msum_{\substack{m_1+\cdots+m_k\\ = m_{k+1}+\cdots+m_{2k}}} (z_1^{m_1}\cdots z_{2k}^{m_{2k}})^{\deg P}\Bigg) \prod_{i = 1}^k \prod_{j = k+1}^{2k} (1-z_i^{\deg P}z_j^{\deg P}). 
$$
The infinite product $B(z)$ converges absolutely and uniformly on
$$\mathcal{R}:= \Big\{ (z_1,\dots,z_{2k}) \in \C^{2k} :  |z_i| <  \frac{1.1}{\sqrt{q}} \text{ for } 1 \leq i \leq 2k \Big\},$$
and hence $B(z)$ is holomorphic on $\mathcal{R}$. Furthermore, if $z,w\in \mathcal{R}$, then
$$
|B(z)| \ll_k 1 \quad \text{ and } \quad |B(z)-B(w)|\ll_{k} \frac{1}{\sqrt{q}} \|z-w\|_{\infty}.
$$
\end{proposition}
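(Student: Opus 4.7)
The plan is to analyze the Euler product $\prod_P L_P(z)$ factor by factor, exploiting a structural cancellation of low-degree terms. Writing $u_l := z_l^{\deg P}$, both $S(u) := \sum_{m_1+\cdots+m_k=m_{k+1}+\cdots+m_{2k}}u^m$ and the polynomial $\prod_{i,j}(1-u_iu_j)$ contain only monomials of even total degree, so $L_P(u)$ has no odd-degree terms. The constant term of $L_P$ equals $1$, and the degree-$2$ contributions $\sum_{i,j}u_iu_j$ from $S$ and $-\sum_{i,j}u_iu_j$ from the product cancel, so $L_P(u)-1 = \sum_{|m|\geq 4}a_m u^m$; this fourth-order vanishing drives the entire proof.

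For the pointwise bound, fix $\rho_0 \in (1.1/\sqrt{2},1)$ (say $\rho_0 = 0.99$). On $|u_l|\leq \rho_0$, elementary bounds give $|S(u)|\leq (1-\rho_0)^{-2k}$ and $\big|\prod_{i,j}(1-u_iu_j)\big|\leq 2^{k^2}$, so by the multivariate Cauchy coefficient formula $|a_m|\leq M_k \rho_0^{-|m|}$. Writing $\rho := 1.1/\sqrt q$, for $z \in \mathcal R$ we have $|u_l|\leq \rho^{\deg P}\leq \rho \leq 1.1/\sqrt 2 < \rho_0$, so factoring $(\rho^{\deg P}/\rho_0)^4$ out of $\sum_{|m|\geq 4}(\rho^{\deg P}/\rho_0)^{|m|}$ and bounding the residual tail $\sum_{n\geq 4}\binom{n+2k-1}{2k-1}(\rho^{\deg P}/\rho_0)^{n-4}$ uniformly yields $|L_P(z)-1|\leq C_k \rho^{4\deg P}$. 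Since there are at most $q^n/n$ monic irreducibles of degree $n$ and $(1.1)^4/q \leq (1.1)^4/2 < 1$, we conclude $\sum_P |L_P(z)-1| \ll_k 1$ uniformly on $\mathcal R$. The Weierstrass $M$-test now delivers absolute uniform convergence, holomorphicity of $B$ on $\mathcal R$, and $|B(z)|\leq\exp(C_k')\ll_k 1$.

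For the Lipschitz bound, differentiating in $u_l$ lowers the minimum $u$-degree of $L_P-1$ to $3$, so the same Cauchy argument gives $|\partial L_P/\partial u_l(z)|\leq C_k\rho^{3\deg P}$ on $\mathcal R$. The chain rule $\partial L_P/\partial z_l = (\partial L_P/\partial u_l)\cdot \deg P\cdot z_l^{\deg P-1}$ then yields $|\partial L_P/\partial z_l|\leq C_k\deg P\cdot \rho^{4\deg P-1}$. Summing over $P$ and using that $n$ times the number of irreducibles of degree $n$ is at most $q^n$,
\[
\sum_P |\partial L_P/\partial z_l(z)| \leq \frac{C_k}{\rho}\sum_{n\geq 1}(q\rho^4)^n = \frac{C_k\,q\rho^3}{1-q\rho^4}\ll_k \frac{1}{\sqrt q},
\]
since $q\rho^3=(1.1)^3/\sqrt q$ and $q\rho^4 \leq (1.1)^4/2$. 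Combined with $\prod_{P'\neq P}|L_{P'}(z)|\leq \exp\sum_{P'}|L_{P'}(z)-1|\ll_k 1$, this gives $|\partial B/\partial z_l(z)|\ll_k 1/\sqrt q$. Since $\mathcal R$ is convex, integrating $\partial B$ along the line segment from $w$ to $z$ delivers the claimed Lipschitz estimate.

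The principal obstacle is securing the $1/\sqrt q$ decay in the Lipschitz constant. A direct Cauchy estimate for $\partial B$ on an enlarged polydisk $\{|z_l|<1.2/\sqrt q\}$ would yield $|\partial B/\partial z_l|\ll_k \sqrt q$, the wrong direction. The remedy is to apply Cauchy in the $u$ variable on a fixed polydisk of radius $\rho_0 < 1$ independent of $q$, and only then use the chain rule $u_l = z_l^{\deg P}$; the extra factor $\rho^{\deg P-1}$ from differentiating this exponentiation compounds over the prime sum precisely to produce the required $1/\sqrt q$ decay, and is impossible without the structural cancellation of the low-order terms in $L_P$.
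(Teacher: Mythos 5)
Your proof is correct, and it takes a genuinely different route from the paper's. Both proofs rest on the same structural observation (which the paper records via the $M \geq 2$ indexing in $C_P$, and you record as ``fourth-order vanishing'' of $L_P - 1$ in the $u$-variables), but the machinery built on top of it differs. The paper expands $L_P - 1$ explicitly, bounds the number of terms with a stars-and-bars count $\binom{M+k-1}{k-1}^2$, absorbs the polynomial growth into an exponential constant $D = 1.21\,e^{0.1}$, and then---because it needs $|C_P| \leq 1/2$ to take logarithms---splits $B$ into a finite ``small primes'' factor $J$ and an infinite ``large primes'' factor $K$, estimating the Lipschitz constant of each separately. You instead apply the Cauchy coefficient estimate to $L_P(u)$ on a \emph{fixed} $u$-polydisk of radius $\rho_0 < 1$ independent of $q$, so the coefficient bound $|a_m| \leq M_k \rho_0^{-|m|}$ comes for free, and then evaluate at $|u_l| \leq \rho^{\deg P}$, letting the order-four vanishing produce the $\rho^{4\deg P}$ decay; for the Lipschitz bound you differentiate once in $u$, pull through the chain rule $u_l = z_l^{\deg P}$, sum the derivative bounds over $P$, and integrate along the segment in the convex domain $\mathcal R$. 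This sidesteps both the $J/K$ decomposition and the log-of-product manipulations entirely, and your closing remark correctly diagnoses why it works: a direct Cauchy estimate for $\partial B/\partial z_l$ on an enlarged $z$-polydisk of radius $\sim 1/\sqrt q$ would lose a factor of $\sqrt q$, and it is precisely the $z_l^{\deg P - 1}$ arising from the chain rule, compounded over the prime sum, that converts this into the required $1/\sqrt q$ gain. The one small point worth spelling out if you write this up is the interchange of differentiation with the infinite product (i.e.\ $\partial_z B = \sum_P (\partial_z L_P)\prod_{P'\neq P} L_{P'}$), which is justified by the uniform convergence on compacta of the partial products together with the summability $\sum_P |\partial_z L_P| \ll_k q^{-1/2}$ you establish.
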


\begin{proof}
The proof is postponed to \cref{sec:analytic-B-convergence}.
\end{proof}

We may thus factor $A(z_1,\dots, z_{2k})$ as
$$A(z_1, \dots, z_{2k}) = B(z_1, \dots, z_{2k}) \prod_{i=1}^k \prod_{j=k+1}^{2k} \zeta_q(z_iz_j),$$
where this equation holds when both functions are defined.

By the multivariate version of Cauchy's integral formula, we have
$$ \mathbb{E} \Big | \sum_{F \in \cM_N} f(F) \Big|^{2k} = \frac{1}{(2\pi i)^{2k}}\int_{|z_{2k}|=\epsilon_{2k}}\cdots \int_{|z_{1}|=\epsilon_1} \frac{A(z_1, \dots, z_{2k})}{z_1^{N+1}\cdots z_{2k}^{N+1}} \, dz_1\cdots dz_{2k},$$
where $0 < \epsilon_i < q^{-1}$, so that $A$ converges on the closed polydisk with radii $\epsilon_i$.

\begin{proposition}\label{prop:integral_representation_of_2kth_moment} Let $c_i \in (1/4,3/4)$ for $i \in \{2,\dots,2k\}$.
We have the following equality:
\begin{align} \label{eqn:integral-repn}
\mathbb{E} \Big | \sum_{F \in \cM_N} f(F) \Big|^{2k} = \frac{N^{(k-1)^2} }{(2\pi)^{2k-1}}q^{kN}\int_{-\pi N}^{\pi N}\cdots \int_{-\pi N}^{\pi N} &\frac{e^{-(c_2+i\theta_2 + \cdots + c_{2k}+i\theta_{2k})}}{\prod_{i=2}^k\prod_{j=k+1}^{2k} N\left(1-e^{(c_i+i\theta_i + c_j+i\theta_j)/N}\right)}\\
\times &\prod_{j=k+1}^{2k}\frac{d\theta_j}{N\left(1-e^{(c_j+i\theta_j)/N}\right)} \nonumber \\
\times &B\left(q^{-1/2}, q^{-1/2}e^{(c_2 + i \theta_2)/N}, \dots, q^{-1/2}e^{(c_{2k}+i\theta_{2k})/N}\right) \nonumber \\
\times &\ d\theta_2\cdots d\theta_k. \nonumber
\end{align}
\end{proposition}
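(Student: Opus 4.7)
The plan is to apply the multivariate Cauchy integral formula to the factorization $A(z)=B(z)\prod_{i,j}\zeta_q(z_iz_j)$, deform the contour onto a polydisk of radius just above $q^{-1/2}$, perform the substitution $z_j=q^{-1/2}e^{(c_j+i\theta_j)/N}$, and exploit a scaling symmetry of the integrand to collapse one angular integration.

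First I would start from the Cauchy representation
\[
\mathbb{E}\Bigl|\sum_{F\in\cM_N}f(F)\Bigr|^{2k}=\frac{1}{(2\pi i)^{2k}}\oint\cdots\oint\frac{A(z)}{z_1^{N+1}\cdots z_{2k}^{N+1}}\,dz_1\cdots dz_{2k}
\]
on a polydisk $|z_j|=\epsilon_j<q^{-1}$. After the substitution $z_j=q^{-1/2}e^{(c_j+i\theta_j)/N}$ with $\theta_j\in[-\pi N,\pi N]$, the Jacobian transforms as $dz_j/z_j^{N+1}=(i/N)q^{N/2}e^{-(c_j+i\theta_j)}\,d\theta_j$ and each $\zeta_q(z_iz_j)$ becomes $(1-e^{(c_i+c_j+i\theta_i+i\theta_j)/N})^{-1}$. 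The new contour lies outside the convergence polydisk of $A$ but inside $\{|z_j|<1.1/\sqrt q\}$ where $B$ is holomorphic by \cref{prop:convergence_of_B}, so this step requires deforming past the pole divisors $\{qz_iz_j=1\}$ of $\zeta_q$.

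Next I would exploit a scaling symmetry. The $2k$-dimensional integrand is invariant under $z_i\mapsto\lambda z_i$ for $i\le k$ and $z_j\mapsto z_j/\lambda$ for $j>k$: each $\zeta_q(z_iz_j)$ and each Euler prefactor $1-z_i^{\deg P}z_j^{\deg P}$ of $B$ depends only on $z_iz_j$, the inner sums of $B$ are supported on tuples with $\sum_{i\le k}m_i=\sum_{j>k}m_j$, and the $\lambda$-contributions from $\prod z_j^{N+1}$ and $\prod dz_j$ cancel. Restricting to $\lambda=e^{i\alpha}$ produces a continuous $S^1$-action $\theta_i\mapsto\theta_i+\alpha N$ ($i\le k$), $\theta_j\mapsto\theta_j-\alpha N$ ($j>k$). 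The unit-Jacobian change of variables $\eta_1=\theta_1$, $\eta_j=\theta_j-\theta_1$ for $2\le j\le k$, $\eta_j=\theta_j+\theta_1$ for $j>k$, combined with $2\pi N$-periodicity, then makes the integrand independent of $\eta_1$. The trivial $\eta_1$-integral over $[-\pi N,\pi N]$ contributes a factor of $2\pi N$, and the remaining $(2k-1)$-dimensional integral is evaluated on the slice $\theta_1=0$. Choosing the free parameter $c_1=0$ yields $z_1=q^{-1/2}$, matching the first argument of $B$ in the proposition.

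Collecting prefactors is then routine: $(2\pi i)^{-2k}i^{2k}=(2\pi)^{-2k}$, a $q^{kN}$ from $q^{N/2}$ per variable, $N^{-2k}$ from the Jacobian, $2\pi N$ from the $\eta_1$-integral, and an $N^{k^2}$ obtained by multiplying each of the $k^2$ factors $(1-e^{\cdots})^{-1}$ by $N$ in both numerator and denominator together give $q^{kN}N^{(k-1)^2}/(2\pi)^{2k-1}$. At $z_1=q^{-1/2}$ the $k$ factors $\zeta_q(z_1z_j)$ for $j>k$ simplify to $(1-e^{(c_j+i\theta_j)/N})^{-1}$, matching the inner product in the proposition, while the remaining $(k-1)k$ factors for $2\le i\le k$ provide the outer denominator product. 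The hardest part will be justifying the contour deformation across the pole divisors of $A$; a clean bypass is to first establish the identity for sufficiently negative $c_j$, where the contour $|z_j|=q^{-1/2}e^{c_j/N}<q^{-1}$ lies inside the convergence disk of $A$ and Cauchy's formula applies directly, and then extend to $c_j\in(1/4,3/4)$ by analytic continuation in each $c_j$, using that both sides remain analytic along the path because $c_i+c_j>0$ keeps the integrand away from singularities.
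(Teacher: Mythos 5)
Your proposal identifies the crucial scaling symmetry of the integrand, but it applies it in the wrong order, and this leaves the central analytic step unjustified.

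The paper's proof does not parametrize the contours and then quotient by the $S^1$ action. Instead it makes the genuine (complex) linear substitutions $z_j\mapsto z_1^{-1}q^{-1}z_j$ for $j>k$ and $z_i\mapsto z_1z_i$ for $2\le i\le k$ while the contours are still tiny circles inside $|z_l|<q^{-1}$. The point of doing this \emph{before} parametrizing is that the substitutions transform each pole divisor $\{qz_iz_j=1\}$ of $\prod\zeta_q(z_iz_j)$ into $\{z_iz_j=1\}$ or $\{z_j=1\}$: the $q$-dependence of the poles is eliminated. After integrating out the now-trivial $z_1$, the remaining $(2k-1)$-dimensional integrand is holomorphic on a fixed polydisk near $1$, so the small circles can be expanded to radii $e^{\pm c_l/N}$ by Cauchy's theorem without ever approaching a singularity. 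Your version keeps the poles at $\{qz_iz_j=1\}$ and moves the contour from $|z_j|<q^{-1}$ out to $|z_j|\approx q^{-1/2}$. That deformation necessarily sweeps across those divisors, and you correctly identify this as the difficulty, but the proposed bypass does not work.

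The analytic continuation in the $c_l$ fails for a concrete reason. With $c_1=0$ fixed, the singularity of $\zeta_q(z_1z_j)$ sits at $z_j=q^{-1/2}$, i.e.\ at $c_j=0$. Your continuation starts from $c_j<-\tfrac{N}{2}\log q$ (so that the contour is inside $|z_j|<q^{-1}$) and ends at $c_j\in(1/4,3/4)$, so the path must pass through $c_j=0$. More generally, at the start every $c_i+c_j$ is very negative and at the end every $c_i+c_j$ is positive, so each pole divisor $\{c_i+c_j=0\}$ is crossed. At such a crossing the $\theta$-integral does not extend holomorphically: the integrand acquires a pole on the integration contour, and the value of the iterated integral jumps by a residue. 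The domain $\{c_i+c_j\neq 0\}$ is disconnected, so there is no path from the easy region to the target region along which both sides remain analytic; the phrase ``$c_i+c_j>0$ keeps the integrand away from singularities'' is true on the target component but not along the continuation. (One can check directly in the $k=2$ case that the $(2k-1)$-dimensional contour integral with $\prod\zeta_q$ factors normalized in your way, taken with all radii on the far side of the pole, evaluates to something different from $\mathcal{S}_2(N)=N+1$ — the residues do not cancel — so this is a genuine obstruction, not a technicality.)

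Everything else in your sketch (the Jacobian of $z_j=q^{-1/2}e^{(c_j+i\theta_j)/N}$, the $\eta$-change of variables producing the $2\pi N$ factor, the bookkeeping of powers of $N$ and $q$, and the final form of the factors $N(1-e^{\cdots})^{-1}$) matches the paper's computation after its own change of variables. But the proof cannot proceed by parametrize-then-symmetrize: the substitutions $z_j\mapsto z_1^{-1}q^{-1}z_j$, $z_i\mapsto z_1z_i$ must be carried out first, precisely so that the pole structure is rendered $q$-independent and the later contour deformation never meets a singularity. That reordering is the step your argument is missing.
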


\begin{proof}
    The proof is postponed to \cref{sec:analytic-integral-repn}. The main idea is to use the factorization of $A$ in terms of $B$ and the zeta functions along with several substitutions to center the integrand around $z_1$. Finally, we plug in explicit parametrizations of the contour.
\end{proof}

\begin{remark}
    The integral \eqref{eqn:integral-repn} is similar to one considered by Heap--Lindqvist in Section 4.2 of \cite{HeapLindqvist-2016} when computing the moments of truncated characteristic polynomials with secular coefficients, and our proof proceeds similarly. For more discussion on connections with random matrix theory and secular coefficients, we refer the reader to works of Heap--Lindqvist \cite{HeapLindqvist-2016}, Conrey--Gamburd \cite{ConreyGamburd-2006},  Najnudel--Paquette--Simm \cite{NajnudelPaquetteSimm-2023}, and Gorodetsky \cite{Gorodetsky-2024}.   
\end{remark}

Now, we let $b_k(q) := B(q^{-1/2},\dots,q^{-1/2})$ so that $0 < b_k(q) \ll_k 1$ by \cref{prop:convergence_of_B}. Rewrite 
$$\mathbb{E} \Big | \sum_{F \in \cM_N} f(F) \Big|^{2k}=b_k(q) q^{kN}(I_k+J_k),$$
where
\begin{multline}
I_{k}= \frac{N^{(k-1)^2}}{(2\pi)^{2k-1}}\int_{-\pi N}^{\pi N} \cdots \int_{-\pi N}^{\pi N} \frac{e^{-(c_2+i\theta_2 + \cdots + c_{2k}+i\theta_{2k})}}{\prod_{i=2}^k\prod_{j=k+1}^{2k} N \left(1-e^{(c_i+i\theta_i + c_j+i\theta_j)/N}\right)} \\
\times \prod_{j=k+1}^{2k}\frac{d\theta_j}{N \left(1-e^{(c_j+i\theta_j)/N}\right)}\, d\theta_2\cdots d\theta_{2k} \nonumber
\end{multline}
and $$\displaystyle J_k= \frac{1}{b_k(q) q^{kN}} \mathbb{E} \Big | \sum_{F \in \cM_N} f(F) \Big|^{2k}-I_{k}.$$ 
We claim that $b_k(q) q^{kN} I_k$ is our main term.

\begin{proposition}\label{prop:magic_square_integral}
We have that $I_{k}= \mathcal{S}_k(N)$.
\end{proposition}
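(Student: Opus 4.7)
The plan is to mirror the proof of \cref{prop:fourth-magic-square} (the $k=2$ case). The key idea is to reverse the change of variables that produced the integral representation in \cref{prop:integral_representation_of_2kth_moment}, recasting $I_k$ as a multivariate Cauchy integral whose value is the coefficient of a specific monomial in a generating function, and then identifying that coefficient combinatorially with $\mathcal{S}_k(N)$.

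Concretely, I would first substitute $w_j = e^{(c_j + i\theta_j)/N}$ for $j = 2, \ldots, 2k$. Under this substitution $d\theta_j = (N/i)\,dw_j/w_j$, every $e^{-(c_j + i\theta_j)}$ becomes $w_j^{-N}$, every denominator factor $1 - e^{(c_i + c_j + i(\theta_i + \theta_j))/N}$ becomes $1 - w_i w_j$, and every factor $1 - e^{(c_j + i\theta_j)/N}$ becomes $1 - w_j$. A careful bookkeeping of the prefactor $N^{(k-1)^2}/(2\pi)^{2k-1}$ against the $(N/i)^{2k-1}$ from the differentials and the $N^{k^2}$ from the denominator yields
\begin{equation*}
I_k \;=\; \frac{1}{(2\pi i)^{2k-1}}\oint\cdots\oint \frac{dw_2\cdots dw_{2k}}{\prod_{j=2}^{2k} w_j^{N+1}\cdot \prod_{i=2}^{k}\prod_{j=k+1}^{2k}(1-w_iw_j)\cdot \prod_{j=k+1}^{2k}(1-w_j)},
\end{equation*}
where each $w_j$-contour encircles the origin.

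From here I would proceed exactly as in the proof of \cref{prop:fourth-magic-square}: expand each $(1-w_iw_j)^{-1}=\sum_{m_{ij}\geq 0}(w_iw_j)^{m_{ij}}$ and $(1-w_j)^{-1}=\sum_{m_{1j}\geq 0}w_j^{m_{1j}}$ as geometric series (after deforming to a common domain of convergence) and apply the Cauchy integral formula termwise. This extracts the coefficient of $w_2^N\cdots w_{2k}^N$ in the resulting multivariable series, which by collecting exponents counts the nonnegative integer matrices $(m_{ij})_{1\leq i,j\leq k}$ satisfying the column constraints $\sum_{i=1}^{k}m_{ij} = N$ for $j = k+1,\ldots,2k$ and the row constraints $\sum_{j=k+1}^{2k}m_{ij} = N$ for $i = 2,\ldots,k$. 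The remaining row sum $\sum_j m_{1j} = N$ is automatic by balancing $\sum_{i,j}m_{ij} = kN$ two ways, so the surviving matrices are precisely the $k\times k$ magic squares with magic constant $N$, giving $I_k = \mathcal{S}_k(N)$.

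The main technical step is arranging the $w_j$-contours consistently so that the geometric series converge and the residue at the origin captures the full integral, since the contours inherited from the substitution satisfy $|w_j| = e^{c_j/N} > 1$ while the series require $|w_i w_j|<1$ and $|w_j|<1$. A deformation avoiding the hypersurfaces $w_i w_j = 1$ and $w_j = 1$ is needed; this mirrors the analogous step in the proof of \cref{prop:fourth-magic-square}, and once it is in place the combinatorial identification with $\mathcal{S}_k(N)$ follows immediately.
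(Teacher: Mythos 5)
Your reversal of the change of variables is correct, and the bookkeeping of $N$-powers checks out: the prefactor $N^{(k-1)^2}$, the $N^{2k-1}$ from the differentials $d\theta_j = (N/i)\,dw_j/w_j$, and the $N^{k^2}$ from the denominator products cancel exactly, giving the contour integral you wrote. From there, expanding the geometric series and reading off the coefficient of $w_2^N\cdots w_{2k}^N$ as a count of nonnegative-integer matrices with prescribed row and column sums (one row being automatic by balancing) indeed produces $\mathcal{S}_k(N)$. This is the same combinatorial identification the paper makes in \cref{sec:analytic-magic-square}, just packaged differently: the paper shifts to $s_j = -c_j + i\theta_j$, interchanges sum and integral by Fubini, and then evaluates each one-dimensional integral explicitly, observing that every term with some $\alpha_i \neq N$ or $\beta_j \neq N$ contributes a factor of $\sin((\alpha_i - N)\pi) = 0$. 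Your version is more compact and makes the parallel with \cref{prop:fourth-magic-square} more transparent.

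The gap is in your final paragraph. You correctly notice that with $c_j \in (1/4,3/4)$ the substitution puts each contour on $|w_j| = e^{c_j/N} > 1$, so the simple poles on $w_j = 1$ and $w_iw_j = 1$ lie \emph{inside} the contour. But the proposed fix --- a deformation that ``mirrors the analogous step in the proof of \cref{prop:fourth-magic-square}'' --- does not hold up. There is no such deformation in \cref{prop:fourth-magic-square}: the radii $\epsilon < q^{-1}$ are already well inside the unit disk, the geometric series converge on the contour outright, and nothing needs to be moved. And one cannot deform a $w_j$-contour from $|w_j|>1$ to $|w_j|<1$ without crossing those pole hypersurfaces; doing so picks up their residues and changes the value of the integral. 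In fact, if every $|w_j|>1$ then the inner integral in any single variable $w_j$ with $j > k$ already vanishes: the integrand is $O(w_j^{-(N+k+1)})$ as $w_j \to \infty$, so the residue at infinity is zero and hence the sum of all finite residues (all of which are enclosed) is zero. With the radii as stated, this chain of reasoning yields $I_k = 0$, not $\mathcal{S}_k(N)$.

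The correct repair is a sign adjustment, not a deformation: the $c_j$ must be taken negative (equivalently, the parametrizing radii $e^{c_j/N}$ must be strictly less than $1$), consistent with the original small Cauchy contours. This is also what the paper's own argument in \cref{sec:analytic-magic-square} implicitly needs: the expansion $\tfrac{1}{1-e^{-(s_i+s_j)/N}} = \sum_{a\geq 0} e^{-a(s_i+s_j)/N}$ converges only when $\Re(s_i+s_j) = -(c_i+c_j) > 0$. Once the radii are inside the unit circle, the poles at $w_j = 1$ and $w_iw_j = 1$ lie outside the contour, your geometric-series expansion converges on the contour, and the coefficient-extraction argument closes with no further work.
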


\begin{proof}
See \cref{sec:analytic-magic-square} for a detailed proof and Proposition 1 of \cite{ConreyGamburd-2006} for a similar statement. Our proof more closely follows part of the argument for Theorem 2 in \cite{ConreyGamburd-2006}. 
\end{proof}

It remains to show that the error term $J_k$ is $o(\mathcal{S}_k(N))$ as $qN\to\infty$. Since ${\mathcal{S}_k(N)/ N^{(k-1)^2} \ll_k 1}$ (see the remarks after \cref{thm:steinhaus}), it suffices to estimate the quantity
\begin{align}
\label{error_integral_I_prime}
   \widetilde{J_k} := \frac{1}{(2\pi)^{2k-1}}\int_{-\pi N}^{\pi N}&\cdots \int_{-\pi N}^{\pi N} \frac{e^{-(c_2+i\theta_2 + \cdots + c_{2k}+i\theta_{2k})}}{\prod_{i=2}^k\prod_{j=k+1}^{2k} N\left(1-e^{(c_i+i\theta_i + c_j+i\theta_j)/N}\right)}\\
\times &\prod_{j=k+1}^{2k}\frac{d\theta_j}{N\left(1-e^{(c_j+i\theta_j)/N}\right)} \nonumber \\
\times &\left( B(q^{-1/2}, q^{-1/2}e^{(c_2 + i \theta_2)/N}, \dots, q^{-1/2}e^{(c_{2k}+i\theta_{2k})/N}) - b_k(q) \right) \nonumber \\
\times &\ d\theta_2\cdots d\theta_{2k} \nonumber,
\end{align}
and show that $\widetilde{J_k}$ converges to 0 as $qN\to \infty$. 

\begin{proposition} \label{prop:error_term_estimation}
Fix $k \in \N$ with $k \geq 2$. For $2 \leq X \leq N/10$ and $10X \leq Y \leq N$ (recall the assumption $N \geq 20$), 
\begin{align*}
|\widetilde{J_k}| &\ll_k \frac{1}{q} \frac{(\log N)^{2k-1}}{X^{1-1/k}} + \frac{1}{q} (\log N)^{2k-2} \frac{X}{N} + \frac{1}{q} \Big( \frac{X}{Y} \Big)^{k-1} + \frac{1}{q} \frac{Y}{N} \left( \log N \right)^k.
\end{align*}
\end{proposition}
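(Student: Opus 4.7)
My plan is to combine the Lipschitz bound on $B$ from \cref{prop:convergence_of_B} with elementary denominator estimates, then partition the integration domain $[-\pi N,\pi N]^{2k-1}$ according to the parameters $X$ and $Y$ and bound each piece. First I will establish pointwise bounds. The Lipschitz inequality applied with $w=(q^{-1/2},\dots,q^{-1/2})$, together with $|e^{(c_r+i\theta_r)/N}-1|\ll(|\theta_r|+1)/N$ (which holds since $(c_r+i\theta_r)/N$ has bounded modulus for $|\theta_r|\leq\pi N$), yields
$$\big|B(q^{-1/2},q^{-1/2}e^{(c_2+i\theta_2)/N},\dots)-b_k(q)\big|\ll_k\tfrac{1}{q}\min\bigl(1,\max_{2\leq r\leq 2k}\tfrac{|\theta_r|+1}{N}\bigr).$$
For the denominator, the identity $|1-e^{u+iv}|^2=(e^u-1)^2+4e^u\sin^2(v/2)$ together with $|\sin(\phi/2)|\geq|\phi|/\pi$ for $|\phi|\leq\pi$ gives $|N(1-e^{(c+it)/N})|\gg 1+|t|$ uniformly for $c\in(1/4,3/4)$ and $|t|\leq\pi N$, and analogously $|N(1-e^{(c_i+i\theta_i+c_j+i\theta_j)/N})|\gg 1+|\theta_i+\theta_j|$.

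Next I partition into the bulk $D_1=\{\max_r|\theta_r|\leq X\}$, the intermediate shell $D_2=\{X<\max_r|\theta_r|\leq Y\}$, and the tail $D_3=\{\max_r|\theta_r|>Y\}$, writing the contributions to $\widetilde{J_k}$ as $J^{(1)},J^{(2)},J^{(3)}$. On $D_1$ I apply $|B-b_k(q)|\ll X/(qN)$, reducing to
$$\mathcal{I}_1:=\int_{[-X,X]^{2k-1}}\frac{d\Theta}{\prod_{i,j}(1+|\theta_i+\theta_j|)\prod_j(1+|\theta_j|)}\ll_k(\log N)^{2k-2}.$$
I would prove this by iteratively integrating one variable at a time using the elementary estimate $\int_{-X}^{X}\!du/[(1+|u-a|)(1+|u-b|)]\ll\log(2+X)/(1+|a-b|)$, which generates at most one $\log N$ per integrated variable while preserving decay in the remaining variables. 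The $k^2$ denominator factors and $2k-1$ integrations interact so that several of the residual integrations yield $O(1)$ rather than another $\log$ (as one verifies explicitly in the $k=2$ computation sketched before \cref{prop:magic_square_integral}). This yields $J^{(1)}\ll_k(X/N)(\log N)^{2k-2}/q$, the second stated term.

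On $D_2\cup D_3$ I would further subdivide by the index $r^*$ achieving $\eta:=\max_r|\theta_r|$ and by whether each remaining $|\theta_r|$ lies below or above $X$. Within each shell $|\theta_{r^*}|\asymp\eta$ apply the stronger Lipschitz bound $|B-b_k(q)|\ll\eta/(qN)$; after a secondary split on the relative magnitudes of the other $|\theta_r|$'s, the $k$ denominator factors coupling $\theta_{r^*}$ to the other variables each contribute decay $\gtrsim\eta$. This produces a shell contribution of order $\eta^{1-k}\cdot(\eta/N)\cdot(\log N)^{m}/q$ for some residual $m\leq 2k-2$. Integrating $\eta$ over $(Y,\pi N)$ yields the fourth term $Y(\log N)^k/(qN)$, while integrating over $(X,Y)$ yields the third term $(X/Y)^{k-1}/q$. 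The first term $(\log N)^{2k-1}/(qX^{1-1/k})$ arises from subregions of $D_2$ where several coordinates are simultaneously of intermediate size: here one cannot use all $k$ factors of decay in a single variable, and instead a H\"older-type inequality applied across the variables (balancing the $k^2$ denominator factors against the $2k-1$ integrations with exponent $1/k$) produces the nonstandard decay $X^{-(k-1)/k}$.

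The hard part will be the delicate bookkeeping in $D_2$ and $D_3$: tracking which subregion generates which power of $\log N$, avoiding double-counting across overlapping splits on relative magnitudes, and most notably extracting the unusual exponent $X^{-(k-1)/k}$ via a H\"older-type inequality rather than a direct iterated integration. Summing $J^{(1)}+J^{(2)}+J^{(3)}$ will then give the claimed four-term bound.
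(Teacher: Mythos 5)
Your decomposition of $[-\pi N,\pi N]^{2k-1}$ into concentric cubes $D_1,D_2,D_3$ indexed by $\max_r|\theta_r|$ is geometrically misaligned with the singularity structure of the integrand, and the plan does not recover the stated bound. The crucial point is that the factors $|F(z_i+z_j)|$ are large precisely on the \emph{anti-diagonal} $\theta_i\approx-\theta_j$, which is a thin tube of codimension one passing through all scales of $|\theta_r|$, not a neighbourhood of the origin. This is why the paper partitions by the size of $|z_i+z_j|$ (the set $\mathfrak{S}(X)$) and only secondarily by $|z_j|$ (the set $\mathfrak{S}(X,Y)$), rather than by $|\theta_r|$. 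Your assertion that in a shell $|\theta_{r^*}|\asymp\eta$ "the $k$ denominator factors coupling $\theta_{r^*}$ to the other variables each contribute decay $\gtrsim\eta$" is simply false on the tube: if $\theta_i\approx-\theta_{r^*}$, then $|F(z_i+z_{r^*})|\asymp 1$ no matter how large $\eta$ is. You do mention a "secondary split on relative magnitudes," but making that split precise would in effect reconstruct the paper's $\mathfrak{S}(X)$ geometry, and the heuristic shell bookkeeping you propose does not survive it.

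More concretely, even granting the most favourable version of your shell estimate (H\"older over the $\theta_i$'s contributing $O(1)$, the lone factor $|F(z_{r^*})|$ contributing $1/\eta$, and the remaining $\theta_j$'s contributing $\log\eta$ each), a shell at level $\eta$ yields a contribution of order $(\eta/(qN))\cdot(\log\eta)^{k-1}/\eta=(\log N)^{k-1}/(qN)$, and summing over $\sim N$ shells gives roughly $(\log N)^{k-1}/q$, which does \emph{not} decay in $N$ and is therefore useless for \cref{thm:steinhaus}. Your attribution of the four terms to shell ranges is also inconsistent with their actual origin in the paper: the term $(X/Y)^{k-1}/q$ comes from the anti-diagonal tube $\mathfrak{S}(X)\setminus\mathfrak{S}(X,Y)$ where all $|\theta_j|>Y$ (not from shells between $X$ and $Y$), and the term $Y(\log N)^k/(qN)$ comes from the bulk region $\mathfrak{S}(X,Y)$ (not from shells above $Y$). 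The unusual exponent $X^{-(1-1/k)}$ in the first term is likewise generated in the paper by splitting $|F(z_i+z_j)|=|F|^{1-1/k}\cdot|F|^{1/k}$ for those pairs with $|z_i+z_j|>X$, pulling $X^{-(1-1/k)}$ out of the first factor and integrating the second via H\"older (Lemma 9.3's weak 3-partitions of index pairs), which is an argument organized around $|z_i+z_j|$, not around intermediate sizes of $|\theta_r|$. To repair the proof you would need to replace your cube partition by the tube-based partition $R_1=[-\pi N,\pi N]^{2k-1}\setminus\mathfrak{S}(X)$, $R_2=\mathfrak{S}(X)\setminus\mathfrak{S}(X,Y)$, $R_3=\mathfrak{S}(X,Y)$, which is the substance of the paper's argument.
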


\begin{proof}
See \cref{sec:analytic-error-integrals} for the proof. The idea is to split up the region of integration.  The difference of $B$-terms will contribute the factor of $\frac{1}{q}$. We will show that if any of the $\theta_i$ or $\theta_j$ is large, i.e, not $o(N)$, the integral over that part of the region converges to $0$ as $N \rightarrow \infty$. On the other hand, if we have $\theta_i, \theta_j = o(N)$ for all $i,j$, the difference $B(\cdot)-b_k(q)$ will ensure convergence to $0$ even for fixed $q$.
\end{proof}

Choosing
$$
X = (N \log N)^{\frac{k}{2k-1}} 
\quad \text{ and } \quad 
Y = \frac{N^{1-1/k}}{X} = \frac{N^{\frac{k^2+k-1}{(2k-1)k}}}{(\log N)^{\frac{k}{2k-1}}},
$$
we obtain the final error estimate
\begin{align*}
\frac{1}{q} \Big( \frac{(\log N)^{4k^2-5k+2}}{N^{k-1}}\Big)^{\frac{1}{2k-1}} + \frac{1}{q} \Big( \frac{(\log N)^{2k^2-2k}}{N^{\frac{(k-1)^2}{k}}}\Big)^{\frac{1}{2k-1}} \ll_k \frac{1}{q}\frac{(\log N)^{k}}{N^{\frac{1}{2} - \frac{3k-2}{4k^2-2k}} }.
\end{align*}
A short computation shows that
\begin{align*}
    b_k(q) &= B(q^{-1/2}, \dots, q^{-1/2})\\
    &= \prod_{P} \sum_{m=0}^\infty \Big(\sum_{m_1+\cdots+m_k=m} \Big(\frac{1}{\sqrt{q}}\Big)^{m\deg P} \Big)^2 \prod_{i=1}^k \prod_{j=k+1}^{2k} \Big( 1-\Big(\frac{1}{q}\Big)^{\deg P}\Big).
\end{align*}
Using stars and bars for the inner sum, and noting that the second double product no longer depends on $i,j$, this reduces to
$$\prod_{P}\sum_{m=0}^\infty \Big(\binom{m+k-1}{k-1}^2 \Big(\frac{1}{\sqrt{q}}\Big)^{2m\deg P} \Big) \Big( 1-\Big(\frac{1}{q}\Big)^{\deg P}\Big)^{k^2}.$$
Thus,
$$b_k(q) = \prod_{P \in \cM} \Big( 1-\Big(\frac{1}{q}\Big)^{\deg P}\Big)^{k^2}\sum_{m=0}^\infty \binom{m+k-1}{k-1}^2 \Big(\frac{1}{q}\Big)^{m\deg P} .$$
This finishes the proof of \cref{thm:steinhaus} assuming \cref{prop:convergence_of_B,prop:integral_representation_of_2kth_moment,prop:magic_square_integral,prop:error_term_estimation}. 
\end{proof}
The last four sections will establish the remaining propositions. 

\section{Proof of \cref{prop:convergence_of_B}: Convergence of $B$ series}
\label{sec:analytic-B-convergence}

Recall that
$$
B(z_1,\dots,z_{2k}) := \prod_{P} \Big(\msum_{\substack{m_1+\cdots+m_k\\ = m_{k+1}+\cdots+m_{2k}}} (z_1^{m_1}\cdots z_{2k}^{m_{2k}})^{\deg P}\Big) \prod_{i = 1}^k \prod_{j = k+1}^{2k} (1-z_i^{\deg P}z_j^{\deg P}).
$$
Our proof proceeds by first looking at the local factors (i.e.\ for fixed $P$). Write $\ell := \deg P$ and let
$$
F_{\ell}(z_1,\dots,z_{2k}) := \msum_{\substack{m_1+\cdots+m_{k} \\ =m_{k+1}+\cdots+m_{2k}}}z_1^{m_1 \ell}\cdots z_{2k}^{m_{2k}\ell}
$$
and $$
G_{\ell}(z_1,\dots,z_{2k}) :=\prod_{i=1}^k\prod_{j=k+1}^{2k}(1-z_i^{\ell} z_j^{\ell}).
$$
Expanding the sums and products, we get
$$
F_{\ell}(z_1,\dots,z_{2k})=1+\sum_{i=1}^k\sum_{j=k+1}^{2k}z_i^{\ell} z_j^{\ell} + \sum_{M=2}^\infty \Big(\msum_{\substack{m_1+\cdots+m_k \\ =m_{k+1}+\cdots+m_{2k} \\ =M}} (z_1^{m_1}\cdots z_{2k}^{m_{2k}})^{\ell}\Big)
$$
and $$G_{\ell}(z_1,\dots,z_{2k})=1-\sum_{i=1}^k\sum_{j=k+1}^{2k}z_i^{\ell} z_j^{\ell} 
+ \sum_{L=2}^{k^2}\Big(\msum_{\substack{n_1+\cdots+n_k \\=n_{k+1}+\cdots+n_{2k}\\ =L \\ n_i\leq k}} c(n_1,\dots, n_{2k})(z_1^{n_1}\cdots z_{2k}^{n_{2k}})^{\ell}\Big)
$$
for some constants $c(n_1,\dots,n_{2k})$ depending on $n_1,\dots,n_{2k}$.
Consequently,
$$
(F_{\ell}\cdot G_{\ell})(z_1,\dots,z_{2k}) = 1 + \dsum_{\substack{M, L\\ M+L\geq 2\\ L\leq k^2}}\Big(\msum_{\substack{m_1' + \cdots + m_{k}' \\ = m_{k+1}' + \cdots +m_{2k}' \\ = M+L}} c'(m_1', \dots, m_{2k}') (z_1^{m_1'}\cdots z_{2k}^{m_{2k}'})^{\ell}\Big)
$$
for some other constants $c'(m_1',\dots,m_{2k}')$. These constants are uniformly bounded in $k$. This local factor of $B$ can be rewritten as 
$
1+ C_P(z),
$
where 
\begin{equation} \label{eq:cpz}
    C_P(z)= \sum_{M=2}^\infty \msum_{\substack{m_1+\cdots+m_k \\ =m_{k+1}+\cdots+m_{2k} \\ =M}} c'(m_1,\dots,m_{2k}) z_1^{m_1 \ell}\cdots z_{2k}^{m_{2k} \ell}.
\end{equation}
For simplicity, fix $D = 1.21  e^{0.1} > 1$ (this choice is somewhat arbitrary). We now give bounds on the local factor.

\begin{lemma}{\label{lem:c_estimates}}
For $z\in\mathcal{R} = \{ (z_1,\dots,z_{2k}) \in \C^{2k} : |z_i| < 1.1 q^{-1/2} \text{ for } 1 \leq i \leq 2k \}$, we have
$$
|C_P(z)| \ll_k {\Big( \frac{D}{q} \Big)}^{2 \ell}
$$
and
$$
|C_P(z)- C_P(w)| \ll_k \ell ||z-w||_{\infty} \sqrt{q} {\Big( \frac{D}{q} \Big)}^{2\ell}.
$$
\end{lemma}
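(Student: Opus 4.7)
The plan is to prove both estimates simultaneously by expanding $C_P(z)$ termwise on the polydisk $|z_i|\leq 1.1/\sqrt{q}$ and bounding the resulting series by a geometric majorant. Two inputs will be used throughout. First, the coefficients $c'(m_1,\dots,m_{2k})$ in \eqref{eq:cpz} satisfy $|c'(m_1,\dots,m_{2k})| \ll_k 1$: each coefficient of $G_\ell$ is at most $2^{k^2}$ in absolute value (from expanding the product of $k^2$ factors $(1-z_i^\ell z_j^\ell)$), and $F_\ell$ has $\{0,1\}$-valued coefficients, so the convolution $F_\ell \cdot G_\ell$ produces coefficients bounded by a $k$-dependent constant. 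Second, for fixed $M\geq 2$, the number of tuples satisfying $\sum_{i\leq k}m_i = \sum_{i>k}m_i = M$ is exactly $\binom{M+k-1}{k-1}^{2}$.

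For the first bound, I would observe that on $\mathcal{R}$ each monomial $z_1^{m_1\ell}\cdots z_{2k}^{m_{2k}\ell}$ has absolute value at most $(1.1/\sqrt{q})^{2M\ell} = (1.21/q)^{M\ell}$, since the total degree equals $2M\ell$. Summing and extracting the smallest term gives
\begin{equation*}
|C_P(z)| \ll_k \sum_{M=2}^{\infty}\binom{M+k-1}{k-1}^{2}\Big(\frac{1.21}{q}\Big)^{M\ell} = \Big(\frac{1.21}{q}\Big)^{2\ell}\sum_{M=2}^{\infty}\binom{M+k-1}{k-1}^{2}\Big(\frac{1.21}{q}\Big)^{(M-2)\ell}.
\end{equation*}
Since $q \geq 2$ forces $(1.21/q)^\ell \leq 0.605$ for every $\ell \geq 1$, the residual series is bounded by a constant $O_k(1)$ uniformly in $\ell$. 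As $D = 1.21 e^{0.1} > 1.21$, this yields $|C_P(z)| \ll_k (D/q)^{2\ell}$.

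For the Lipschitz bound, I would apply the $2k$-fold telescoping identity
\begin{equation*}
\prod_{i=1}^{2k} z_i^{n_i} - \prod_{i=1}^{2k} w_i^{n_i} = \sum_{j=1}^{2k}\Big(\prod_{i<j}z_i^{n_i}\Big)\big(z_j^{n_j}-w_j^{n_j}\big)\Big(\prod_{i>j}w_i^{n_i}\Big),
\end{equation*}
combined with the elementary estimate $|a^n - b^n| \leq n\max(|a|,|b|)^{n-1}|a-b|$, to obtain
\begin{equation*}
\Big|\prod_{i=1}^{2k} z_i^{m_i\ell} - \prod_{i=1}^{2k} w_i^{m_i\ell}\Big| \leq 2M\ell\,(1.1/\sqrt{q})^{2M\ell-1}\,\|z-w\|_\infty,
\end{equation*}
using $\sum_i m_i\ell = 2M\ell$. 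Extracting one factor of $(1.1/\sqrt{q})^{-1} = \sqrt{q}/1.1$ and substituting into the sum defining $C_P(z)-C_P(w)$ leaves
\begin{equation*}
|C_P(z)-C_P(w)| \ll_k \ell\sqrt{q}\,\|z-w\|_\infty \sum_{M=2}^{\infty} M\binom{M+k-1}{k-1}^{2}\Big(\frac{1.21}{q}\Big)^{M\ell}.
\end{equation*}
The additional linear factor of $M$ does not affect geometric convergence, so factoring out $(1.21/q)^{2\ell}$ exactly as before closes the estimate.

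The main bookkeeping challenge is tracking the separate factors of $\sqrt{q}$ and $\ell$ produced by the derivative estimate and verifying that the base of the residual geometric series remains comfortably below $1$ for all $q \geq 2$. No substantive conceptual obstacle is expected: the slack between $1.21$ and $D = 1.21e^{0.1}$ provides ample room to absorb both the $k$-dependent constants and the polynomial-in-$M$ prefactors coming from $\binom{M+k-1}{k-1}^{2}$ and the additional factor of $M$ in the Lipschitz sum.
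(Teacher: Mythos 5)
Your proof is correct and reaches the required estimates, but takes a somewhat different route from the paper in both halves, and both differences are worth noting.

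For the first bound, after arriving at the intermediate majorant $\sum_{M \geq 2} \binom{M+k-1}{k-1}^2 (1.21/q)^{M\ell}$ (same as the paper), you factor out $(1.21/q)^{2\ell}$ directly and observe that the remaining series is $O_k(1)$ uniformly in $\ell \geq 1$ and $q \geq 2$ because $1.21/q \leq 0.605 < 1$. The paper instead uses the ad hoc decomposition $D = 1.21 e^{0.1}$, writes $(1.21/q)^{M\ell} = e^{-0.1M\ell}(D/q)^{M\ell}$, and absorbs the polynomial prefactor via $M^{2(k-1)}e^{-0.1M\ell} \ll_k 1$. Your version is cleaner, actually yields the stronger constant $1.21$ in place of $D$, and avoids introducing $D$ entirely at this stage; the paper presumably prefers $D$ only because the same $D$ is reused in later lemmas.

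For the Lipschitz bound, you apply a single $2k$-fold telescoping across the coordinates with exponents $n_i = m_i \ell$, together with $|a^n - b^n| \leq n \max(|a|,|b|)^{n-1}|a-b|$ applied coordinate-wise, giving $\big|\prod z_i^{m_i\ell} - \prod w_i^{m_i\ell}\big| \leq 2M\ell (1.1/\sqrt{q})^{2M\ell - 1}\|z-w\|_\infty$ in one pass. The paper proceeds in two stages: it first peels off the $\ell$-th power by factoring $A^\ell - B^\ell$ with $A = \prod z_i^{m_i}$, $B = \prod w_i^{m_i}$, extracting a factor $\ell\,(1.21/q)^{M(\ell-1)}$, and then separately proves the unit-power estimate \eqref{eq:zwd} by a doubling induction on $k$. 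Your single-pass telescope is more economical and avoids the induction, and after extracting the stray factor of $(1.1/\sqrt{q})^{-1} = \sqrt{q}/1.1$ it recovers exactly the paper's intermediate sum $\ell\sqrt{q}\|z-w\|_\infty\sum_{M\geq 2} M\binom{M+k-1}{k-1}^2(1.21/q)^{M\ell}$, after which your geometric-extraction argument closes the bound as before. Your justification that the coefficients $c'(m_1,\dots,m_{2k})$ are $O_k(1)$ (finitely many monomials in $G_\ell$, each with bounded coefficient, convolved against $\{0,1\}$-valued coefficients) is also a useful amplification of a fact the paper asserts without proof.
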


\begin{proof}
For $z\in\mathcal{R}$, we have
\begin{align} \label{eq:cp}
    |C_P(z)|&\ll_k \sum_{M=2}^\infty \Big(\msum_{\substack{m_1+\cdots+m_k \\ =m_{k+1}+\cdots+m_{2k} \\ =M}}1\Big)\Big(\frac{1.1}{\sqrt{q}}\Big)^{2M\ell} \nonumber \\
    &= \sum_{M=2}^\infty \binom{M+k-1}{k-1}^2 \Big(\frac{(1.1)^2}{q}\Big)^{M\ell} \\
    &\ll_k \sum_{M=2}^{\infty} M^{2(k-1)} \Big(\frac{1.21}{q}\Big)^{M\ell} \nonumber
\end{align}
where the last inequality follows since $\binom{M+k-1}{k-1}^2$ is a polynomial in $M$ of degree at most $2(k-1)$. Observe that $\big(\frac{1.21}{q}\big)^{M\ell} = e^{-0.1M\ell}\big(\frac{D}{q}\big)^{M\ell}$, and further for $M \geq 2$, 
$$
M^{2(k-1)}e^{-0.1M\ell} \leq M^{2(k-1)}e^{-0.1M} \ll_k 1.
$$
We conclude that 
\begin{align}{\label{eq:single_c}}
     \sum_{M=2}^{\infty} M^{2(k-1)} \Big(\frac{1.21}{q}\Big)^{M\ell} &\ll_k \sum_{M=2}^{\infty} \Big(\frac{D}{q}\Big)^{M\ell} \ll_k \Big( \frac{D}{q}\Big)^{2\ell},
\end{align}
where the last estimate comes from bounding the convergent geometric series by its first term. This proves the first estimate in the lemma.

Now, we prove the upper bound for the difference $|C_P(z)-C_P(w)|$. From the definition in \eqref{eq:cpz} and using that $|z_i|,|w_i|< 1.1 q^{-1/2}$, we find that
\begin{align} \label{eq:cpdiff}
|C_P(z)-C_P(w)|
&\ll_k
\sum_{M=2}^\infty \msum_{\substack{m_1+\cdots+m_k \\ =m_{k+1}+\cdots+m_{2k} \\ =M}} \big|(z_1^{m_1 }\cdots z_{2k}^{m_{2k}})^{\ell}  - (w_1^{m_1 }\cdots w_{2k}^{m_{2k}})^{\ell} \big|\nonumber \\
&\leq \ell \sum_{M=2}^\infty q^{-M(\ell-1)}(1.1)^{2M(\ell-1)} \msum_{\substack{m_1+\cdots+m_k \\ =m_{k+1}+\cdots+m_{2k} \\ =M}} \big|z_1^{m_1 }\cdots z_{2k}^{m_{2k}}  - w_1^{m_1}\cdots w_{2k}^{m_{2k}}\big|,
 \end{align}
 where we implicitly used the identity $A^n-B^n=(A-B)\sum_{k=0}^{n-1}A^{n-1-k}B^k$.

Now, we claim 
\begin{equation} \label{eq:zwd}
|z_1^{m_1 }\cdots z_{2k}^{m_{2k}}  - w_1^{m_1}\cdots w_{2k}^{m_{2k}}| < 2M \Big(\frac{1.1}{\sqrt{q}}\Big)^{2M-1}\|z-w\|_\infty.
\end{equation}
To see why, observe that when $k=1$, we can write
\begin{align*}
|z_1^{m_1}z_2^{m_2}-w_1^{m_1}w_2^{m_2}| &= |z_1^{m_1}z_2^{m_2}-z_1^{m_1}w_2^{m_2}+z_1^{m_1}w_2^{m_2}-w_1^{m_1}w_2^{m_2}| \\
&\leq |z_1|^{m_1}|z_2^{m_2}-w_2^{m_2}| + |w_2^{m_2}||z_1^{m_1}-w_1^{m_1}| \\
& < \Big(\frac{1.1}{\sqrt{q}}\Big)^{m_1}|z_2-w_2|m_2\Big(\frac{1.1}{\sqrt{q}}\Big)^{m_2-1} + \Big(\frac{1.1}{\sqrt{q}}\Big)^{m_2}|z_1-w_1|m_1\Big(\frac{1.1}{\sqrt{q}}\Big)^{m_1-1} \\
&\leq \|z-w\|_{\infty}(m_1+m_2) \Big(\frac{1.1}{\sqrt{q}}\Big)^{m_1+m_2-1}.
\end{align*}
Then, for any other $k\geq 1$, the claim follows by strong induction after rewriting 
\begin{multline*}
    z_1^{m_1 }\cdots z_{2k}^{m_{2k}}  - w_1^{m_1}\cdots w_{2k}^{m_{2k}} = z_1^{m_1 }z_3^{m_3}\cdots z_{2k-1}^{m_{2k-1}}(z_2^{m_2}z_4^{m_4}\cdots z_{2k}^{m_{2k}} - w_2^{m_2}w_4^{m_4}\cdots w_{2k}^{m_{2k}}) \\
    + w_2^{m_2}w_4^{m_4}\cdots w_{2k}^{m_{2k}}(z_1^{m_1}z_3^{m_3}\cdots z_{2k-1}^{m_{2k-1}} - w_1^{m_1}w_3^{m_3}\cdots w_{2k-1}^{m_{2k-1}}).
\end{multline*}

Thus, the estimate \eqref{eq:zwd} implies that \eqref{eq:cpdiff} is 
$$
\ll \ell\, \|z-w\|_\infty \sqrt{q}\sum_{M=2}^\infty M \binom{M+k-1}{k-1}^2 \Big(\frac{1.21}{q}\Big)^{M\ell}.
$$
At this point, we use the same manipulation as we did in going from \eqref{eq:cp} to \eqref{eq:single_c} to recover the estimate
$$
|C_P(z)-C_P(w)| \ll_k \ell ||z-w||_{\infty} \sqrt{q}\left(\frac{D}{q}\right)^{2\ell}.
$$
\end{proof}

Recall  by definition, $B(z) = \prod_{P } (1+C_P(z))$. By \cref{lem:c_estimates} and the observation ${D < 2 \leq q}$, if $\ell_0  \geq 1$ is sufficiently large depending only on $k$, then
\begin{equation} \label{eqn:ell_0}
|C_P(z)| \leq \frac{1}{2} \quad \text{whenever } \deg P \geq \ell_0. 
\end{equation}
For $\ell_0 \geq 1$, define
$$J(z) := \prod_{\substack{P \\ \deg P < \ell_0}}\big(1+C_P(z)\big), \qquad K(z) := \prod_{\substack{P \\ \deg P \geq \ell_0}}\big(1+C_P(z)\big)$$
so that $B(z) = J(z)K(z)$.

\begin{lemma}{\label{lem:jk_bounds}}
   Assume $\ell_0 = \ell_0(k)$ depends only on $k$ and  satisfies \eqref{eqn:ell_0}. For $z \in \mathcal{R}$,
    $$
    J(z) \ll_{k} 1, \qquad 
    K(z) \ll_{k} 1.
    $$
    In particular, $|B(z)| \ll_k 1$. 
\end{lemma}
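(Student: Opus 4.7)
The plan is to bound $|J(z)|$ and $|K(z)|$ via logarithms, reducing the whole statement to a master estimate $\sum_{P} |C_P(z)| \ll_k 1$ that is uniform over $q \ge 2$ and $z \in \mathcal{R}$. The crucial numerical input is that the constant $D = 1.21\,e^{0.1}$ fixed at the start of \cref{sec:analytic-B-convergence} satisfies $D^2 < 2$, so that $D^2/q \le D^2/2 < 1$ for every prime power $q \ge 2$.

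To prove the master estimate, I would combine the pointwise bound $|C_P(z)| \ll_k (D/q)^{2\deg P}$ from \cref{lem:c_estimates} with the standard counting bound that the number of monic irreducibles of degree exactly $\ell$ in $\mathbb{F}_q[t]$ is at most $q^\ell/\ell$. This yields
$$
\sum_P |C_P(z)| \;\ll_k\; \sum_{\ell \ge 1} \frac{q^\ell}{\ell}\Big(\frac{D}{q}\Big)^{2\ell} \;=\; \sum_{\ell \ge 1}\frac{(D^2/q)^\ell}{\ell} \;=\; -\log\!\Big(1 - \frac{D^2}{q}\Big) \;\ll_k\; 1,
$$
the final step being forced by $D^2/q < 1$.

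Given this, the bound on $J(z)$ is immediate from the elementary inequalities $|1+w| \le 1+|w|$ and $\log(1+x) \le x$: summing $\log|1 + C_P(z)|$ over the finite set $\{\deg P < \ell_0\}$ gives $\log|J(z)| \le \sum_{\deg P < \ell_0} |C_P(z)| \ll_k 1$, hence $|J(z)| \ll_k 1$. For $K(z)$, the choice of $\ell_0$ in \eqref{eqn:ell_0} ensures $|C_P(z)| \le 1/2$ in every factor, so the principal branch of $\log(1 + C_P(z))$ is defined and satisfies $|\log(1 + C_P(z))| \le 2|C_P(z)|$. This simultaneously delivers absolute convergence of the infinite product defining $K(z)$ and the bound $|\log K(z)| \le 2 \sum_{\deg P \ge \ell_0} |C_P(z)| \ll_k 1$, whence $|K(z)| \ll_k 1$. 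The statement about $|B(z)|$ then follows from the factorization $B = J K$.

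There is no substantive obstacle beyond careful numerical bookkeeping: the essential subtlety is the verification that the specific constant $D$ fixed earlier satisfies $D^2 < 2$. Explicitly, $D^2 = (1.21)^2 e^{0.2} \approx 1.789 < 2$, and this is what makes the single series $\sum_{\ell} (D^2/q)^\ell/\ell$ bounded uniformly in $q \ge 2$, which is the heart of the argument.
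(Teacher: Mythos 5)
Your proof is correct and takes essentially the same approach as the paper: both rely on the bound $|C_P(z)| \ll_k (D/q)^{2\deg P}$ from \cref{lem:c_estimates}, the count $\pi_q(\ell)\le q^\ell/\ell$, and elementary logarithm/exponential comparisons, with the $K$-part invoking $|C_P(z)|\le 1/2$ from \eqref{eqn:ell_0} exactly as the paper does. Your presentation is slightly more streamlined in that it folds both bounds into the single master estimate $\sum_P|C_P(z)|\ll_k 1$ (noting $D^2<2\le q$), whereas the paper bounds $J$ by grouping factors by degree and treating the finite product $\ell<\ell_0$ directly, but the underlying mechanism is the same.
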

\begin{proof}
Recall the prime polynomial theorem implies that 
\begin{equation} \label{eqn:PPT}
\pi_q(\ell) := \#\{ P \in \mathcal{M}_{\ell} : P \text{ irreducible} \} \ll \frac{q^{\ell}}{\ell},
\end{equation}
so we have
    \begin{align*}
        |J(z)| &= \Big|\prod_{\substack{P \\ \deg P < \ell_0}} (1 + C_P(z)) \Big| \\
        &\ll_k \prod_{\substack{P \\ \deg P < \ell_0}} \Big(1 + \frac{D^{2 \deg P}}{q^{2 \deg P}}\Big) = \prod_{1 \leq \ell < \ell_0} \Big(1 + \frac{D^{2 \ell}}{q^{2l}}\Big)^{\pi_q(\ell)} \\
        &\leq \prod_{1 \leq \ell < \ell_0} \exp\Big( O(q^{-\ell}D^{2\ell}/\ell) \Big) \ll_k 1.
    \end{align*}
Here, we have used that $(1+1/x)^x < e$ for all $x > 0$.

By our assumption on $\ell_0$, we have $|C_P(z)| \leq 1/2$, so we may use the expansion ${|\log(1+w)| = O(|w|)}$ for $|w| \leq 1/2$. Thus, the prime polynomial theorem \eqref{eqn:PPT} and \cref{lem:c_estimates} imply  that 
\begin{align*}
|\log(K(z))| &\ll \sum_{\substack{P \\ \deg P \geq \ell_0}} |C_P(z)| \ll_k \sum_{\substack{P \\ \deg P \geq \ell_0}} \frac{D^{2 \ell}}{q^{2 \ell}} \ll_k \sum_{\ell \geq \ell_0} \frac{D^{2\ell}}{q^{2\ell}} \frac{q^{\ell}}{\ell} \ll_k q^{-\ell_0} \ll_k 1.
\end{align*}
In particular, $|K(z)| \ll_{k} 1$.
\end{proof}

 \cref{lem:jk_bounds} shows in particular that $|B(z)| \ll_k 1$ as claimed in \cref{prop:convergence_of_B}. The remainder of \cref{prop:convergence_of_B} rests on the following consequence of the triangle inequality: for $z,w \in \mathcal{R}$, 
\begin{align}\label{eq:bounding_differences_of_B_terms_with_j_k}
    |B(z)-B(w)| &= |J(z)K(z) - J(w)K(w)| \nonumber\\
    &\leq |J(z)-J(w)| \cdot |K(z)| + | K(z) - K(w) | \cdot | J(w)|.
\end{align}
Therefore, showing that the differences $|J(z)-J(w)|$ and $|K(w)-K(z)|$ are bounded in terms of $k$ and explicitly in terms of $q$ will allow us to complete the proof of \cref{prop:convergence_of_B}.

We now choose a suitable value for $\ell_0$ by means of the next lemma.

\begin{lemma}\label{lem:choosing_ell}
   	There exists $\ell_0 = \ell_0(k)$ sufficiently large such that \eqref{eqn:ell_0} holds and for $z,w \in \mathcal{R}$, 
    \[
      \Big|\log \frac{K(z)}{K(w)} \Big| \leq \dfrac{1}{2} \quad \text{ and } \quad   \Big|\log \frac{K(z)}{K(w)} \Big| \ll_k \sqrt{q} ||z-w||_\infty \Big(\frac{D^2}{\sqrt{q}}\Big)^{\ell_0}. 
    \]
 \end{lemma}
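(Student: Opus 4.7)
The plan is to take logarithms and estimate the resulting sum using the two bounds from \cref{lem:c_estimates} together with the prime polynomial theorem \eqref{eqn:PPT}. The key point is that once $\ell_0 = \ell_0(k)$ is large enough to force $|C_P(z)| \leq 1/2$ for all $\deg P \geq \ell_0$ and all $z \in \mathcal{R}$, the individual logarithms behave linearly in $|C_P(z) - C_P(w)|$, and the prime count together with the geometric decay of $(D/q)^{2\ell}$ makes the tail sum small.

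First, I would fix $\ell_0 = \ell_0(k)$ large enough that \eqref{eqn:ell_0} holds. For any $z, w \in \mathcal{R}$ and any $P$ with $\deg P \geq \ell_0$, the straight-line segment from $C_P(z)$ to $C_P(w)$ lies in the disk $\{|x| \leq 1/2\}$, on which $|1+x| \geq 1/2$. A path-integral estimate then yields
\[
\bigl|\log(1 + C_P(z)) - \log(1 + C_P(w))\bigr| \leq 2\,|C_P(z) - C_P(w)|.
\]
Since $|C_P(z)| \leq 1/2$ on $\mathcal{R}$ for $\deg P \geq \ell_0$, the absolute convergence of $\log K$ established in the proof of \cref{lem:jk_bounds} allows one to write $\log(K(z)/K(w))$ as the sum of these differences and conclude
\[
\Bigl|\log \frac{K(z)}{K(w)}\Bigr| \leq 2 \sum_{\substack{P\\ \deg P \geq \ell_0}} |C_P(z) - C_P(w)|.
\]

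Second, I would apply the second estimate of \cref{lem:c_estimates} together with the prime polynomial theorem \eqref{eqn:PPT}. Collecting by degree $\ell = \deg P$, the factor $\pi_q(\ell) \ll q^\ell/\ell$ cancels the $\ell$ produced by \cref{lem:c_estimates}, leading to
\[
\Bigl|\log \frac{K(z)}{K(w)}\Bigr| \ll_k \sqrt{q}\,\|z-w\|_\infty \sum_{\ell \geq \ell_0} \Bigl(\frac{D^2}{q}\Bigr)^{\ell}.
\]
Since $D^2 < 2 \leq q$, the ratio $D^2/q$ is bounded away from $1$ uniformly in $q \geq 2$, so the geometric series is $\ll (D^2/q)^{\ell_0}$. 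Using $D^2/q \leq D^2/\sqrt{q}$ for $q \geq 1$, this gives the second claimed bound
\[
\Bigl|\log \frac{K(z)}{K(w)}\Bigr| \ll_k \sqrt{q}\,\|z-w\|_\infty\Bigl(\frac{D^2}{\sqrt{q}}\Bigr)^{\ell_0}.
\]

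Finally, for the uniform bound $|\log(K(z)/K(w))| \leq 1/2$, I would use that $z, w \in \mathcal{R}$ implies $\|z-w\|_\infty \leq 2.2/\sqrt{q}$, so the right-hand side simplifies to $O_k((D^2/q)^{\ell_0}) \leq O_k((D^2/2)^{\ell_0})$. Since $D^2/2 < 1$, enlarging $\ell_0 = \ell_0(k)$ if necessary ensures this is at most $1/2$, which fixes the final choice of $\ell_0$. There is no serious obstacle here: the only subtle step is the linearization of $\log(1+\cdot)$, which is handled cleanly by the segment argument once $\ell_0$ is taken large enough to keep $|C_P|$ safely below $1$.
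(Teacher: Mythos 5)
Your proposal is correct and follows essentially the same route as the paper: both linearize $\log(1+C_P)$ on the disk $|C_P|\le 1/2$ (you via the Lipschitz bound $|1/(1+x)|\le 2$, the paper via the Taylor series and the factorization of $C_P(z)^n - C_P(w)^n$), then sum the second estimate of \cref{lem:c_estimates} against $\pi_q(\ell)\ll q^\ell/\ell$ to obtain a geometric tail, and finish by noting $\|z-w\|_\infty \ll q^{-1/2}$ and $D^2/q < 1$ to make the result $\le 1/2$ after enlarging $\ell_0$. Your explicit replacement $D^2/q \le D^2/\sqrt{q}$ to match the lemma's stated exponent is a minor cosmetic step the paper leaves implicit, and is fine.
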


\begin{proof}
    By \cref{lem:c_estimates},  $K(z)$ is an absolutely convergent infinite product inside $\mathcal{R}$. So, it is never zero in that region and we can take its logarithm. Using the Taylor expansion of $\log(1+z)$ for $|z| \leq 1/2$, we find that
    \begin{align*} \label{eq:logkk}
    \Big|\log \frac{K(z)}{K(w)}\Big| &= \Big|\sum_{\substack{P \\ \deg P \geq \ell_0}} \big(\log(1+C_P(z))-\log(1+C_P(w))\big)\Big| \\ 
    &= \Big| \sum_{\substack{P \\ \deg P \geq \ell_0}} \sum_{n = 1}^\infty (-1)^{n+1} \frac{C_P(z)^n-C_P(w)^n}{n} \Big| \\
    &\leq \sum_{\substack{P \\ \deg P \geq \ell_0}} \sum_{n = 1}^\infty \frac{|C_P(z) - C_p(w)|}{n} \sum_{i = 0}^{n-1} |C_P(z)^i C_P(w)^{n-1-i}| \\
    &\leq \sum_{\substack{P \\ \deg P \geq \ell_0}} |C_P(z) - C_p(w)|.
    \end{align*}

    Applying \cref{lem:c_estimates} as well as the prime polynomial theorem \eqref{eqn:PPT}, we see that
    \begin{align*}
        \sum_{\substack{P \\ \deg P \geq \ell_0}} \big(C_P(z)-C_P(w)\big) &\ll_k \sum_{\substack{\ell \geq \ell_0}} \ell ||z-w||_{\infty} \sqrt{q} \Big( \frac{D}{q}\Big)^{2\ell}\frac{q^{\ell}}{\ell}\\
        &=  \sqrt{q} ||z-w||_{\infty}\sum_{\ell \geq \ell_0} \Big(\frac{D^2}{q}\Big)^{\ell}\\
        &\ll_k \sqrt{q}||z-w||_\infty \Big(\frac{D^2}{q}\Big)^{\ell_0}.
    \end{align*}
    This is the second estimate that we claimed. Since $||z-w||_\infty \ll 1/\sqrt{q}$ and $D^2/q < 1$ for any $q \geq 2$, the first estimate also holds provided $\ell_0 = \ell_0(k)$ is taken to be sufficiently large.
\end{proof}

We proceed to bound the differences in \eqref{eq:bounding_differences_of_B_terms_with_j_k}, starting with $|K(z)- K(w)|$.

\begin{lemma} \label{lem:difofk}
   If $\ell_0 = \ell_0(k)$ is sufficiently large  according to \cref{lem:choosing_ell}, then
    $$
    |K(z)- K(w)| \ll_k \sqrt{q}  ||z-w||_{\infty} {\Big(\frac{D^2}{q}\Big)}^{\ell_0}.
    $$
\end{lemma}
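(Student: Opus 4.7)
The plan is to combine Lemma \ref{lem:jk_bounds} and Lemma \ref{lem:choosing_ell} via the identity
\[
K(z) - K(w) = K(w)\left(\frac{K(z)}{K(w)} - 1\right),
\]
which is valid since $K(w)$ is nonzero on $\mathcal{R}$ (being an absolutely convergent infinite product of factors with $|C_P(w)| \leq 1/2$, by the choice of $\ell_0$ in \eqref{eqn:ell_0}).

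First, I would take $\ell_0 = \ell_0(k)$ large enough so that both \cref{lem:choosing_ell} applies and $|K(w)| \ll_k 1$ from \cref{lem:jk_bounds}. Setting $L := \log(K(z)/K(w))$, \cref{lem:choosing_ell} gives $|L| \leq 1/2$ and
\[
|L| \ll_k \sqrt{q}\,\|z-w\|_\infty \Big(\frac{D^2}{q}\Big)^{\ell_0}.
\]
Next, from the elementary bound $|e^{L} - 1| \leq 2|L|$ valid for $|L| \leq 1/2$ (a direct consequence of the power series expansion of the exponential), I deduce
\[
\Big|\frac{K(z)}{K(w)} - 1\Big| = |e^{L} - 1| \leq 2|L|.
\]

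Finally, combining these ingredients,
\[
|K(z) - K(w)| = |K(w)| \cdot \Big|\frac{K(z)}{K(w)} - 1\Big| \ll_k |L| \ll_k \sqrt{q}\,\|z-w\|_\infty \Big(\frac{D^2}{q}\Big)^{\ell_0},
\]
which is precisely the desired bound. There is no real obstacle here: all the analytic work — controlling sums of $|C_P(z) - C_P(w)|$ via the prime polynomial theorem and \cref{lem:c_estimates} — has already been carried out in the proof of \cref{lem:choosing_ell}, and the present lemma is essentially a quantitative exponentiation of that logarithmic estimate together with the uniform boundedness of $K$.
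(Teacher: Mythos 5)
Your proof is correct and takes essentially the same approach as the paper: factor out $K(w)$, bound it by $\ll_k 1$ via \cref{lem:jk_bounds}, then use $|e^L-1| \ll |L|$ for $|L| \le 1/2$ together with the logarithmic estimate from \cref{lem:choosing_ell}. The paper's argument is just a more compressed version of exactly this.
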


\begin{proof}

We simply observe that
$$
|K(z)-K(w)| \ll_k \Big|\frac{K(z)}{K(w)}-1\Big| \ll \Big|\log \frac{K(z)}{K(w)}\Big|,
$$
where we used \cref{lem:jk_bounds} for the first bound, and $e^z = 1 + O(|z|)$ for $|z| \leq 1/2$ and \cref{lem:choosing_ell} for the second bound.
\end{proof}

The last estimate is an upper bound for the difference $|J(z)-J(w)|$. 

\begin{lemma} \label{lem:j_difference} For $z,w \in \mathcal{R}$, we have that
    $$
    |J(z) - J(w)| \ll_{k} \ell_0 ||z-w||_\infty \frac{D^2}{\sqrt{q}}.
    $$
\end{lemma}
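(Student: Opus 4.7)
\textbf{Proof plan for Lemma \ref{lem:j_difference}.} The plan is to exploit that $J(z)$ is a \emph{finite} product (only irreducibles $P$ with $\deg P < \ell_0$) and reduce the difference of products to a sum of differences via a standard telescoping identity: for complex numbers $a_1,\dots,a_n$ and $b_1,\dots,b_n$,
\[
\prod_{i=1}^{n}(1+a_i) - \prod_{i=1}^{n}(1+b_i) = \sum_{j=1}^{n}(a_j-b_j)\prod_{i<j}(1+a_i)\prod_{i>j}(1+b_i).
\]
Applying this with $a_P = C_P(z)$ and $b_P = C_P(w)$ indexed by $\{P : \deg P < \ell_0\}$ gives
\[
J(z)-J(w) = \sum_{\substack{P \\ \deg P < \ell_0}} \bigl(C_P(z)-C_P(w)\bigr)\,\Pi_P,
\]
where each $\Pi_P$ is a partial product of factors of the form $1+C_{P'}(z)$ or $1+C_{P'}(w)$ with $\deg P' < \ell_0$.

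Next I would bound each $\Pi_P$ by an $O_k(1)$ constant. This follows by repeating the short argument in \cref{lem:jk_bounds}: applying \cref{lem:c_estimates} and the prime polynomial theorem \eqref{eqn:PPT},
\[
|\Pi_P| \leq \prod_{1 \leq \ell < \ell_0}\bigl(1 + O_k((D/q)^{2\ell})\bigr)^{\pi_q(\ell)} \leq \exp\Big( O_k\Big(\sum_{1\leq\ell<\ell_0} q^{-\ell}D^{2\ell}/\ell\Big)\Big) \ll_k 1,
\]
uniformly in $P$ and in the choice of $z$ or $w$ for each factor. Hence
\[
|J(z)-J(w)| \ll_k \sum_{\substack{P \\ \deg P < \ell_0}} |C_P(z)-C_P(w)|.
\]

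Then I would substitute the difference bound from \cref{lem:c_estimates}, namely $|C_P(z)-C_P(w)| \ll_k \ell\, \|z-w\|_\infty\, \sqrt{q}\,(D/q)^{2\ell}$ with $\ell=\deg P$, and group irreducibles by degree via \eqref{eqn:PPT}, giving
\[
|J(z)-J(w)| \ll_k \|z-w\|_\infty\, \sqrt{q}\sum_{\ell=1}^{\ell_0-1} \ell \cdot \pi_q(\ell)\cdot (D/q)^{2\ell} \ll \|z-w\|_\infty\, \sqrt{q}\sum_{\ell=1}^{\ell_0-1}\Bigl(\frac{D^2}{q}\Bigr)^{\!\ell}.
\]
Since $D^2 = (1.21)^2 e^{0.2} < 2 \leq q$, the ratio $D^2/q$ is strictly less than $1$ for all allowed $q$, so the geometric series is dominated by its leading term times the number of summands:
\[
\sum_{\ell=1}^{\ell_0-1}(D^2/q)^\ell \leq \ell_0 \cdot \frac{D^2}{q}.
\]
Combining these bounds yields $|J(z)-J(w)| \ll_k \ell_0\,\|z-w\|_\infty\, D^2/\sqrt{q}$, as desired.

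The only real subtlety is the telescoping step and verifying that each partial product $\Pi_P$ truly enjoys the same uniform $O_k(1)$ bound as $J$ itself; once that is in place, the rest is a routine application of the already established \cref{lem:c_estimates} and the prime polynomial theorem. No extra input is needed beyond the choice of $\ell_0 = \ell_0(k)$ from \cref{lem:choosing_ell}, and the constant $\ell_0$ explicitly appears in the final bound only because we pass from the geometric series to $\ell_0$ times its largest term.
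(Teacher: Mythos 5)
Your proof is correct and takes essentially the same approach as the paper: the telescoping identity you invoke is precisely what the paper proves by induction on a finite set $\mathcal{N}$ of small-degree irreducibles, and the final estimates coincide. The only cosmetic difference is where the factor $\ell_0$ enters (the paper absorbs $\deg P < \ell_0$ into the difference bound from \cref{lem:c_estimates}, while you bound the resulting geometric series by $\ell_0$ times its leading term); both routes give the same bound.
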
 

\begin{proof}
Let $\mathcal{N}$ be some subset of the set of all monic irreducible polynomials $P$ of degree less than $\ell_0$. We will show by induction on $|\mathcal{N}|$ that 
$$
\Big|\prod_{P \in \mathcal{N}} (1+C_P(z)) - \prod_{P \in \mathcal{N}} (1+C_P(w))\Big| \ll_{k} \ell_0 \sqrt{q} \sum_{P \in \mathcal{N}} ||z-w||_\infty \Big(\frac{D}{q}\Big)^{2 \deg P}.
$$
If $|\mathcal{N}| = 1$, this statement is just \cref{lem:c_estimates} and the fact that $\deg P < \ell_0$.
In general, we have
\begin{align*}
    \Big|\prod_{P \in \mathcal{N}}& (1+C_P(z)) - \prod_{P \in \mathcal{N}} (1+C_P(w))\Big| \\
    &\leq |C_{P_1}(z) - C_{P_1}(w)| \prod_{\substack{P \in \mathcal{N} \setminus \{P_1\}}} \left| 1 + C_P(z) \right| \\
    &+
    \Big|
    (1 + C_{P_1}(w)) \Big( \prod_{\substack{P \in \mathcal{N} \setminus \{P_1\}}} (1 + C_P(z)) - \prod_{\substack{P \in \mathcal{N} \setminus \{P_1\}}} (1 + C_P(w))\Big)
    \Big|
\end{align*}
for any $P_1 \in \mathcal{N}$. Now, we can trivially bound $|1+C_{P_1}(w)| \ll_{k} 1$, and by essentially the same argument as in \cref{lem:jk_bounds}, $\left|\prod_{\substack{P \in \mathcal{N} \setminus \{P_1\}}} (1 + C_P(z))\right| \ll_{k} 1$. Applying the induction hypothesis and \cref{lem:c_estimates}, we find that the above is bounded by
$$
\ell_0\sqrt{q}\,||z-w||_\infty \Big(\frac{D}{q}\Big)^{2 \deg P_1} + \ell_0\sum_{P \in \mathcal{N} \setminus \{P_1\}} \sqrt{q}\,||z-w||_\infty \Big(\frac{D}{q}\Big)^{2 \deg P},
$$
up to constants depending only on $k$, as claimed. In particular, we now get
\begin{align*}
    |J(z) - J(w)|
    &\ll_{k} \ell_0 \sqrt{q} \sum_{\substack{P \\ \deg P < \ell_0}} ||z-w||_\infty {\Big(\frac{D}{q}\Big)}^{2 \deg P}
    \\
    &\ll \ell_0\sqrt{q} \sum_{\ell < \ell_0} ||z-w||_\infty {\Big(\frac{D}{q}\Big)}^{2 \ell} \frac{q^{\ell}}{\ell} \ll \ell_0 ||z-w||_\infty \frac{D^2}{\sqrt{q}},
\end{align*}
by the prime polynomial theorem \eqref{eqn:PPT}. This proves the lemma.
\end{proof}

Inserting \cref{lem:j_difference,lem:difofk,lem:jk_bounds} in \eqref{eq:bounding_differences_of_B_terms_with_j_k}, we obtain
\begin{align*}
    |B(z)-B(w)| &\ll_k ||z-w||_{\infty}\Big( \frac{2}{\sqrt{q}}\Big) + ||z-w||_{\infty}\sqrt{q}\Big(\frac{2}{q}\Big)^{\ell_0} \ll_k   \frac{1}{\sqrt{q}} ||z-w||_{\infty},
\end{align*} 
as desired. This completes the proof of \cref{prop:convergence_of_B}. 

\section{Proof of \cref{prop:integral_representation_of_2kth_moment}: Integral representation}
\label{sec:analytic-integral-repn}

Recall that
$$ \mathbb{E} \Big | \sum_{F \in \cM_N} f(F) \Big|^{2k} = \frac{1}{(2\pi i)^{2k}}\int_{|z_{2k}|=\epsilon_{2k}}\cdots \int_{|z_{1}|=\epsilon_1} \frac{A(z_1, \dots, z_{2k})}{z_1^{N+1}\cdots z_{2k}^{N+1}} \, dz_1\cdots dz_{2k}.$$
Recalling that $\zeta_q(z)=\frac{1}{1-qz}$, we can further rewrite this as
\begin{equation}{\label{integral_with_B}}
    \frac{1}{(2\pi i)^{2k}} \int_{|z_{2k}| = \epsilon_{2k}} \cdots\int_{|z_1|=\epsilon_1} \frac{(z_1\cdots z_{2k})^{-(N+1)}}{\prod_{i=1}^k\prod_{j=k+1}^{2k}(1-qz_iz_j)}\cdot B(z_1, \dots, z_{2k}) \, dz_1\cdots dz_{2k}.
\end{equation}
We make several changes of variable as a means of centering the integral around $z_1$. In each case the contour integrals can still be taken along circles of small radii around $0$. First, we modify $z_{k+1}$ through $z_{2k}$ by sending
$$z_j \mapsto z_1^{-1}q^{-1}z_j \quad \text{ for }\quad k+1\leq j\leq 2k.$$
This yields the expression
$$\frac{q^{kN}}{(2\pi i)^{2k}} \int \cdots\int \frac{(z_1^{1-k} z_2 \cdots z_{2k})^{-(N+1)}}{\prod_{i=2}^k\prod_{j=k+1}^{2k}(1-z_iz_j/z_1)}\cdot \frac{B\left(z_1, \dots, z_1^{-1}q^{-1}z_{2k}\right)}{z_1^k\prod_{j=k+1}^{2k} (1-z_j)} \, dz_1\cdots dz_{2k}.$$
Here, the second product in the denominator of the integrand corresponds to the contribution from the double product in \eqref{integral_with_B} in the case where $z_i = z_1$. We may also factor out a $q^{kN}$ by considering the contributions from the numerator of the integrand in \eqref{integral_with_B} as well as the changes in the increment.

We make the second substitution
$$z_i \mapsto z_1z_i \quad \text{ for }\quad 2\leq i\leq k,$$
and our integral becomes
\begin{equation}{\label{post_COV}}
    \frac{q^{kN}}{(2\pi i)^{2k}} \int \cdots\int \frac{1}{z_1}\frac{(z_2 \cdots z_{2k})^{-(N+1)}}{\prod_{i=2}^k\prod_{j=k+1}^{2k}(1-z_iz_j)}\cdot \frac{B\left(z_1, z_1z_2,\dots, z_1^{-1}q^{-1}z_{2k}\right)}{\prod_{j=k+1}^{2k} (1-z_j)} \, dz_1\cdots dz_{2k}.
\end{equation}

Before proceeding with our analysis of \eqref{post_COV}, we observe that $B(z_1, z_1z_2, \dots, z_1^{-1}q^{-1}z_{2k})$ is independent of the value of $z_1$. Indeed,
\begin{align*}
    B(z_1, z_1z_2, \dots, z_1^{-1}q^{-1}z_{2k}) = \prod_{P} & \msum_{\substack{m_1+\cdots+m_k=\\ m_{k+1}+\cdots+m_{2k}}} \Big(\frac{z_1^{m_1+\cdots+m_k}}{z_1^{m_{k+1}+\cdots+m_{2k}}}\Big)^{\deg P}(z_2^{m_2}\cdots q^{-m_{2k}}z_{2k}^{m_{2k}})^{\deg P}\\
    \times &\prod_{i=2}^k \prod_{j=k+1}^{2k} \Big( 1-(q^{-1}z_iz_j)^{\deg P}\Big)\prod_{j=k+1}^{2k} \Big( 1-(q^{-1}z_j)^{\deg P} \Big).
\end{align*}
Since $m_1+\cdots +m_k = m_{k+1} + \cdots +m_{2k}$, the $z_1$ term vanishes and we are left with
$$
\prod_{P}  \msum_{\substack{m_1+\cdots+m_k\\ = m_{k+1}+\cdots+m_{2k}}}(z_2^{m_2}\cdots q^{-m_{2k}}z_{2k}^{m_{2k}})^{\deg P}\prod_{i=2}^k \prod_{j=k+1}^{2k} (1-(q^{-1}z_iz_j)^{\deg P}) \prod_{j=k+1}^{2k} (1-(q^{-1}z_j)^{\deg P}), 
$$
which is clearly independent of $z_1$. We further note that in \eqref{post_COV}, the only term that depends on $z_1$ is the $1/z_1$ factor. Thus, we can integrate out $z_1$ multiplying our expression by $2\pi i$. 

In the input of $B$, we can thus pick any value for $z_1$, so without loss of generality take $z_1=q^{-1/2}$. Our integral now becomes
\begin{equation}{\label{no_z1}}
    \frac{q^{kN}}{(2\pi i)^{2k-1}}\int\cdots \int \frac{(z_2 \cdots z_{2k})^{-(N+1)}}{\prod_{i=2}^k\prod_{j=k+1}^{2k}(1-z_iz_j)}\cdot \frac{B\left(q^{-1/2}, q^{-1/2}z_2\cdots, q^{-1/2}z_{2k}\right)}{\prod_{j=k+1}^{2k} (1-z_j)} \, dz_2\cdots dz_{2k}.
\end{equation}
Despite the changes of variables, the $i$th integral is still taken around some disk of positive radius $\epsilon_i$ which now only have to satisfy $0 < \epsilon_i < 1.1$ due to \cref{prop:convergence_of_B} and our choice $z_1=q^{-1/2}$.  For each integral, we can parametrize its contour in terms of its argument $\theta_i$ by
$$
\gamma(\theta_i) = e^{c_i/N} e^{i\theta_i}.
$$
Here, the $c_i \in (1/2,3/4)$ are some constants, chosen to be independent of $N$ and $q$. Note the radius satisfies $e^{c_i/N} \leq e^{3/80} < 1.1$ as $N \geq 20$ by assumption. Explicitly substituting the contours, we rewrite \eqref{no_z1} as
\begin{align}{\label{parametrized}}
    \frac{q^{kN}}{(2\pi)^{2k-1}} \int_{-\pi}^\pi \cdots\int_{-\pi}^\pi &\frac{e^{-c_2-\cdots-c_{2k}-iN(\theta_2+\cdots + \theta_{2k})}}{\prod_{i=2}^k\prod_{j=k+1}^{2k}\left(1-e^{(c_i+c_j)/N+i\theta_i+i\theta_j}\right)}\cdot \prod_{j=k+1}^{2k} \frac{1}{1-e^{c_j/N+i\theta_j}}\\
    &\times B\left(q^{-1/2}, q^{-1/2}e^{c_2/N + i\theta_2}, \dots, q^{-1/2}e^{c_{2k}/N+i\theta_{2k}}\right)d\theta_2\cdots d{\theta_{2k}}. \nonumber
\end{align}
Here, we see some cancellations arising from the derivative term and the original numerator of the integrand from \eqref{no_z1}. Note that the factor of $i^{2k-1}$ cancels with the constant term preceding the integral. At this point, we make the  change of variables
\begin{align*}
&\theta_j\mapsto \theta_j/N \quad \text{ for }\quad j=2,3,\dots,2k.
\end{align*}
We will also multiply the numerator and denominator by $N^{(k-1)^2}$, preceding each term in each product of the denominator in \eqref{parametrized} by $N$. This ultimately yields
\begin{align*}
\frac{N^{(k-1)^2} }{(2\pi)^{2k-1}}q^{kN}\int_{-\pi N}^{\pi N}\cdots \int_{-\pi N}^{\pi N} &\frac{e^{-(c_2+i\theta_2 + \cdots + c_{2k}+i\theta_{2k})}}{\prod_{i=2}^k\prod_{j=k+1}^{2k} N\left(1-e^{(c_i+i\theta_i + c_j+i\theta_j)/N}\right)}\\
\times &\prod_{j=k+1}^{2k}\frac{d\theta_j}{N\left(1-e^{(c_j+i\theta_j)/N}\right)} \nonumber \\
\times &B\left(q^{-1/2}, q^{-1/2}e^{(c_2 + i \theta_2)/N}, \dots, q^{-1/2}e^{(c_{2k}+i\theta_{2k})/N}\right) \nonumber \\
\times &\ d\theta_2\cdots d\theta_k, \nonumber
\end{align*}
completing the proof of \cref{prop:integral_representation_of_2kth_moment}. 

\section{Proof of \cref{prop:magic_square_integral}: magic squares}

\label{sec:analytic-magic-square}
This section is nearly identical to \cite[Section 3]{ConreyGamburd-2006} but we have included the argument for the sake of completeness. Making the change of variables $\theta_j \mapsto -\theta_j$ for $j=2,\dots,2k$, we turn each integral into a complex line integral along some vertical strip of length $2\pi N$ with real part $-c_j$. Letting $s_j = -c_j + i \theta_j$, our integral becomes
\begin{align*}
I_k = \frac{N^{1-2k}}{(2\pi i)^{2k-1}}\int_{-c_{2k} - i \pi N }^{ -c_{2k} + i \pi N}\cdots \int_{-c_{2} - i\pi N}^{-c_2 + i \pi N} &\frac{e^{s_2+\cdots+s_{2k}}}{\prod_{i=2}^k\prod_{j=k+1}^{2k} \left(1-e^{-(s_i+s_j)/N}\right)}\\
\times &\prod_{j=k+1}^{2k}\frac{ds_j}{\left(1-e^{-s_j/N}\right)} \nonumber \\
\times &\ ds_2\cdots ds_{2k}. \nonumber
\end{align*}
Let $u := (s_2,\dots,s_k)$ and $v := (s_{k+1}, \dots, s_{2k})$. Then, using the geometric series formula, the integrand becomes 
\begin{align*}
 &e^{s_1+\cdots+s_{2k}} \prod_{i=2}^k\prod_{j=k+1}^{2k} \frac{1}{\Big(1-e^{-(s_i+s_j)/N}\Big)}
 \times \prod_{j=k+1}^{2k}\frac{1}{\Big(1-e^{-s_j/N}\Big)} \\
 &= e^{s_1+\cdots+s_{2k}} \prod_{i=2}^k\prod_{j=k+1}^{2k} \Big( \sum_{a_{ij} \geq 0} (e^{(-s_i-s_j)/N})^{a_{ij}} \Big) \prod_{j = k+1}^{2k}\Big( \sum_{b_j \geq 0} (e^{-s_j/N})^{b_j} \Big) \\
 &= e^{s_1+\cdots+s_{2k}} \sum_{\alpha \in \N_0^{k-1},\; \beta \in \N_0^{k}} M_{\alpha, \beta} e^{-\alpha \cdot u / N} e^{-\beta \cdot v / N},
\end{align*}
where $\cdot$ denotes the usual dot product and the $M_{\alpha, \beta} \geq 0$ are defined such that the last equation holds. The sum is convergent on the compact region of integration. Applying Fubini's theorem, we may thus interchange sum and integral to obtain
\begin{align*}
I_k &= \frac{N^{1-2k}}{(2\pi i)^{2k-1}} \sum_{\substack{\alpha \in \N_0^{k-1},\\ \beta \in \N_0^{k}}} M_{\alpha, \beta} \int_{-c_{2k} - i \pi N }^{ -c_{2k} + i \pi N}\cdots \int_{-c_{2} - i\pi N}^{-c_2 + i \pi N} e^{-\alpha \cdot u / N} e^{-\beta \cdot v / N} e^{s_2+\cdots+s_{2k}} ds_2 \cdots ds_{2k} \\
&= \frac{N^{1-2k}}{(2\pi i)^{2k-1}} \sum_{\substack{\alpha \in \N_0^{k-1},\\ \beta \in \N_0^{k}}} M_{\alpha, \beta} \Big(\int_{-c_{2k} - i\pi N}^{-c_{2k} + i \pi N} e^{-(\beta_{2k}-N) s_{2k} / N} ds_{2k}\Big) 
\cdots
\Big(\int_{-c_{2} - i\pi N}^{-c_2 + i \pi N} e^{-(\alpha_2-N) s_2 / N } ds_2\Big) \\
&= \frac{N^{1-2k}}{(2\pi i)^{2k-1}} \sum_{\substack{\alpha \in \N_0^{k-1},\\ \beta \in \N_0^{k}}} M_{\alpha, \beta} \prod_{\substack{i=2 \\ \alpha_i \neq N}}^k \frac{N\left( e^{-(\alpha_i-N)(-c_i + i\pi N)/N} - e^{-(\alpha_i - N)(-c_i - i \pi N)/N} \right)}{N - \alpha_i} \prod_{\substack{i=2 \\ \alpha_i = N}}^k (2\pi i N) \\
&\hspace{3.7cm} \times \prod_{\substack{j = k+1 \\ \beta_j \neq N}}^{2k} \frac{N\left( e^{-(\beta_j - N)(-c_j + i \pi N)/N} - e^{-(\beta_j - N)(-c_j - i \pi N) / N} \right)}{N - \beta_j} \prod_{\substack{j = k+1 \\ \beta_j = N}}^{2k} ( 2 \pi i N).
\end{align*}
Here, we let $\alpha = (\alpha_2,\dots,\alpha_k)$ and $\beta = (\beta_{k+1},\dots,\beta_{2k})$. Now, observe that $$e^{-(\alpha_i-N)(-c_i + i\pi N)/N} - e^{-(\alpha_i - N)(-c_i - i \pi N)/N} = e^{(\alpha_i/N - 1)c_i} 2i\sin(\left(-\alpha_i+N)\pi\right)),$$
and as $\alpha_i\in \Z$, this (and the corresponding expression for $\beta_j$) evaluates to 0 for all $i,j$. Thus, the only term in the summation that contributes to $I_k$ is $M_{(N, \dots, N), (N,\dots, N)}$, which is just the number of possible choices for $a_{ij}$ and $b_j$ such that
\begin{equation*}
    \sum_{j = k+1}^{2k} a_{ij} = N \quad \text{ for }\quad i=2,\dots,k, \qquad \text{and} \qquad 
    b_j + \sum_{i = 2}^k a_{ij} = N \quad \text{ for }\quad j=k+1,\dots,2k.
\end{equation*}
It follows that we also have
\begin{equation*}
\sum_{j = k+1}^{2k} b_j = \sum_{j = k+1}^{2k} \Big (b_j + \sum_{i = 2}^k a_{ij} \Big) - \sum_{i = 2}^k \sum_{j = k+1}^{2k} a_{ij} = kN - (k-1)N = N.
\end{equation*}
This gives the desired equality $I_k = \mathcal{S}_k(N)$, establishing \cref{prop:magic_square_integral}. 


\section{Proof of \cref{prop:error_term_estimation}: estimating the error term}
\label{sec:analytic-error-integrals}

To facilitate notation, we define
$$
F(w) := F(w, N) := \frac{1}{N(1-e^{-w/N})}
$$
for $w \in \C$. For $i = 2,\dots,2k$, denote 
$$
z_i = c_i + i \theta_i, \quad c_i \in \left( \frac{1}{4}, \frac{3}{4}\right), \ \theta_i \in \R.
$$
For any Lebesgue-measurable region $R \subset [-\pi N, \pi N]^{2k-1}$, we further define
$$
I(R) := \underset{R}{\int \cdots \int} \prod_{i = 2}^{k} \prod_{j = k+1}^{2k} |F(z_i + z_j)| \prod_{j = k+1}^{2k} |F(z_j)| \ d\theta_2 \cdots d\theta_{2k}
$$
and
$$\Delta(R) := \sup_{(\theta_2, ..., \theta_{2k})\in R}\left|B\left(q^{-1/2}, q^{-1/2}e^{(c_2+i\theta_2)/N}, \dots, q^{-1/2}e^{(c_{2k}+i\theta_{2k})/N}\right) - b_k(q)\right|.$$

To give a brief outline, the proof of \cref{prop:error_term_estimation} will involve splitting the region of integration, $[-\pi N, \pi N]^{2k-1}$, into three regions: 
\[
R_1 := [-\pi N, \pi N]^{2k-1}\setminus \mathfrak{S}(X), 
\qquad 
R_2 := \mathfrak{S}(X)\setminus \mathfrak{S}(X, Y),
\qquad 
R_3 := \mathfrak{S}(X, Y),
\]
as specified later by \eqref{eqn:Region-SX} and \eqref{eqn:Region-SXY}  and depend on the parameters $X$ and $Y$ given in the proposition. We observe that the integral $\widetilde{J_k}$ from \eqref{error_integral_I_prime} can be bounded by
$$I(R_1)\Delta(R_1) + I(R_2)\Delta(R_2) + I(R_3)\Delta(R_3).$$
In \cref{lem:I(R_1),lem:I(R_2)}, we will give estimates for $I(R_1)$ and $I(R_2)$, respectively. Here we take the trivial estimates for $\Delta(R_1)$ and $ \Delta(R_2)$ coming from \cref{prop:convergence_of_B}. In $R_3$, we loosely estimate $I(R_3)$ in \cref{lem:bounding_integral_trivial_estimate} and carefully bound $\Delta(R_3)$ in \cref{lem:bound_on_diff_of_b_terms}. Combining these estimates yields the desired error term in \cref{prop:error_term_estimation}. 

We start with some simple upper bounds for $F$.

\begin{lemma}{\label{lem:F_bound}}
    For $w \in \C$ with $0 < \Re(w) \leq N$, we have
    $$
    |F(w)| \ll \begin{cases}
    \frac{\textstyle 1}{\textstyle |w|} & \text{if $\ |\Im(w)| \leq \pi N$,} \\
    \frac{\textstyle 1}{\textstyle |w-2\pi i N|} & \text{if $\ \pi N \leq \Im(w) \leq 2 \pi N$,} \\
    \frac{\textstyle 1}{\textstyle |w+2\pi i N|} & \text{if $\ -2 \pi N \leq \Im(w) \leq  -\pi N$}.
    \end{cases}
    $$
    
\end{lemma}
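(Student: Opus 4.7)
The plan is to normalize by setting $u := w/N$, so that $F(w) = \frac{1}{N(1-e^{-u})}$, and to exploit the $2\pi i$-periodicity of $e^{-u}$. The three cases of the lemma correspond to $u$ lying in three adjacent horizontal strips of width $\pi$ in the imaginary direction. Cases two and three will reduce to case one by shifting $u$ by $\pm 2\pi i$, which leaves $1 - e^{-u}$ invariant; the main work is therefore to establish the first bound.

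For case one, the plan is to introduce the entire function $g(u) := (1 - e^{-u})/u$, extended to $g(0) := 1$. Its zero set consists precisely of the nonzero zeros of $1 - e^{-u}$, namely $u = 2\pi i n$ for $n \in \Z \setminus \{0\}$. None of these lie in the compact rectangle
$$K := \{u \in \C : 0 \leq \Re u \leq 1,\ |\Im u| \leq \pi\},$$
since the only boundary candidates $\pm i\pi$ satisfy $1 - e^{\mp i\pi} = 2 \neq 0$. Continuity of $g$ together with compactness of $K$ will then provide a universal constant $c > 0$ with $|g(u)| \geq c$ throughout $K$. Combining this with $|F(w)| = \frac{1}{N|u|\,|g(u)|} = \frac{1}{|w|\,|g(u)|}$ gives the claimed bound $|F(w)| \ll 1/|w|$.

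For case two with $\pi N \leq \Im w \leq 2\pi N$, I would set $w' := w - 2\pi i N$. Since $e^{-w/N} = e^{-w'/N - 2\pi i} = e^{-w'/N}$, we have $F(w) = F(w')$, and $w'$ satisfies $0 < \Re w' \leq N$ and $-\pi N \leq \Im w' \leq 0$. Case one then applies to $w'$ and yields
$$|F(w)| = |F(w')| \ll \frac{1}{|w'|} = \frac{1}{|w - 2\pi i N|}.$$
Case three is entirely analogous using $w' := w + 2\pi i N$.

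I do not anticipate any real obstacle here; the only mild subtlety is handling the apparent singularity of $g$ at $u = 0$ (absorbed into the $g(0) = 1$ extension that makes $g$ entire) and verifying that the rectangle $K$ avoids the zeros of $1 - e^{-u}$, both of which are clean elementary checks.
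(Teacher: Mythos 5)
Your proposal is correct and follows essentially the same approach as the paper: both normalize $u = w/N$, observe that $u/(1-e^{-u})$ (equivalently, your $1/g(u)$) has a removable singularity at $0$ and is therefore bounded on the compact rectangle, and dispatch the other two cases by reducing to the first. The only cosmetic difference is that you make the $2\pi i$-periodicity reduction explicit where the paper simply says "one uses the same argument in the other cases."
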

\begin{proof}
Note that the function
$$
\frac{w}{N(1-e^{-w/N})}
$$
is holomorphic on an open set containing $R = \{ w \in \C \, | \, |\Im(w)| \leq \pi N, \, 0 \leq \Re(w) \leq N\}$ since the singularity at $0$ is removable. Making the substitution $w \mapsto N \cdot w$, we see that it suffices to show that $\frac{w}{(1-e^{-w})}$ is bounded on $\tilde{R} = \{ w \in \C\, | \, |\Im(w)| \leq \pi, \, 0 \leq \Re(w) \leq 1 \}$. This follows immediately since the above expression is holomorphic on an open set containing $\tilde{R}$. One uses the same argument in the other cases. 

\end{proof}

We will also need the following estimate:

\begin{lemma} \label{lem:bounding_integral_trivial_estimate}
    $$
    I([-\pi N, \pi N]^{2k-1}) \ll_k (\log N)^k.
    $$
\end{lemma}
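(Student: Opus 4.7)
The approach is to bound the integrand pointwise using \cref{lem:F_bound} and then carry out the integration via Fubini and H\"older's inequality. Writing $\|x\|_N := \min(|x|, |x-2\pi N|, |x+2\pi N|)$ for the distance from $x$ to the nearest element of $2\pi N\mathbb{Z}$, \cref{lem:F_bound} (applied case by case according to the imaginary part) yields
\[
|F(z_j)| \ll \frac{1}{1+|\theta_j|}, \qquad |F(z_i+z_j)| \ll \frac{1}{1+\|\theta_i+\theta_j\|_N}.
\]
The integrand of $I([-\pi N,\pi N]^{2k-1})$ factors across $j \in \{k+1,\dots,2k\}$, so by Fubini
\[
I([-\pi N,\pi N]^{2k-1}) \ll \int_{[-\pi N,\pi N]^{k-1}} \prod_{j=k+1}^{2k} \mathcal{I}_j(\theta_2,\dots,\theta_k)\, d\theta_2 \cdots d\theta_k,
\]
where $\mathcal{I}_j(\theta_2, \dots, \theta_k) := \int_{-\pi N}^{\pi N} (1+|\theta_j|)^{-1}\prod_{i=2}^{k}(1+\|\theta_i+\theta_j\|_N)^{-1}\, d\theta_j$ is a product of $k$ translated Cauchy-type kernels integrated over the ``torus'' $\mathbb{R}/2\pi N\mathbb{Z}$.

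The key estimate is a H\"older-type bound on $\mathcal{I}_j$. For $k \geq 3$, applying H\"older's inequality to the $k-1$ cross-terms, each with exponent $k-1$, yields
\[
\mathcal{I}_j \leq \prod_{i=2}^{k}\left( \int_{-\pi N}^{\pi N} \frac{d\theta_j}{(1+|\theta_j|)(1+\|\theta_i+\theta_j\|_N)^{k-1}} \right)^{\!\!1/(k-1)}.
\]
A standard calculation, splitting the inner region based on whether $\theta_j$ is near or far from the single peak $\theta_j \approx -\theta_i$ on the torus, shows each bracketed integral is $\ll \log N/(1+|\theta_i|)$: the near-peak contribution is $O(1/(1+|\theta_i|))$ and the far-peak contribution is $O(\log N/(1+|\theta_i|)^{k-1})$, both of which are absorbed into $\log N/(1+|\theta_i|)$ for $k\geq 2$. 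Hence $\mathcal{I}_j \ll \log N \cdot \prod_{i=2}^{k}(1+|\theta_i|)^{-1/(k-1)}$. When $k=2$, H\"older is trivial, but one directly has $\mathcal{I}_j \ll \log(2+|\theta_2|)/(1+|\theta_2|) \ll \log N/(1+|\theta_2|)$ by the classical Cauchy-convolution estimate, which fits into the same framework.

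Multiplying over $j\in\{k+1,\dots,2k\}$ gives $\prod_{j}\mathcal{I}_j \ll (\log N)^k \prod_{i=2}^{k}(1+|\theta_i|)^{-k/(k-1)}$. Since the exponent $k/(k-1)$ strictly exceeds $1$, each of the $k-1$ outer integrations $\int_{-\pi N}^{\pi N}(1+|\theta_i|)^{-k/(k-1)}\, d\theta_i$ is bounded uniformly in $N$, yielding $I([-\pi N,\pi N]^{2k-1}) \ll_k (\log N)^k$, as claimed. The main technical obstacle is the three-case structure of \cref{lem:F_bound}, which, without the torus interpretation, would introduce a secondary peak of the kernel $|F(z_i+z_j)|$ near $\theta_j = \pm\pi N$ when $\theta_i$ is close to $\mp\pi N$. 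Using the torus distance $\|\cdot\|_N$ absorbs this subtlety: on $\mathbb{R}/2\pi N\mathbb{Z}$ each Cauchy kernel has a unique peak, and the standard per-variable Cauchy-type estimates apply without modification.
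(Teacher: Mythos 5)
Your argument is correct, but it runs in the opposite order from the paper's and is correspondingly a bit heavier. The paper integrates out the variables $\theta_2,\dots,\theta_k$ first: fixing $i$, it applies H\"older with exponent $k$ to $\int_{-\pi N}^{\pi N}\prod_{j}|F(z_i+z_j)|\,d\theta_i \leq \prod_j\bigl(\int|F(z_i+z_j)|^k\,d\theta_i\bigr)^{1/k}\ll_k 1$, since the $k$th power of each three-branched bound from \cref{lem:F_bound} is integrable for $k\geq 2$. That leaves only $\prod_j\int|F(z_j)|\,d\theta_j\ll(\log N)^k$, with no need to track any residual $\theta_i$-dependence. You instead integrate out $\theta_{k+1},\dots,\theta_{2k}$ first, which forces you to carry the $|F(z_j)|$ factor inside each $\mathcal{I}_j$, apply H\"older with exponent $k-1$, and establish a Cauchy-convolution bound of the form $\int(1+|\theta_j|)^{-1}(1+\|\theta_i+\theta_j\|_N)^{-(k-1)}\,d\theta_j \ll \log N/(1+|\theta_i|)$ on the length-$2\pi N$ torus, before integrating the leftover $\prod_i(1+|\theta_i|)^{-k/(k-1)}$. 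This works (the exponent $k/(k-1)>1$ saves the outer integral, and the torus distance $\|\cdot\|_N$ correctly packages the three cases of \cref{lem:F_bound}), but the intermediate convolution estimate and the separate treatment of $k=2$ are exactly the complications the paper's ordering avoids. The one small imprecision in your write-up is the near/far-peak dichotomy in the bracketed integral: the dominant far-peak contribution really comes from the region $|\theta_j|\ll|\theta_i|$ where $\|\theta_i+\theta_j\|_N\gg|\theta_i|$, and one also needs to check the region of large $|\theta_j|$ gives $\ll\log N/N$, but the claimed bound $\ll\log N/(1+|\theta_i|)$ does hold.
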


\begin{proof}
For fixed $i \in \{2,\dots,k\}$, we use Hölder's inequality and the previous lemma to obtain
\begin{align*}
\int_{-\pi N}^{\pi N} \prod_{j = k+1}^{2k} |F(z_i + z_j)|d\theta_i &\leq \prod_{j = k+1}^{2k} \Big( \int_{-\pi N}^{\pi N} |F(z_i + z_j)|^k d\theta_i \Big)^{1/k} \\ 
&\ll_k \prod_{j = k+1}^{2k} \Big( \int_{|\theta_i + \theta_j| \leq \pi N} \frac{1}{|z_i+z_j|^k} d\theta_i \Big. \\ &\hspace{1.1cm} + \int_{-2\pi N \leq \theta_i + \theta_j \leq -\pi N} \frac{1}{|z_i + z_j + 2\pi i N |^k} d \theta_i \\ &\hspace{1.1cm} + \Big. \int_{ \pi N \leq \theta_i + \theta_j \leq 2 \pi N} \frac{1}{|z_i + z_j - 2\pi i N|^k} d \theta_i \Big)^{1/k}. \\
\end{align*}
Upon applying the inequality $1/|w| \leq \sqrt{2}/(\Re(w)+|\Im(w)|)$ with $\Re(w) > 0$, we have that the above is
\begin{align*}
&\ll \prod_{j = k+1}^{2k} \Big( \int_{\R} \frac{1}{(c_i+c_j+|\theta_i+\theta_j|)^k} d\theta_i \Big. 
+ \int_{\R} \frac{1}{(c_i+c_j+|\theta_i + \theta_j + 2\pi i N |)^k} d \theta_i 
\\ &\hspace{1.5cm} + \Big. \int_{\R} \frac{1}{(c_i+c_j + |\theta_i + \theta_j - 2\pi i N|)^k} d \theta_i \Big)^{1/k}, \\
\end{align*}
and this is clearly bounded by a constant independent of $N$ for $k \geq 2$. (Recall that we made this assumption at the very beginning of \cref{sec:analytic-general}).
Thus, 
\begin{align*}
    I([-\pi N, \pi N]^{2k-1}) &\ll_k \int_{-\pi N}^{\pi N} \cdots \int_{- \pi N}^{\pi N} \prod_{j = k+1}^{2k} |F(z_j)| d \theta_{k+1} \cdots d \theta_{2k} \\
    &= \prod_{j = k+1}^{2k} \int_{-\pi N}^{\pi N} |F(z_j)| d \theta_j \\
    &\ll \prod_{j = k+1}^{2k} \int_{0}^{\pi N} \frac{1}{c_j+\theta_j} d \theta_j \\
    &\ll \prod_{j = k+1}^{2k} \log N = (\log N)^k. \qedhere
\end{align*} 
\end{proof}
The next few propositions estimate different regions of the integral $I$. Let $2 \leq X \leq N/10$ be a parameter. Define the set $\mathfrak{S}(X) \subseteq [-\pi N, \pi N]^{2k-1}$ by
\begin{equation}
\{ (\theta_2,\dots,\theta_{2k}) \in [-\pi N, \pi N]^{2k-1}\ \big|\ |z_i + z_j| \leq X \text{ for } 2 \leq i \leq k, \ k+1 \leq j \leq 2k\}.
\label{eqn:Region-SX}
\end{equation}
We begin by estimating the contribution outside $\mathfrak{S}(X)$. 
\begin{lemma}{\label{lem:I(R_1)}}
    $$
    I([-\pi N, \pi N]^{2k-1} \setminus \mathfrak{S}(X)) \ll_k \frac{(\log N)^{2k-1}}{X^{1-1/k}} + (\log N)^{2k-2} \frac{X}{N}.
    $$
\end{lemma}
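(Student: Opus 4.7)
The plan is to decompose the complement of $\mathfrak{S}(X)$ and bound each piece separately. By the definition of $\mathfrak{S}(X)$ and a union bound over pairs $(i_0,j_0)$ that violate $|z_{i_0}+z_{j_0}|\leq X$,
$$
I\bigl([-\pi N, \pi N]^{2k-1}\setminus \mathfrak{S}(X)\bigr) \leq \sum_{i_0=2}^{k}\sum_{j_0=k+1}^{2k} I\bigl(\{(\theta_2,\dots,\theta_{2k}): |z_{i_0}+z_{j_0}|>X\}\bigr),
$$
and since there are $O_k(1)$ summands it suffices to bound a single term. A crucial feature of $|F|$, per \cref{lem:F_bound}, is that $|F(z_{i_0}+z_{j_0})|$ is small only when $\theta_{i_0}+\theta_{j_0}$ is bounded away from all of $0, \pm 2\pi N$, while the hypothesis $|z_{i_0}+z_{j_0}|>X$ only forces $\theta_{i_0}+\theta_{j_0}$ away from $0$. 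I therefore split $\{|z_{i_0}+z_{j_0}|>X\}$ into the \emph{central piece} $A = \{X < |\theta_{i_0}+\theta_{j_0}| \leq 2\pi N - X\}$ and the \emph{corner piece} $B = \{|\theta_{i_0}+\theta_{j_0}| > 2\pi N - X\}$.

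On $A$, the substitution $u = \theta_{i_0}+\theta_{j_0}$ together with \cref{lem:F_bound} (applied case by case according to where $u$ lies in $[-2\pi N, 2\pi N]$) gives the improved moment bound $\int_A |F(z_{i_0}+z_{j_0})|^k \, d\theta_{i_0} \ll X^{-(k-1)}$, uniformly in the other variables. Applying H\"{o}lder's inequality to $\int_A \prod_{j=k+1}^{2k}|F(z_{i_0}+z_j)|\,d\theta_{i_0}$ with this improved $L^k$-bound for the factor $j=j_0$ and the $O(1)$ $L^k$-bounds from the proof of \cref{lem:bounding_integral_trivial_estimate} for $j\neq j_0$ yields $O(X^{-(1-1/k)})$ from the $\theta_{i_0}$-integration. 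Bounding each of the remaining $2k-2$ integrations crudely by $\int |F(\cdot)|\,d\theta \ll \log N$ via \cref{lem:F_bound} produces the first claimed term $(\log N)^{2k-1}/X^{1-1/k}$.

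On $B$, the $(\theta_{i_0},\theta_{j_0})$-region is concentrated in two $X\times X$ triangles near the corners $(\pm\pi N,\pm\pi N)$. The key gain is that on $B$ the variable $\theta_{j_0}$ is forced into a window of length $O(X)$ around $\pm\pi N$, so $|\theta_{j_0}| \geq \pi N - X \geq \pi N/2$ (using $X\leq N/10$) and hence $|F(z_{j_0})|\ll 1/N$. For fixed $\theta_{j_0}$, the substitution $u = 2\pi N - |\theta_{i_0}+\theta_{j_0}|$ and the wrap-around case of \cref{lem:F_bound} give $\int_B |F(z_{i_0}+z_{j_0})|\,d\theta_{i_0} \ll \int_0^X du/(c+u) \ll \log N$. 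Combining these and integrating $\theta_{j_0}$ over the $X$-window yields $O(X\log N/N)$ from the $(\theta_{i_0},\theta_{j_0})$ double integral; handling the remaining $2k-3$ integrations by the crude $\log N$ bound produces the second claimed term $(\log N)^{2k-2}X/N$.

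The main obstacle is sub-region $B$: the periodicity of $|F|$ in the imaginary direction means $|F(z_{i_0}+z_{j_0})|$ is not small there, so every bit of saving must come from the narrow measure of the corners and the decay of $|F(z_{j_0})|$ forced by $\theta_{j_0}\approx\pm\pi N$. Orchestrating these gains against the remaining integrations while carefully tracking the powers of $\log N$, and verifying the Hölder bounds on $A$ remain uniform in $\theta_{j_0}$ despite the coupled constraint, is the most delicate part of the argument.
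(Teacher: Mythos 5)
Your approach is correct and arrives at the same bound, but by a genuinely simpler decomposition than the paper. The paper partitions $[-\pi N, \pi N]^{2k-1}$ into regions $R_\Lambda(X)$ indexed by \emph{weak $3$-partitions} $\Lambda = (\Lambda_1, \Lambda_2, \Lambda_3)$ of the index set $\Omega = \{2,\dots,k\}\times\{k+1,\dots,2k\}$, tracking for \emph{every} pair $(i,j)$ whether $z_i+z_j$ is near the origin, near $\pm 2\pi i N$, or far from both, and then proves a single uniform bound $I(R_\Lambda) \ll_k (\log N)^{2k-1-|\pi_2(\Lambda_2)|} X^{-|\Lambda_3|(1-1/k)} (X/N)^{|\pi_2(\Lambda_2)|}$. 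You instead take a union bound over a single violating pair $(i_0,j_0)$ and apply the same central/corner dichotomy to that one pair; the bookkeeping is lighter and the resulting bound is the same (in fact both approaches would give better powers of $\log N$ than the stated lemma requires). The core mechanisms are identical: on your region $A$, the improved $L^k$-moment bound $\int |F(z_{i_0}+z_{j_0})|^k d\theta_{i_0} \ll X^{-(k-1)}$ combined with H\"{o}lder; on your region $B$, the forced concentration $\theta_{j_0} \in (\pi N - X, \pi N]$ (giving $|F(z_{j_0})| \ll 1/N$ and measure $\ll X$).

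One loose spot worth flagging: in the $B$-analysis, when you write that the ``$(\theta_{i_0},\theta_{j_0})$ double integral yields $O(X\log N/N)$'' and then handle the other $2k-3$ integrations separately, you have implicitly dropped the cross factors $|F(z_{i_0}+z_j)|$ for $j\ne j_0$ (which depend on $\theta_{i_0}$) and $|F(z_i+z_{j_0})|$ for $i\ne i_0$ (which depend on $\theta_{j_0}$). These cannot be ``handled by the remaining integrations'' since they live in the $\theta_{i_0}$- and $\theta_{j_0}$-integrals. The fix is to apply H\"{o}lder to the full $\theta_{i_0}$-integral $\int_B \prod_{j} |F(z_{i_0}+z_j)|\,d\theta_{i_0}$, which yields $\ll_k 1$ (not $\log N$), and to integrate the $\theta_i$ ($i\ne i_0$) likewise before touching $\theta_{j_0}$; the $(X/N)$ saving then comes cleanly from the $\theta_{j_0}$-window. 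After this repair the argument goes through and even gives the sharper exponent $(\log N)^{k-1}$ in the second term. Also, note that $|z_{i_0}+z_{j_0}|>X$ only forces $|\theta_{i_0}+\theta_{j_0}|>\sqrt{X^2-(c_{i_0}+c_{j_0})^2}$ rather than $>X$; this is a harmless constant adjustment but should be stated.
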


\begin{proof}
    Let $\Omega := \{2,\dots,k\} \times \{k+1,\dots,2k\}$ with $|\Omega| = k(k-1)$. We say a $k$-partition $\Lambda_1 \sqcup \Lambda_2 \sqcup \cdots \sqcup \Lambda_k$ of $\Omega$ is \textit{weak} if we allow $\Lambda_i = \emptyset$. For every weak 3-partition $\Lambda = (\Lambda_1,\Lambda_2,\Lambda_3)$ of $\Omega$, define the region $R_{\Lambda}(X)$ to be the set 
    {\small 
    \begin{align*}
    \left\{(\theta_2, \dots, \theta_{2k}) \in [-\pi N, \pi N]^{2k-1} \Bigg| \begin{array}{cc}
        |z_i + z_j| \leq X & \forall\ (i,j) \in \Lambda_1, \\
        |z_i + z_j - 2\pi i N| \leq X \textnormal{ or } |z_i + z_j + 2\pi i N| \leq X & \forall\ (i,j) \in \Lambda_2, \\
        |z_i + z_j| > X \textnormal{ and } |z_i + z_j \pm 2\pi i N| > X & \forall\ (i,j) \in \Lambda_3
    \end{array}\right\}.
    \end{align*}}
    It is easy to see that $[-\pi N, \pi N]^{2k-1}$ is partitioned by $\{R_\Lambda\ |\ \Lambda \textnormal{ a weak 3-partition of }\Omega\}$ and that $\mathfrak{S}(X) = R_{(\Omega, \emptyset, \emptyset)}(X)$. We further define two projection maps $\pi_i, \ i=1,2$ which respectively return the first and second components of a pair of integers, that is, $\pi_1(a,b) = a$ and $\pi_2(a,b) = b$ for $a,b \in \Z$.  
    
    It now suffices to prove that
    \begin{equation} \label{eq:I(R)}
    I(R_\Lambda(X)) \ll_k \frac{(\log N)^{2k-1-|\pi_2(\Lambda_2)|}}{X^{|\Lambda_3|(1-1/k)}} \Big(\frac{X}{N}\Big)^{|\pi_2(\Lambda_2)|},
    \end{equation}
    because if $\Lambda_2 \neq \emptyset$, then the RHS of the above expression is $\ll_k (\log N)^{2k-2} X/N$. If, on the other hand, $\Lambda_3 \neq \emptyset$, then the RHS is $\ll_k {(\log N)^{2k-1}}/{X^{1-1/k}}$, and, since $\mathfrak{S}(X)=R_{(\Omega, \emptyset, \emptyset)}(X)$, at least one of these two cases must occur. 
    Now, fix a weak partition $\Lambda$ of $\Omega$ and write $R_\Lambda := R_\Lambda(X)$. For $(\theta_2,\dots,\theta_{2k}) \in R_\Lambda$, notice that
    $$
    \prod_{i = 2}^{k} \prod_{j = k+1}^{2k} |F(z_i + z_j)| \ll_k \frac{1}{X^{|\Lambda_3|}}
    $$
    by assumption and \cref{lem:F_bound}. Hence, 
    $$
    I(R_\Lambda) \ll_k \frac{1}{X^{|\Lambda_3|(1-1/k)}} \underset{R_\Lambda}{\int \cdots \int} \prod_{i = 2}^{k} \prod_{j = k+1}^{2k} |F(z_i + z_j)|^{\frac{1}{k}}\ \prod_{j = k+1}^{2k} |F(z_j)|\ d\theta_2 \cdots d\theta_{2k}.
    $$
    Further note that $R_\Lambda \subset [-\pi N, \pi N]^{k-1} \times \tilde{R}_\Lambda$ where
    $$
    \tilde{R}_\Lambda := \{ (\theta_{k+1},\dots,\theta_{2k}) \in [-\pi N, \pi N]^{k}\ \big|\  \pi N - X \leq |\theta_j| \leq \pi N\; \forall\ j \in \pi_2(\Lambda_2)  \}.
    $$
    This inclusion holds because if we had $|\theta_j| < \pi N - X$ for $(i,j) \in \Lambda_2$, then
    $$
    |z_i + z_j \pm 2 \pi i N| \geq \left| |\theta_i + \theta_j| - 2 \pi N \right| \geq 2\pi N - |\theta_i| - |\theta_j| > 2 \pi N - \pi N + X - \pi N = X.
    $$
    Consequently, the above gives
    $$
    I(R_\Lambda) \ll_k \frac{1}{X^{|\Lambda_3|(1-1/k)}} \underset{[-\pi N, \pi N]^{k-1} \times \tilde{R}_\Lambda}{\int \cdots \int} \prod_{i = 2}^{k} \prod_{j = k+1}^{2k} |F(z_i + z_j)|^{\frac{1}{k}}\ \prod_{j = k+1}^{2k} |F(z_j)|\ d\theta_2 \cdots d\theta_{2k}.
    $$
    By Hölder's inequality,
    $$
    \prod_{i = 2}^{k} \int_{- \pi N}^{\pi N} \prod_{j = k+1}^{2k} |F(z_i + z_j)|^{\frac{1}{k}} d\theta_i \leq \prod_{i = 2}^{k} \prod_{j = k+1}^{2k} \Big(\underbrace{\int_{- \pi N}^{\pi N} |F(z_i + z_j)| d\theta_i}_{\ll \log N} \Big)^{\frac{1}{k}} \ll (\log N)^{k-1}.
    $$
    Finally, we obtain
    \begin{align*}
    I(R_\Lambda) &\ll_k \frac{(\log N)^{k-1}}{X^{|\Lambda_3|(1-1/k)}} \underset{\tilde{R}_\Lambda}{\int \cdots \int} \prod_{j = k+1}^{2k} |F(z_j)| d \theta_{k+1} \cdots d \theta_{2k} \\
    &\ll_k \frac{(\log N)^{k-1}}{X^{|\Lambda_3|(1-1/k)}} \prod_{j \notin \pi_2(\Lambda_2)} \underbrace{\int_{- \pi N}^{\pi N} |F(z_i + z_j)| d\theta_i}_{\ll \log N}   \prod_{j \in \pi_2(\Lambda_2)}  \Big(\int_{-\pi N}^{-\pi N + X} + \int_{\pi N - X}^{\pi N}\Big) \underbrace{|F(z_j)|}_{\ll \frac{1}{N}} d \theta_j  \\
    &\ll \frac{(\log N)^{2k-1-|\pi_2(\Lambda_2)|}}{X^{|\Lambda_3|(1-1/k)}} \Big( \frac{X}{N}\Big)^{|\pi_2(\Lambda_2)|}.
    \end{align*}
    Thus, we have shown \eqref{eq:I(R)}. 
\end{proof}

Let $10X \leq Y \leq N$ be a parameter. Define the region $\mathfrak{S}(X,Y) \subseteq \mathfrak{S}(X)$ to be the set 
\begin{equation}
\left\{ (\theta_2,\dots,\theta_{2k}) \in [-\pi N, \pi N]^{2k-1}\ \bigg|\ \begin{array}{cc}
   |z_i + z_j| \leq X  &  \forall\ 2 \leq i \leq k, \ k+1 \leq j \leq 2k, \\
   |z_j| \leq Y  & \forall\ k+1 \leq j \leq 2k
\end{array} \right\}.
\label{eqn:Region-SXY}
\end{equation}

\begin{lemma}{\label{lem:I(R_2)}}
    We have
    $$
    I(\mathfrak{S}(X) \setminus \mathfrak{S}(X,Y)) \ll_k \Big( \frac{X}{Y} \Big)^{k-1}.
    $$
\end{lemma}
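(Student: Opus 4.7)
The plan is to cover $\mathfrak{S}(X)\setminus\mathfrak{S}(X,Y) \subseteq \bigcup_{j_0=k+1}^{2k} A_{j_0}$, where
\[
A_{j_0} := \mathfrak{S}(X) \cap \{(\theta_2,\dots,\theta_{2k}) : |z_{j_0}|>Y\},
\]
and to prove $I(A_{j_0}) \ll_k (X/Y)^{k-1}$ uniformly in $j_0$; summing over the $k$ choices of $j_0$ then yields the claim. The key geometric observation on $A_{j_0}$ is a triangle-inequality cascade through the $\mathfrak{S}(X)$ constraints: $|z_i+z_{j_0}| \leq X$ forces $|z_i| \geq Y-X$ for every $i$, which then forces $|z_j| \geq Y-2X \geq 4Y/5$ for every $j \neq j_0$ via $|z_i+z_j| \leq X$, using the hypothesis $Y \geq 10X$.

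The central step is the measure-preserving change of variables
\[
\alpha_i := \theta_i + \theta_{j_0}\ (i=2,\dots,k), \qquad \beta_j := \theta_j - \theta_{j_0}\ (j\in\{k+1,\dots,2k\}\setminus\{j_0\}),
\]
which decouples $\theta_{j_0}$ from the small fluctuations. The $\mathfrak{S}(X)$ constraints translate to $|\alpha_i|, |\alpha_i+\beta_j| \ll X$ (so in particular $|\beta_j| \leq 2X$). Applying \cref{lem:F_bound} to each factor and using $|\theta_{j_0}|>Y-1 \gg X$ to conclude $|\theta_j| \geq |\theta_{j_0}|/2$ for every $j$, the integrand would be dominated by
\[
\frac{1}{|\theta_{j_0}|^k} \prod_{i=2}^{k} \frac{1}{1+|\alpha_i|} \prod_{\substack{2\leq i\leq k \\ j \neq j_0}} \frac{1}{1+|\alpha_i+\beta_j|}.
\]

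I would then integrate in the order $\beta$, $\alpha$, $\theta_{j_0}$. For each $j \neq j_0$, H\"older's inequality with exponent $k-1$ yields $\int_\R \prod_i (1+|\alpha_i+\beta_j|)^{-1} d\beta_j \ll_k 1$ when $k \geq 3$, while a direct estimate handles $k=2$ with a loss of $\log X$. Next, $\int_{-X}^{X} (1+|\alpha_i|)^{-1} d\alpha_i \ll \log X$ contributes $(\log X)^{k-1}$. Finally, $\int_{|\theta_{j_0}|>Y-1} |\theta_{j_0}|^{-k} d\theta_{j_0} \ll_k Y^{-(k-1)}$, which is convergent since $k \geq 2$. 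Multiplying and invoking $\log X \leq X$ for $X \geq 2$ gives $I(A_{j_0}) \ll_k (\log X)^{O(k)}/Y^{k-1} \ll_k (X/Y)^{k-1}$.

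The main technical subtlety will be the boundary case $k=2$, where H\"older's inequality degenerates and one has to verify by hand that the resulting logarithmic losses fit inside the target bound. Some additional bookkeeping is also needed to confirm that after the change of variables every argument of $F$ has imaginary part of magnitude at most $\pi N$, so that the first case of \cref{lem:F_bound} applies uniformly.
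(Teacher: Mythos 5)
Your proof is correct, and it takes a genuinely different route from the paper's. Both arguments begin with the same covering of $\mathfrak{S}(X)\setminus\mathfrak{S}(X,Y)$ by the $k$ sets where $|z_{j_0}|>Y$, but the mechanisms for bounding $I$ on each piece differ. The paper discards the $\mathfrak{S}(X)$ constraints on $\theta_2,\dots,\theta_k$ entirely, showing $\mathfrak{T}_l\subset[-\pi N,\pi N]^{k-1}\times\tilde{\mathfrak{T}}_l$, and then integrates out $\theta_2,\dots,\theta_k$ by the H\"older argument of \cref{lem:bounding_integral_trivial_estimate} to get an $O_k(1)$ factor; the saving $X^{k-1}/Y^{k-1}$ comes solely from the short intervals $|\theta_j-\theta_l|\leq 2X$ (contributing a length $\ll X$ each) and the bound $|F(z_j)|\ll 1/|\theta_l|$ for all $j$. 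Your argument instead keeps all the $\mathfrak{S}(X)$ constraints alive via the unimodular change of variables $(\alpha_i,\beta_j,\theta_{j_0})$, exploits the actual decay $|F(z_i+z_j)|\ll(1+|\alpha_i+\beta_j|)^{-1}$ and $|F(z_i+z_{j_0})|\ll(1+|\alpha_i|)^{-1}$, and integrates in nested order. This buys a sharper intermediate bound of $(\log X)^{O_k(1)}/Y^{k-1}$ in place of $X^{k-1}/Y^{k-1}$, at the cost of the extra bookkeeping with $\alpha,\beta$ and the boundary case $k=2$ that you flag (where, as you note, the isolated $\log X$ losses still sit comfortably inside $X/Y$ since $(\log X)^2\leq X$ for $X\geq 2$). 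Your concern about $|\Im(\cdot)|\leq\pi N$ is automatically resolved on $\mathfrak{S}(X)$: there $|z_i+z_j|\leq X\leq N/10<\pi N$ and $|\theta_j|\leq\pi N$ always, so the first case of \cref{lem:F_bound} applies throughout. In short, a valid alternative that is slightly stronger than needed but requires a bit more care; the paper's route is blunter but avoids the change of variables and the $k=2$ special case.
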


\begin{proof}
For $l \in \{ k+1,\dots,2k\}$, we define
$$
\mathfrak{T}_l(X,Y) := \left\{(\theta_2,\dots,\theta_{2k}) \in [-\pi N, \pi N]^{2k-1}\ \bigg|\ 
\begin{array}{cc}
    |z_i + z_j| \leq X & \forall\ 2 \leq i \leq k, \ k+1 \leq j \leq 2k, \\
     |z_l | > Y & 
\end{array}\right\}.
$$
We then have $\mathfrak{S}(X) \setminus \mathfrak{S}(X,Y) \subset \bigcup_{l = k+1}^{2k} \mathfrak{T}_l(X,Y)$, and it suffices to prove
$$
I(\mathfrak{T}_l(X,Y)) \ll_k \Big( \frac{X}{Y} \Big)^{k-1} \quad \forall\ l = k+1,\dots,2k.
$$
We claim that 
$$
\mathfrak{T}_l(X,Y) \subset [-\pi N, \pi N]^{k-1} \times \tilde{\mathfrak{T}_l}(X,Y),
$$
where
$$
\tilde{\mathfrak{T}}_l(X,Y) := \left\{(\theta_2,\dots,\theta_{2k}) \in [-\pi N, \pi N]^{2k-1}\ \bigg|\ 
\begin{array}{cc}
    |\theta_j - \theta_l| \leq 2X & \forall\ k+1 \leq j \leq 2k, \\
     |\theta_l| \geq Y/2 & 
\end{array}\right\}.
$$
To prove this, let $(\theta_2,\dots,\theta_{2k}) \in \mathfrak{T}_l(X,Y)$. As $|z_l| > Y$, it follows that 
$$
|\theta_l| \geq Y - |c_l| \geq Y - \frac{3}{4} \geq Y - \frac{X}{2} \geq \frac{Y}{2}.$$ By the triangle inequality, we also have
$$
|\theta_j - \theta_l| \leq |z_j - z_l| \leq |z_j + z_2| + |z_2 + z_l| \leq 2X. 
$$
This proves the claim. Therefore,
{\small
\begin{align*}
    I(\mathfrak{T}_l(X,Y)) &\leq I([-\pi N, \pi N]^{k-1}\times \tilde{\mathfrak{T}}_l(X,Y)) \\
    &= \underset{\tilde{\mathfrak{T}}_l(X,Y)}{\int \cdots \int} \Big( \underbrace{\int_{-\pi N}^{\pi N} \cdots \int_{-\pi N}^{\pi N} \prod_{i = 2}^k \prod_{j = k+1}^{2k} |F(z_i + z_j)| d \theta_i}_{\ll_k 1 \textnormal{ by Hölder as in (\cref{lem:bounding_integral_trivial_estimate})}} \Big)  \prod_{j = k+1}^{2k} |F(z_j)|\ d \theta_{k+1} \cdots d \theta_{2k} \\
    &\ll_k \underset{\tilde{\mathfrak{T}}_l(X,Y)}{\int \cdots \int} \prod_{j = k+1}^{2k} |F(z_j)|\ d \theta_{k+1} \cdots d \theta_{2k} \\
    &\ll_k \int_{\frac{Y}{2} \leq |\theta_l| \leq \pi N} \bigg( \prod_{\substack{j = k+1 \\ j \neq l}}^{2k} \int_{|\theta_j - \theta_l| \leq 2X} |F(z_j)|\ d \theta_j \bigg) |F(z_l)|\ d \theta_l.
\end{align*}}
Note that for $j \neq l$, $|F(z_j)| \ll |\theta_j|^{-1} \ll |\theta_l|^{-1}$ where the first bound follows from Lemma \ref{lem:F_bound} and the second from
$$
|\theta_j| \geq |\theta_l| - |\theta_j - \theta_l|
$$
and
$$
|\theta_j - \theta_l| \leq 2X \leq \frac{Y}{5} \leq \frac{|\theta_l|}{2}.
$$
Furthermore, the length of the interval of integration for each $j$ is $O(X)$. Thus, 
\begin{align*}
     I(\mathfrak{T}_l(X,Y)) &\ll_k \int_{|\theta_l| \geq \frac{Y}{2}} \bigg( \underbrace{|F(z_l)|}_{\ll \frac{1}{|\theta_l|}} \prod_{\substack{j = k+1 \\ j \neq l}}^{2k} \frac{X}{|\theta_l|} d \theta_l \bigg) \\
     &\ll_k \int_{|\theta_l| \geq \frac{Y}{2}} \frac{X^{k-1}}{|\theta_l|^{k}} d\theta_l \ll_k \Big( \frac{X}{Y} \Big)^{k-1}.
\end{align*}
This proves the lemma.

\end{proof}

For the remaining region $\mathfrak{S}(X,Y)$, we estimate trivially
$$
I(\mathfrak{S}(X,Y)) \ll_k (\log N)^k.
$$
Here, our savings will come from the fact that the difference
$$B\left(q^{-1/2}, q^{-1/2}e^{(c_2+i\theta_2)/N}, \dots, q^{-1/2}e^{(c_{2k}+i\theta_{2k})/N}\right) - b_k(q) $$
is small on this region, where all of the exponents $z_i+z_j$ and $z_j$ are small. The next lemma makes this precise.

\begin{lemma} \label{lem:bound_on_diff_of_b_terms} We have
$$\Delta([-\pi N, \pi N]^{2k-1}) \ll_{k} \frac{1}{q}.$$
and
$$\Delta(\mathfrak{S}(X,Y)) \ll_{k} \frac{Y}{qN}.$$
\end{lemma}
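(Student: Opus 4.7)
The plan is to apply the Lipschitz-type bound from \cref{prop:convergence_of_B} directly, with $w = (q^{-1/2}, q^{-1/2}, \dots, q^{-1/2})$ (so $B(w) = b_k(q)$) and $z = \left(q^{-1/2}, q^{-1/2}e^{(c_2+i\theta_2)/N}, \dots, q^{-1/2}e^{(c_{2k}+i\theta_{2k})/N}\right)$. The first step is to verify that both $z$ and $w$ lie in the region $\mathcal{R}$ of \cref{prop:convergence_of_B}. Since $c_j \in (1/4, 3/4)$ and $N \geq 20$, we have
$$|q^{-1/2} e^{(c_j+i\theta_j)/N}| = q^{-1/2} e^{c_j/N} \leq q^{-1/2} e^{3/80} < 1.1\, q^{-1/2},$$
so both points are in $\mathcal{R}$ and the conclusion $|B(z) - B(w)| \ll_k \tfrac{1}{\sqrt{q}}\|z-w\|_\infty$ applies.

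For the first estimate, I just need a trivial bound on $\|z-w\|_\infty$. For any $j$,
$$|q^{-1/2} e^{(c_j+i\theta_j)/N} - q^{-1/2}| = q^{-1/2}|e^{(c_j+i\theta_j)/N} - 1| \ll q^{-1/2},$$
since $e^{(c_j + i \theta_j)/N}$ ranges over a bounded subset of $\mathbb{C}$ uniformly on $[-\pi N, \pi N]^{2k-1}$ (for example, by noting $|e^{(c_j+i\theta_j)/N}| \leq e^{3/80}$ and using the reverse triangle inequality, or simply by the fact that both terms are bounded). Combining with the Lipschitz bound yields $|B(z) - b_k(q)| \ll_k q^{-1/2} \cdot q^{-1/2} = 1/q$, which gives the first claim after taking the supremum.

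For the second estimate, the key observation is that on $\mathfrak{S}(X,Y)$ the variables $\theta_i$ and $\theta_j$ are small. We have $|z_j| \leq Y$ directly, and for $i \in \{2,\dots,k\}$ the triangle inequality combined with $|z_i + z_j| \leq X$ gives $|z_i| \leq X + Y \leq 2Y$ (using $10X \leq Y$). Hence $|(c_\ell + i\theta_\ell)/N| \ll Y/N \leq 1$ for every $\ell \in \{2,\dots,2k\}$, and so
$$|q^{-1/2} e^{(c_\ell+i\theta_\ell)/N} - q^{-1/2}| \ll q^{-1/2} \cdot \frac{Y}{N}$$
by the elementary estimate $|e^w - 1| \ll |w|$ for $|w|$ bounded. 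Taking the supremum norm and applying \cref{prop:convergence_of_B} yields
$$|B(z) - b_k(q)| \ll_k \frac{1}{\sqrt{q}} \cdot \frac{1}{\sqrt{q}} \cdot \frac{Y}{N} = \frac{Y}{qN},$$
which gives the second claim. There is no real obstacle here beyond carefully tracking which estimates to feed into the Lipschitz bound of \cref{prop:convergence_of_B}; all the work was done in \cref{sec:analytic-B-convergence}.
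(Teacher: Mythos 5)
Your proof is correct and takes essentially the same approach as the paper: both invoke the Lipschitz bound $|B(z)-B(w)| \ll_k q^{-1/2}\|z-w\|_\infty$ from \cref{prop:convergence_of_B}, use the trivial bound $|e^{(c_j+i\theta_j)/N}-1| \ll 1$ for the first estimate, and for the second first observe that on $\mathfrak{S}(X,Y)$ all coordinates $|z_\ell|$ are $\ll Y$ and then bound $|e^{(c_\ell+i\theta_\ell)/N}-1| \ll Y/N$. The only cosmetic differences are that you verify membership in $\mathcal{R}$ explicitly (the paper leaves this implicit) and you apply $|e^w-1|\ll |w|$ in one step rather than splitting the exponential into real and imaginary parts as the paper does.
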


\begin{proof} 
Note that the first bound is an immediate consequence of \cref{prop:convergence_of_B} and
$$
\max_{-\pi N \leq \theta_i \leq N \pi}|e^{c_i/N + i \theta_i/N}-1| \ll 1.  
$$

If we have $(\theta_2,\dots,\theta_{2k}) \in \mathfrak{S}(X,Y)$, then $|\theta_i| \leq 2Y$ for all $i=2,\dots,2k$. Thus, by \cref{prop:convergence_of_B}, it again suffices to show that
$$
\max_{(\theta_2,\dots,\theta_{2k}) \in \mathfrak{S}(X,Y)} ||(q^{-1/2}e^{c_2/N + i\theta_2/N}, \dots, q^{-1/2}e^{c_{2k}/N + i\theta_{2k}/N})-(q^{-1/2},\dots,q^{-1/2})||_\infty \ll \frac{Y}{\sqrt{q} N}.
$$

Using $|\theta_i| \leq 2Y$, we can simplify this statement to
\begin{equation}\label{equation_2Y/N}
\max_{-2Y \leq \theta_i \leq 2Y}|e^{c_i/N + i \theta_i/N}-1| \ll \frac{Y}{N}  
\end{equation}
for each $i$.
First, note that
$$
\lim_{x \rightarrow 0} \frac{e^{c_i x}-1}{c_i x} = 1.
$$
This implies
$$
|e^{c_i/N}-1| \ll_{c_i} \frac{1}{N}.
$$
Furthermore, for any $\theta \in \R$, we have
$$
|e^{i\theta}-1| \leq |\theta|.
$$
Putting those two inequalities together, we conclude
\begin{align*}
|e^{c_i/N + i\theta_i/N}-1| &= |e^{c_i/N + i\theta_i/N}-e^{i\theta_i/N}+e^{i\theta_i/N}-1| \leq |e^{c_i/N}-1| + |e^{i\theta_i/N}-1| \\ &\ll_{c_i} \frac{1}{N} ( 1 + |\theta|).
\end{align*}
We now take the maximum over $-2Y \leq \theta \leq 2Y$ to establish \eqref{equation_2Y/N}.
\end{proof}

Thus, we obtain the desired bound on the integral $\widetilde{J_k}$:

\begin{align*}
|\widetilde{J_k}| &\ll_k \frac{1}{q} \frac{(\log N)^{2k-1}}{X^{1-1/k}} + \frac{1}{q} (\log N)^{2k-2} \frac{X}{N} + \frac{1}{q} \left( \frac{X}{Y} \right)^{k-1} +   \frac{Y}{q N} \left( \log N \right)^k.
\end{align*}
The first two terms are a consequence of estimating $I(R_1)$ using \cref{lem:I(R_1)} and taking the trivial $1/q$ estimate for $\Delta(R_1)$. The third term makes use of \cref{lem:I(R_2)} to bound $I(R_2)$ and uses the same estimate for $\Delta(R_2)$. Lastly, the fourth term comes from upper bounding $I(R_3)$ by $I([-\pi N, \pi N]^{2k-1})$ and using \cref{lem:bound_on_diff_of_b_terms} to bound $\Delta(R_3)$. This completes the proof of \cref{prop:error_term_estimation} and thus the proof of \cref{thm:steinhaus}. 

\section*{Acknowledgements}

The authors were participants in the Fields Undergraduate Summer Research Program 2023 and are deeply grateful to the Fields Institute for its support. We are thankful to Ofir Gorodetsky for many insightful mathematical and historical remarks on an earlier version of the paper, which improved the exposition and noted our proof gave better $q$-dependence in \cref{thm:steinhaus} than we had initially stated.  We also thank Andrew Granville and Winston Heap for helpful comments and their encouragement.

\bibliographystyle{alpha}
\bibliography{references}

\newcommand{\etalchar}[1]{$^{#1}$}
\begin{thebibliography}{CSWZ98}

\bibitem[ACZ96]{AyyadCochraneZheng-1996}
Anwar Ayyad, Todd Cochrane, and Zhiyong Zheng.
\newblock The congruence {$x_1x_2\equiv x_3x_4\pmod p$}, the equation {$x_1x_2=x_3x_4$}, and mean values of character sums.
\newblock {\em J. Number Theory}, 59(2):398--413, 1996.

\bibitem[ASV{\etalchar{+}}22]{AggarwalSubediVerreaultZamanZheng-2022a}
Daksh Aggarwal, Unique Subedi, William Verreault, Asif Zaman, and Chenghui Zheng.
\newblock Sums of random multiplicative functions over function fields with few irreducible factors.
\newblock {\em Math. Proc. Cambridge Philos. Soc.}, 173(3):715--726, 2022.

\bibitem[BS16]{BondarenkoSeip-2016}
Andriy Bondarenko and Kristian Seip.
\newblock Helson's problem for sums of a random multiplicative function.
\newblock {\em Mathematika}, 62(1):101--110, 2016.

\bibitem[Cai23]{Caich-2023}
Rachid Caich.
\newblock Almost sure upper bound for random multiplicative functions.
\newblock {\em arXiv preprint arXiv:2304.00943}, 2023.

\bibitem[CG06]{ConreyGamburd-2006}
Brian Conrey and Alex Gamburd.
\newblock Pseudomoments of the {R}iemann zeta-function and pseudomagic squares.
\newblock {\em J. Number Theory}, 117(2):263--278, 2006.

\bibitem[CS12]{ChatterjeeSound-2012}
Sourav Chatterjee and Kannan Soundararajan.
\newblock Random multiplicative functions in short intervals.
\newblock {\em Int. Math. Res. Not. IMRN}, (3):479--492, 2012.

\bibitem[CSWZ98]{CORTEEL1998186}
Sylvie Corteel, Carla~D. Savage, Herbert~S. Wilf, and Doron Zeilberger.
\newblock A pentagonal number sieve.
\newblock {\em J. Combin. Theory Ser. A}, 82(2):186--192, 1998.

\bibitem[Ehr77]{Ehrhart-1977}
E.~Ehrhart.
\newblock {\em Polyn\^omes arithm\'etiques et m\'ethode des poly\`edres en combinatoire}, volume Vol. 35 of {\em International Series of Numerical Mathematics}.
\newblock Birkh\"auser Verlag, Basel-Stuttgart, 1977.

\bibitem[GHS15]{GranvilleHarperSoundararajan-2015}
Andrew Granville, Adam~J. Harper, and Kannan Soundararajan.
\newblock Mean values of multiplicative functions over function fields.
\newblock {\em Res. Number Theory}, 1:Paper No. 25, 18, 2015.

\bibitem[Gor24]{Gorodetsky-2024}
Ofir Gorodetsky.
\newblock Magic squares, the symmetric group and {M}\"obius randomness.
\newblock {\em Monatsh. Math.}, 204(1):27--46, 2024.

\bibitem[GS01]{GranvilleSound}
Andrew Granville and K.~Soundararajan.
\newblock Large character sums.
\newblock {\em J. Amer. Math. Soc.}, 14(2):365--397, 2001.

\bibitem[GW24a]{GorodetskyWong-2024b}
Ofir Gorodetsky and Mo~Dick Wong.
\newblock Martingale central limit theorem for random multiplicative functions.
\newblock {\em arXiv preprint arXiv:2405.20311}, 2024.

\bibitem[GW24b]{GorodetskyWong-2024a}
Ofir Gorodetsky and Mo~Dick Wong.
\newblock A short proof of {H}elson's conjecture.
\newblock {\em arXiv preprint arXiv:2405.19151}, 2024.

\bibitem[Har13]{Harper-2013}
Adam~J. Harper.
\newblock On the limit distributions of some sums of a random multiplicative function.
\newblock {\em J. Reine Angew. Math.}, 678:95--124, 2013.

\bibitem[Har19]{Harper-2019a}
Adam~J. Harper.
\newblock Moments of random multiplicative functions, {II}: {H}igh moments.
\newblock {\em Algebra Number Theory}, 13(10):2277--2321, 2019.

\bibitem[Har20]{Harper-2020}
Adam~J. Harper.
\newblock Moments of random multiplicative functions, {I}: {L}ow moments, better than squareroot cancellation, and critical multiplicative chaos.
\newblock {\em Forum Math. Pi}, 8:e1, 95, 2020.

\bibitem[Har23a]{Hardy-2024a}
Seth Hardy.
\newblock Almost sure bounds for a weighted {S}teinhaus random multiplicative function.
\newblock {\em arXiv preprint arXiv:2307.00499}, 2023.

\bibitem[Har23b]{Harper-2023}
Adam~J. Harper.
\newblock Almost sure large fluctuations of random multiplicative functions.
\newblock {\em Int. Math. Res. Not. IMRN}, (3):2095--2138, 2023.

\bibitem[Har24]{Hardy-2024b}
Seth Hardy.
\newblock Bounds for exponential sums with random multiplicative coefficients.
\newblock {\em arXiv preprint arXiv:2401.16256}, 2024.

\bibitem[HJL16]{LiebProbabilityEstimates}
Uwe Helmke, Jens Jordan, and Julia Lieb.
\newblock Probability estimates for reachability of linear systems defined over finite fields.
\newblock {\em Adv. Math. Commun.}, 10(1):63--78, 2016.

\bibitem[HL16]{HeapLindqvist-2016}
Winston~P. Heap and Sofia Lindqvist.
\newblock Moments of random multiplicative functions and truncated characteristic polynomials.
\newblock {\em Q. J. Math.}, 67(4):683--714, 2016.

\bibitem[HNR15]{HarperNikeghbaliRadziwi-2015}
Adam~J. Harper, Ashkan Nikeghbali, and Maksym Radziwi\l\l.
\newblock A note on {H}elson's conjecture on moments of random multiplicative functions.
\newblock In {\em Analytic number theory}, pages 145--169. Springer, Cham, 2015.

\bibitem[Hou11]{Hough-2011}
Bob Hough.
\newblock Summation of a random multiplicative function on numbers having few prime factors.
\newblock {\em Math. Proc. Cambridge Philos. Soc.}, 150(2):193--214, 2011.

\bibitem[KSX23]{Klurman-Shkredov-Xu-2023}
Oleksiy Klurman, Ilya~D. Shkredov, and Max~Wenqiang Xu.
\newblock On the random {C}howla conjecture.
\newblock {\em Geom. Funct. Anal.}, 33(3):749--777, 2023.

\bibitem[LTW13]{Lau-Tenenbaum-Wu-2012}
Yuk-Kam Lau, G\'erald Tenenbaum, and Jie Wu.
\newblock On mean values of random multiplicative functions.
\newblock {\em Proc. Amer. Math. Soc.}, 141(2):409--420, 2013.

\bibitem[Mas22]{Mastrostefano-2022}
Daniele Mastrostefano.
\newblock An almost sure upper bound for random multiplicative functions on integers with a large prime factor.
\newblock {\em Electron. J. Probab.}, 27:Paper No. 32, 21, 2022.

\bibitem[NPS23]{NajnudelPaquetteSimm-2023}
Joseph Najnudel, Elliot Paquette, and Nick Simm.
\newblock Secular coefficients and the holomorphic multiplicative chaos.
\newblock {\em The Annals of Probability}, 51(4), July 2023.

\bibitem[PWX24]{Pandey-Wang-Xu-2024}
Mayank Pandey, Victor~Y. Wang, and Max~Wenqiang Xu.
\newblock Partial sums of typical multiplicative functions over short moving intervals.
\newblock {\em Algebra Number Theory}, 18(2):389--408, 2024.

\bibitem[Shi08]{Shi-2008}
Sanying Shi.
\newblock On the equation {$n_1n_2=n_3n_4$} and mean value of character sums.
\newblock {\em J. Number Theory}, 128(2):313--321, 2008.

\bibitem[SX23]{SoundXu-2023}
Kannan Soundararajan and Max~Wenqiang Xu.
\newblock Central limit theorems for random multiplicative functions.
\newblock {\em J. Anal. Math.}, 151(1):343--374, 2023.

\bibitem[VW95]{VaughanWooley-1995}
Robert~C. Vaughan and Trevor~D. Wooley.
\newblock On a certain nonary cubic form and related equations.
\newblock {\em Duke Math. J.}, 80(3):669--735, 1995.

\bibitem[Win44]{Wintner-1944}
Aurel Wintner.
\newblock Random factorizations and {R}iemann's hypothesis.
\newblock {\em Duke Math. J.}, 11:267--275, 1944.

\bibitem[Xu24]{Xu-2024}
Max~Wenqiang Xu.
\newblock Better than square-root cancellation for random multiplicative functions.
\newblock {\em Trans. Amer. Math. Soc. Ser. B}, 11:482--507, 2024.

\end{thebibliography}

\appendix

\end{document}